\providecommand{\algorithmname}{Algorithm}
\theoremstyle{plain}
\newtheorem{thm}{\protect\theoremname}
  \theoremstyle{definition}
  \newtheorem{defn}[thm]{\protect\definitionname}
  \theoremstyle{remark}
  \newtheorem{rem}[thm]{\protect\remarkname}
  \theoremstyle{plain}
  \newtheorem{lem}[thm]{\protect\lemmaname}
\renewcommand{\vec}{\boldsymbol}
\newcommand{\tp}{'}
  \providecommand{\definitionname}{Definition}
  \providecommand{\lemmaname}{Lemma}
  \providecommand{\remarkname}{Remark}
\providecommand{\theoremname}{Theorem}
\begin{document}
\title[Stochastic Approximation for Quasi-Stationary Distributions in Finite Dimensions]{Theoretical Analysis of a Stochastic Approximation Approach for Computing Quasi-Stationary Distributions}

\author{Jose Blanchet, Peter Glynn, Shuheng Zheng}
\begin{abstract}
This paper studies a method, which has been proposed in the Physics
literature by \cite{latestoliveira,oliveiradickman1,dickman}, for
estimating the quasi-stationary distribution. In contrast to existing
methods in eigenvector estimation, the method eliminates the need
for explicit transition matrix manipulation to extract the principal
eigenvector. Our paper analyzes the algorithm by casting it as a stochastic
approximation algorithm (Robbins-Monro) \cite{robbins1951stochastic,kushner2003stochastic}.
In doing so, we prove its convergence and obtain its rate of convergence.
Based on this insight, we also give an example where the rate of convergence
is very slow. This problem can be alleviated by using an improved
version of the algorithm that is given in this paper. Numerical experiments
are described that demonstrate the effectiveness of this improved
method.
\end{abstract}
\maketitle

\section{Introduction}

The motivation for this algorithm came from physicists' need to estimate
the quasi-stationary distribution of interacting particle systems
(IPS) \cite{latestoliveira,oliveiradickman1,dickman,liggett_ips}.
A quasi-stationary distribution can be computed via the left principal
eigenvector of the substochastic transition matrix over the non-absorbing
states (transition rate matrix in continuous-time). However, the eigenvalue
problem suffers from the curse of dimensionality, and is especially
prohibitive in IPS where the state space is very large. 

A sampling based method has been proposed by these physicists based
on a heuristic manipulation of the Kolmogorov forward equation. The
validity of this method actually has been a priori proven by \cite{aldous1988two,athreya,pemantle}
who casted it as a generalized urn process. Results on rates of convergence
have been obtained. This result involves a Central Limit Theorem (CLT)
for a specific set of functionals corresponding to non-principal eigenvectors
of the underlying substochastic matrix. 

Our main contribution are as follows
\begin{enumerate}
\item Our paper recognizes the algorithm as a stochastic approximation algorithm
(Section \ref{sub:Formal-Description}).
\item This allows us to prove convergence and sufficient conditions for
a stronger CLT (Theorem \ref{thm:DTMC-unprojected}) that is not restricted
only to specific functionals.
\item More importantly, we recognized common scenarios (Section \ref{sub:Counter-example})
where the CLT fails and significantly hamper the performance of the
algorithm (i.e. very slow rate of convergence).
\item At the end, we came up with an improved algorithm (Section \ref{sub:Projection-Algorithm})
which exhibits a valid CLT under all scenarios by using projection
and iterate averaging \cite{polyak1992acceleration}.
\end{enumerate}
Section \ref{sec:Background} reviews some background material to
the contact process, quasi-stationary distributions, mentions a less
powerful method of proof via urn processes, and reviews the relevant
related literature on eigenvector estimations and points out their
shortcomings. Section \ref{sec:Motivation} explains the the basis
for the original heuristic and outlines the algorithm. Section \ref{sec:analysis}
goes over the stochastic approximation formulation and sketches the
proof of convergence (the full proof is given in the Appendix \ref{sub:Proof-of-DT}).
Section \ref{sec:Variations} gives an improved version of the algorithm
using projection along with its faster rate of convergence result.
Section \ref{sec:CTMC} briefly studies the algorithm adapted for
continuous-time Markov chains. Section \ref{sec:NumericalExp.} goes
over several important numerical experiments.

\section{Background and Related Literature\label{sec:Background}}

\subsection{Quasi-Stationary Distribution}

\subsubsection{Discrete-Time Version}

The paper \cite{darroch1965quasi} proposed the concepts of quasi-stationary
distribution and quasi-limiting distribution for discrete-time Markov
chains. Assume that $0$ is the absorbing state and $1,\ldots,n$
are non-absorbing, we can partition the Markov transition matrix as
\[
P=\left[\begin{array}{cc}
1 & 0\\
\vec{\alpha} & Q
\end{array}\right]
\]
where $Q$ is a n-by-n matrix. 

First we define the conditional transition probabilities
\begin{eqnarray*}
d_{j}^{\vec{\pi}}(n) & = & \mathbb{P}(X_{n}=j|X_{0}\sim\pi,X_{1},\ldots X_{n-1}\neq0)\\
 & = & \frac{\vec{\pi}^{\tp}Q^{n}\vec{e}_{j}}{\vec{\pi}^{\tp}Q^{n}\vec{e}}
\end{eqnarray*}
where $\{\vec{e}_{i}\}$ is the standard basis for $\mathbb{R}^{n}$,
$\vec{\pi}$ is a probability distribution, and $\vec{e}$ is the
vector of all 1's. $\vec{d}^{\vec{\pi}}(n)$ is the vector whose j-th
component is $d_{j}^{\vec{\pi}}(n)$. This leads to the following
definition.
\begin{defn}
If there is a distribution $\vec{\pi}$ over the transient states
such that $\vec{d}^{\vec{\pi}}(n)$ is independent of $n$, then we
call $\vec{d}^{\vec{\pi}}$ the quasi-stationary distribution.
\end{defn}
Under the assumption that the substochastic matrix $Q$ is irreducible
(though not necessarily aperiodic), it is straightforward to see that
the quasi-stationary distribution exists and is the unique solution
to principal eigenvector problem
\[
\vec{d}^{\tp}Q=\rho\vec{d}^{\tp}
\]
This existence and uniqueness (assuming that $\vec{d}$ is normalized
to be a probability vector) can be obtained by the Perron-Frobenius
theorem \cite{karlin_taylor}.

The paper \cite{meleard2011quasi} explores the existence of quasi-stationary
distribution for countable and general state space Markov chains where
$Q$ is replaced with the generator and $\vec{d}$ is a measure.

\subsubsection{Continuous-Time}

If we think about the transition rate matrix of a CTMC under similar
setup (irreducibility), then it (\cite{darroch1967ctmc}) can be said
that 
\[
d_{j}^{\vec{\pi}}(t)\rightarrow d_{j}+o(e^{t(\rho'-\rho_{1})})
\]
where $\vec{d}$ is the principal left-eigenvector of the rate matrix
corresponding to the transient states with associated eigenvalue $\rho_{1}$,
i.e.
\[
\vec{d}^{\tp}R=\rho_{1}\vec{d}^{\tp}
\]
where $R$ is the rate matrix of the CTMC.

\subsection{Linear Algebra Methods}

Classical linear algebra methods such as the power method \cite{golubvanloan}
suffers from the curse of dimensionality. Monte Carlo power methods
by \cite{dimov1998monte} can be adapted to produce eigenvectors but
requires an explicit computation of the substochastic transition matrix
on the fly and is expensive to do for interacting particle systems.
There exists a set of stochastic approximation methods for determining
principal eigenvalue/eigenvector where the matrix is random; however,
it too requires explicit matrix multiplication \cite{krasulina1970,krasulina1969,oja1985stochastic}
which is infeasible in interacting particle system case. 

Interacting particle systems such as the contact process (Section
\ref{sub:Contact-Process}) suffers heavily from the curse of dimensionality
and renders existing classical methods infeasible. This is an important
class of problems for physicists\cite{dickman,latestoliveira,marro,oliveiradickman1}
in the study of phase-transition property of certain non-equilibrium
systems. 

Lastly, a large number of adaptive algorithms has been designed for
estimating principal eigenvector of only covariance matrices (positive
semi-definite) where you observe an i.i.d. sequence of random vectors
with that particular covariance matrix \cite{chatterjee}.

\subsection{Fleming-Viot method}

The Fleming-Viot method \cite{meleard2011quasi,ferrari,burdzy} is
an interacting particle system that allows us to compute quasi-stationary
distributions of countable Markov chains and diffusion processes.
It consists of $N$ particles evolving independently according to
the dynamics of the Markov process. If one particle gets absorbed,
it is immediately restarted from a position uniformly picked from
the remaining $N-1$ particles. As both time and $N$ goes to infinity,
this would converge to the quasi-stationary distribution. When the
state space is large, you need $N$ to be large enough to have a good
approximation to the true quasi-stationary distribution. This would
be prohibitive in interacting particle systems. Furthermore, it is
generally computationally expensive to increase both the number of
particles $N$ and the time of the simulation.

\subsection{Urn Processes}

The algorithm which will be described below has been previously analyzed
as a generalized Polya's urn \cite{athreya,aldous1988two}. The overview
paper \cite{pemantle} is a comprehensive survey of urn processes.
However, the rate of convergence results of these urn processes are
not as strong as our result. They only offer a CLT along the non-principal
right eigenvectors of the rate matrix whereas we offer a CLT along
every direction in the space. The set of non-principal right eigenvectors
can never span the whole space (which in this case can be taken to
be the hyperplane orthogonal to $\vec{1}$) because the principal
left eigenvector $\bar{\vec{\mu}}$ is orthogonal to all the non-principal
right eigenvectors. Unless if $\bar{\vec{\mu}}$ is a multiple of
$\vec{1}$, we may only examine the CLT along the space orthogonal
to $\bar{\vec{\mu}}$ as opposed to the whole hyperplane that is orthogonal
to $\vec{1}$. In summary, our results are a strict extension of the
available corresponding results on urn processes. However, more importantly,
our approach is fundamentally different and builds on the well-studied
machinery of stochastic approximations and therefore allows us to
obtain significant algorithmic improvements that we shall explain
(Theorems \ref{thm:projection-thm} and \ref{thm:polyak-thm}) and
experimentally demonstrate (Section \ref{sec:NumericalExp.}).

\section{Heuristic Motivation\label{sec:Motivation}}

\subsection{Motivation from the Physics Literature}

This section reviews the heuristic origin of the algorithm from the
physics literature \cite{dickman,latestoliveira,oliveiradickman1}.
Under the setting of a continuous-time Markov chain with rate matrix
$R$ and absorbing state $0$ (without loss of generality, we can
combine all absorbing states into one state), if we define $p_{ij}(t)=P(X_{t}=j|X_{0}=i)$
and $P_{is}(t)=1-p_{i0}(t)$ , then we have that the quasi-stationary
distribution $d_{j}=\lim_{t\rightarrow\infty}\frac{p_{ij}(t)}{P_{is}(t)}$.
If we apply the Kolmogorov forward equation (known to physicists as
the master equation), we get that
\begin{equation}
\frac{dp_{ij}(t)}{dt}=\sum_{k}p_{ik}(t)R_{kj}\label{eq:1}
\end{equation}
and
\begin{equation}
\frac{dP_{is}(t)}{dt}=\frac{d}{dt}(1-p_{i0}(t))=-\sum_{k}p_{ik}(t)R_{k0}.\label{eq:2}
\end{equation}
Intuitively by the definition of $d_{j}$, we have that $p_{ij}(t)\approx d_{j}P_{is}(t)$
in the quasi-stationary time window ($t$ large enough). So we can
apply this to the preceding two equations and get 
\begin{eqnarray*}
d_{j}\left(\frac{dP_{is}(t)}{dt}\right) & = & \sum d_{k}P_{is}(t)R_{kj}\\
\frac{dP_{is}(t)}{dt} & = & -\sum_{k}d_{k}P_{is}(t)R_{k0}.
\end{eqnarray*}
Combine the two and we get
\[
d_{j}(\sum_{k}d_{k}R_{k0})+\sum_{k}d_{k}R_{kj}=0.
\]
This gives us a non-linear equation for the equilibrium condition
for the quasi-stationary distribution $\vec{d}$. We can think of
this as the stationary point of the forward equation
\begin{equation}
\frac{d(d_{j})}{dt}=\sum_{k}d_{k}R_{kj}+d_{j}(\sum_{k}d_{k}R_{k0}).\label{eq:nonlinear_mastereqn}
\end{equation}
The first part of this equation is the standard Kolmogorov forward
equation, while the second part redeposits the probability of hitting
the absorbing states onto all the non-absorbing states according to
the current distribution $d_{j}$. 

This previous discussion suggests the following algorithm:

\begin{algorithm}[H]
\begin{enumerate}
\item Initialize a vector $\vec{\mu}=0$ with dimension equal to the number
of non-absorbing states in the Markov chain. (Each component represents
the total number of visits to the corresponding non-absorbing state.)
\item Select any non-absorbing state of the chain, say state i and let $X_{0}=i$
\item Simulate the Markov chain starting from state $X_{0}$ up until absorption
and update $\vec{\mu}$ by adding the number of visits to each state
until absorption (so, for example, the number of visits to i is at
least one).
\item Select a non-absorbing state according to the normalized vector $\vec{\mu}$
(so that it becomes a probability vector). Let such non-absorbing
state be $X_{0}$ and go to Step 3.
\item Repeat Steps 3) and 4) many times and output the normalized vector
$\vec{\mu}$ as your approximation of the quasi-stationary distribution.
You can also output the averaged time to absorption in each tour as
an approximation to $\frac{1}{1-\lambda}$ where $\lambda$ is the
principal eigenvalue of the transition (rate) matrix.
\end{enumerate}
\caption{Algorithm for estimating quasi-stationary distribution}
\label{enu:mainalgo}
\end{algorithm}

For large enough time, the dynamics of the Markov chain will be governed
by Equation (\ref{eq:nonlinear_mastereqn}), which means we can obtain
the quasi-stationary distribution by examining the empirical distribution
after some large enough time.

\section{Stochastic Approximation Analysis of the Algorithm\label{sec:analysis}}

In this section, we will cast Algorithm \ref{enu:mainalgo} into a
stochastic approximation algorithm. This will let us rigorously prove
convergence and CLT for the algorithm.

\subsection{Brief Review of Stochastic Approximation and Intuition}

Consider the root-finding task of finding $\vec{\theta}$ such that
$f(\vec{\theta})=0$ with the restriction that only access to ``noisy''
observations of $f(\vec{\theta})$, denoted by $F(\vec{\theta})$,
are available. If $f$ is suitably smooth and the root is simple enough,
we can consider the descent method given by
\[
\vec{\theta}_{n+1}=\vec{\theta}_{n}+\epsilon_{n}F(\vec{\theta}_{n})
\]
where $\epsilon_{n}$ is a positive sequence going to zero.

We need to rigorously define the type of noise on the function $F$.
There are several conditions on noise which, when imposed, will lead
to convergence guarantees on $\{\vec{\theta}_{n}\}$. Here we focus
on the simplest \emph{martingale difference }noise type. Let the n-th
observation be denoted by $\vec{W}_{n}$, which could theoretically
depend on the whole history $\mathscr{F}_{n}=\sigma\{\vec{\theta}_{k},\vec{W}_{k-1}|1\leq k\leq n\}$.
In that case the descent method is written as 
\[
\vec{\theta}_{n+1}=\vec{\theta}_{n}+\epsilon_{n}\vec{W}_{n}
\]
The martingale difference noise requires that there exists a $g$
that satisfies
\[
\mathbb{E}[\vec{W}_{n}|\mathscr{F}_{n}]=\vec{g}(\vec{\theta}_{n}).
\]

If we impose the step-size condition $\sum\epsilon_{n}^{2}<\infty$
then $\vec{\theta}_{n}\approx\vec{\theta}_{0}+\sum_{k=1}^{n-1}\epsilon_{k}\vec{W}_{k}$.
The variance of the last sum is a finite number. Thus if we rewrite
the recursion as $\frac{\vec{\theta}_{n+1}-\vec{\theta}_{n}}{\epsilon_{n}}=\vec{W}_{n}$
and impose the condition $\epsilon_{n}\downarrow0$, heuristically
we predict that $\vec{\theta}_{n}$ should be related to the stationary
points of the ODE
\[
\dot{\vec{\theta}}(t)=\vec{g}(\vec{\theta}(t)).
\]

Furthermore, if we impose $\sum\epsilon_{n}=\infty$, we know that
in some sense, $\vec{\theta}_{n}$ would move by $\sum\epsilon_{n}=\infty$
steps and should converge to the stable attractors (either orbits
or points) of this ODE (\cite{kushner2003stochastic} Theorem 5.2.1).

\subsection{Precise Description of the Algorithm\label{sub:Formal-Description}}

We will now write down a precise description of the above heuristic
Algorithm \ref{enu:mainalgo} and convert it into stochastic approximation
form. 

Notation
\begin{itemize}
\item $S$ is the state space of the Markov chain whose quasi-stationary
distribution we are trying to estimate.
\item $T\subsetneq S$ is the set of transient states of the Markov chain
\item $Q$ is the substochastic matrix over the transient states $T$.
\item $\vec{\mu}_{n}$ will be a sequence of probability vectors over the
transient states $T$. This vector will store the cumulative empirical
distribution up to, and including, the n-th iteration of the algorithm.
$\vec{\mu}_{n}(x)$ is its value at a particular transient state $x$.
\item $\{X_{k}^{(n)}\}_{k}$ will be the Markov chain used in the n-th iteration
of the algorithm. They're independent conditioned on the initial distribution.
The n-th Markov chain will have initial distribution $\vec{\mu}_{n}$.
\item $\tau^{(n)}=\min\{k\geq0|X_{k}^{(n)}\not\notin T\}$. The hitting
time of the absorbing state of the n-th iteration
\end{itemize}
We can write Algorithm \ref{enu:mainalgo} as a recursion
\[
\vec{\mu}_{n+1}(x)=\frac{\left(\sum_{k=0}^{n}\tau^{(k)}\right)\vec{\mu}_{n}(x)+\left(\sum_{k=0}^{\tau^{(n+1)}-1}\mathbb{I}(X_{k}^{(n+1)}=x|X_{0}^{(n+1)}\sim\vec{\mu}_{n})\right)}{\sum_{k=0}^{n+1}\tau^{(k)}}\quad\forall x\in T
\]
where we can take the first probability vector $\vec{\mu}_{0}$ arbitrarily.

We will transform $\vec{\mu}_{n}$ into stochastic approximation form
by re-factoring:
\begin{eqnarray*}
\vec{\mu}_{n+1}(x) & = & \vec{\mu}_{n}(x)+\left(\frac{1}{n+1}\right)\left(\frac{\sum_{l=0}^{\tau^{(n+1)}-1}\left(\mathbb{I}(X_{l}^{(n+1)}=x)-\vec{\mu}_{n}(x)\right)}{\frac{1}{n+1}\sum_{j=0}^{n+1}\tau^{(j)}}\right).
\end{eqnarray*}
The denominator is problematic because its conditional expectation
(on $\mathscr{F}_{n})$ is not only a function of $\vec{\mu}_{n}$
but depends on the whole history of $\vec{\mu}_{n}$. To solve this,
we artificially add another state $T_{n}$ in the following way.

\begin{equation}
\begin{cases}
T_{n+1} & =T_{n}+\frac{1}{n+2}(\tau^{(n+1)}-T_{n})\quad\textrm{equivalent to}\quad T_{n}=\frac{1}{n+1}\sum_{j=0}^{n}\tau^{(j)}\\
\vec{\mu}_{n+1}(x) & =\vec{\mu}_{n}(x)+\\
 & \qquad\left(\frac{1}{n+1}\right)\left(\frac{\sum_{l=0}^{\tau^{(n+1)}-1}\left(\mathbb{I}(X_{l}^{(n+1)}=x|X_{0}^{(n+1)}\sim\vec{\mu}_{n})-\vec{\mu}_{n}(x)\right)}{T_{n}+\frac{\tau^{(n+1)}}{n+1}}\right).
\end{cases}\label{eq:mainalg_formal}
\end{equation}
We can therefore define
\begin{eqnarray}
Y_{n}(\vec{\mu},T)(x) & \triangleq & \frac{\sum_{l=0}^{\tau-1}\left(\mathbb{I}(X_{l}=x|X_{0}\sim\vec{\mu})-\vec{\mu}(x)\right)}{T+\frac{\tau}{n+1}}\label{eq:iterates_defined}\\
Z(\vec{\mu},T) & \triangleq & (\tau-T)\quad\textrm{where the initial distribution is \ensuremath{\vec{\mu}}},\nonumber 
\end{eqnarray}
and rewrite the stochastic approximation recursion as

\[
\begin{cases}
\vec{\mu}_{n+1}(x) & =\vec{\mu}_{n}(x)+\left(\frac{1}{n+1}\right)\vec{Y}_{n}(\vec{\mu}_{n},T_{n})(x)\\
T_{n+1} & =T_{n}+\left(\frac{1}{n+2}\right)Z(\vec{\mu}_{n},T_{n}).
\end{cases}
\]

\begin{rem}
Note:
\begin{itemize}
\item The term $\vec{Y}_{n}$ has an explicit dependence on $n$. That is
fine as that portion is asymptotically negligible. The details are
in the Appendix \ref{sub:Proof-of-DT}.
\item Please note that the iterates $\vec{\mu}_{n}$ are constrained in
$H\triangleq\{\vec{x}\in\mathbb{R}_{+}^{n}|\sum x_{i}=1\}$ (check
by inner producting with $\vec{1}$). This way we automatically satisfy
the boundedness requirement in \cite{kushner2003stochastic}.
\item We can also define a similar algorithm for the continuous-time Markov
chain by keeping track of the amount of time a Markov chain spends
in each transient state. This is given in Section \ref{sec:CTMC}.
\end{itemize}
\end{rem}

\subsection{Convergence}

The main result in this section can now be stated.
\begin{thm}
\label{thm:DTMC-unprojected}Given an irreducible absorbing Markov
chain over a finite state space $S$, let
\begin{enumerate}
\item The matrix $Q$ denote the transition probabilities over the non-absorbing
states
\item Let $\vec{\mu}_{0}$ be an arbitrary probability vector over the non-absorbing
states
\item Let $T_{0}\geq1$.
\end{enumerate}
Then there exists a unique quasi-stationary distribution $\vec{\mu}$
satisfying the equations
\begin{eqnarray*}
\vec{\mu}^{\tp}Q & = & \lambda\vec{\mu}^{\tp}\\
\vec{\mu}^{\tp}\vec{1} & = & 1\\
\vec{\mu} & \geq & 0
\end{eqnarray*}
and Algorithm \ref{enu:mainalgo} converges to the point $(\vec{\mu},\,\frac{1}{1-\lambda})$
with probability 1.

Furthermore, if $\lambda_{PV}$ is the principal eigenvalue of $Q$
and $\lambda_{NPV}$ are the other eigenvalues and they satisfy
\[
Re\left(\frac{1}{1-\lambda_{NPV}}\right)<\frac{1}{2}\left(\frac{1}{1-\lambda_{PV}}\right)\quad\forall\lambda_{NPV}\;\textrm{non-principal eigenvalues}.
\]
Then
\[
\sqrt{n}(\vec{\mu}_{n}-\vec{\mu})\rightarrow^{d}N(0,V)
\]
for some covariance matrix $V$. \end{thm}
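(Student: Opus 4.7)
The strategy is to recognize the pair $\theta_n=(\vec{\mu}_n,T_n)$ as the iterates of a classical Kushner--Yin stochastic approximation with step size $1/(n+1)$ and drift obtained by conditioning on $\mathscr{F}_n$ in (\ref{eq:iterates_defined}); the quasi-stationary distribution and $1/(1-\lambda)$ emerge as the unique fixed point of that drift. Using the occupation-time identities that the expected number of visits to $x$ before absorption from $\vec{\mu}$ equals $\vec{\mu}^{\tp}(I-Q)^{-1}\vec{e}_x$ and the expected absorption time equals $\vec{\mu}^{\tp}(I-Q)^{-1}\vec{1}$, the limiting drift (after dropping the $O(1/n)$ correction in the denominator of (\ref{eq:iterates_defined})) is
\[
\vec{g}_1(\vec{\mu},T)(x)=\tfrac{1}{T}\bigl(\vec{\mu}^{\tp}(I-Q)^{-1}\vec{e}_x-\vec{\mu}(x)\,\vec{\mu}^{\tp}(I-Q)^{-1}\vec{1}\bigr),\quad g_2(\vec{\mu},T)=\vec{\mu}^{\tp}(I-Q)^{-1}\vec{1}-T.
\]
Setting $\vec{g}=0$ forces $\vec{\mu}^{\tp}(I-Q)^{-1}$ to be proportional to $\vec{\mu}^{\tp}$; Perron--Frobenius applied to the irreducible substochastic $Q$ picks out the unique quasi-stationary $\vec{\mu}^\ast$, and then $g_2=0$ gives $T^\ast=1/(1-\lambda)$.

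\textbf{Almost-sure convergence.} I invoke the ODE method of \cite{kushner2003stochastic}. One checks: (i) boundedness of the iterates, which is automatic for $\vec{\mu}_n\in H$ and holds for $T_n$ because $\tau$ has a geometric tail under $\rho(Q)<1$; (ii) the martingale-difference noise $\vec{W}_n=\vec{Y}_n-\mathbb{E}[\vec{Y}_n\mid\mathscr{F}_n]$ has bounded second moments (using $\mathbb{E}[\tau^{2}]<\infty$), and the explicit $n$-dependence in (\ref{eq:iterates_defined}) contributes only an $o(1)$ bias absorbed into the noise; (iii) the mean ODE $\dot\theta=\vec{g}(\theta)$ has $(\vec{\mu}^\ast,T^\ast)$ as its unique globally asymptotically stable equilibrium. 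For (iii) the system is block-triangular: $T$ relaxes exponentially toward $\vec{\mu}^{\tp}(I-Q)^{-1}\vec{1}$, while the $\vec{\mu}$-dynamics reduce, after a positive time change by $T$, to a continuous-time normalized power iteration for $(I-Q)^{-1}$ whose attractor is the principal left eigenvector of $Q$.

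\textbf{Central limit theorem.} For the CLT I appeal to the standard Robbins--Monro CLT with step size $1/n$ (\cite{kushner2003stochastic}, Ch.~10): if the Jacobian $G$ of $\vec{g}$ at $(\vec{\mu}^\ast,T^\ast)$, restricted to the invariant subspace $\vec{1}^\perp\oplus\mathbb{R}$, has every eigenvalue with real part $<-\tfrac12$, then $\sqrt{n}(\theta_n-\theta^\ast)$ is asymptotically centered Gaussian with covariance $V$ solving a Lyapunov equation driven by the limiting noise covariance (explicitly computable from occupation-time identities for $Q$). The key simplification is that $\partial\vec{g}_1/\partial T=0$ at equilibrium, because $((I-Q)^{-1})^{\tp}\vec{\mu}^\ast=T^\ast\vec{\mu}^\ast$; this makes $G$ block-lower-triangular and reduces the spectral question to the $-1$ coming from the $T$-block plus the spectrum of the $\vec{\mu}$-block on $\vec{1}^\perp$. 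A direct calculation shows that the non-principal left eigenvectors of $Q$, after a shift by a multiple of $\vec{\mu}^\ast$ that places them in $\vec{1}^\perp$, diagonalize this block with eigenvalues $(\lambda_{NPV}-\lambda_{PV})/(1-\lambda_{NPV})$. The hypothesis on $\mathrm{Re}(1/(1-\lambda_{NPV}))$ is then algebraically equivalent to $\mathrm{Re}\bigl((\lambda_{NPV}-\lambda_{PV})/(1-\lambda_{NPV})\bigr)<-\tfrac12$, i.e.\ exactly the CLT condition.

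\textbf{Main obstacle.} The chief difficulty is the handling of the simplex constraint $\vec{1}^{\tp}\vec{\mu}_n=1$ in the spectral analysis. The direction $\vec{\mu}^\ast$ is itself an eigenvector of $G$ lying outside the tangent space, and must be projected out before the CLT is applied; concretely, the CLT has to be proved on the quotient obtained by killing the conserved linear functional $\vec{1}^{\tp}\vec{\mu}$, and the noise increments must be checked to live in the same subspace (which holds since $\vec{1}^{\tp}\vec{Y}_n=0$ by direct inspection of (\ref{eq:iterates_defined})). This projection is exactly what the urn-process analyses of \cite{athreya,pemantle} stop short of carrying out, and it is the reason our CLT is valid in every direction of $\vec{1}^\perp$ rather than only along individual non-principal right eigenvectors. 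A secondary, more routine nuisance is showing that the $\tau^{(n+1)}/(n+1)$ perturbation in the denominator of (\ref{eq:iterates_defined}) does not disturb the asymptotic covariance, which follows from a Taylor expansion together with the finite-moment bounds on $\tau$.
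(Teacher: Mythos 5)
Your proposal is correct and mirrors the paper's proof: both cast the iterates as a two-component Kushner--Yin recursion, reduce the mean ODE to a normalized power iteration for $(I-Q)^{-1}$ via a positive time change by $T$, and obtain the CLT by showing the block lower-triangular Jacobian at $(\bar{\vec{\mu}},\bar{T})$ has non-principal eigenvalues $\frac{\lambda_{NPV}-\lambda_{PV}}{1-\lambda_{NPV}}$, which matches the correspondence $\lambda_J=\frac{\lambda_B}{\beta}-1$ established in Theorem~\ref{thm:hurwitz}. The one place you overcomplicate things is the ``main obstacle'': the paper avoids any explicit quotient construction by noting that the extraneous eigenvalue of $J$ along $\bar{\vec{\mu}}$ is exactly $-1$, which already lies in the Hurwitz half-plane, so the full Jacobian satisfies the hypotheses of Theorem~10.2.1 of \cite{kushner2003stochastic} without projecting out that direction.
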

\begin{proof}
The full proof in the Appendix \ref{sub:Proof-of-DT} but we outline
the main idea here. The technique uses the ODE method (\cite{kushner2003stochastic}
Theorem 5.2.1) where we are required to examine the asymptotic behavior
of the coupled dynamical system below. Here we neglect the asymptotically
negligible dependence on $n$ in order to illustrate the main idea.
The dynamical system of interest is
\begin{eqnarray*}
\dot{\vec{\mu}}(t) & = & \mathbb{E}_{\vec{\mu}(t),T(t)}\left[\frac{\sum_{l=0}^{\tau-1}\left(\mathbb{I}(X_{l}=\cdot|X_{0})\right)-\tau\vec{\mu}(t)}{T(t)}\right]\\
 & = & \frac{1}{T}\left[\vec{\mu}(t)^{\tp}A-(\vec{\mu}(t)^{\tp}A\vec{1})\vec{\mu}^{\tp}(t)\right]\quad\mbox{where A\ensuremath{\triangleq}(I-Q\ensuremath{)^{-1}}}
\end{eqnarray*}

\begin{eqnarray*}
\dot{T}(t) & = & \mathbb{E}_{\vec{\mu}(t)}[\tau]-T(t)\\
 & = & \vec{\mu}(t)^{\tp}(I-Q)^{-1}\vec{1}-T(t)
\end{eqnarray*}
where $\vec{\mu}(t)\in\mathbb{R}^{m}$ and $T(t)\in\mathbb{R}^{+}$.
($m$ is the number of non-absorbing states of the Markov chain)

Again in the Appendix \ref{sub:Proof-of-DT}, we are able to show
that for a given initial position in the probability simplex, the
solution to the above dynamical system exists and converges to its
stationary point which is the unique point that satisfies
\begin{eqnarray*}
\vec{\mu}^{\tp}Q & = & \rho\vec{\mu}^{\tp}\\
\sum\mu_{i} & = & 1\\
\mu_{i} & \geq & 0
\end{eqnarray*}
and $\rho=1-\frac{1}{E_{\vec{\mu}}(\tau)}$.

By Theorem 5.2.1 from \cite{kushner2003stochastic}, we can conclude
that $\vec{\mu}_{n}$ converges to the quasi-stationary distribution
for all initial configurations $(\vec{\mu}_{0},T_{0})$. 

Equation (\ref{eq:mainalg_formal}) can be analyzed for its rate of
convergence. Here we invoke the Theorem 10.2.1 of \cite{kushner2003stochastic}.
Because our algorithm uses a step size of $O(\frac{1}{n})$, we eventually
conclude that a CLT exists as long as the Jacobian matrix of the ODE
vector field has spectral radius less than $-\frac{1}{2}$. This is
equivalent to requiring that
\begin{equation}
Re\left(\frac{1}{1-\lambda_{NPV}}\right)<\frac{1}{2}\left(\frac{1}{1-\lambda_{PV}}\right)\quad\forall\lambda_{NPV}\;\textrm{non-principal eigenvalues}\label{eq:suffcond}
\end{equation}
where the $\lambda$'s are the eigenvalues of the $Q$ matrix.
\end{proof}

\section{Variations on the Existing Algorithm with Improved Rate of Convergence\label{sec:Variations}}

One interesting question to ask is what happens when the sufficient
conditions for CLT are not met. We will study a simple example consisting
of two states.

\subsection{Counter Example to CLT\label{sub:Counter-example}}

Imagine we have a Markov chain with three states $\{0,1,2\}$ and
transition matrix
\[
\left[\begin{array}{ccc}
1 & 0 & 0\\
\epsilon & \frac{1-\epsilon}{2} & \frac{1-\epsilon}{2}\\
\epsilon & \frac{1-\epsilon}{2} & \frac{1-\epsilon}{2}
\end{array}\right].
\]
Obviously the state $\{0\}$ is the absorbing state. In this setup,
because of symmetry, our Algorithm \ref{enu:mainalgo} reduces to
\begin{enumerate}
\item With probability $\frac{1-\epsilon}{2}$ sample either the state 1
or 2 (without knowing the previous state. This is OK by symmetry)
and add to the empirical distribution.
\item With probability $\epsilon$, sample from either 1 or 2 according
to the empirical distribution up until this point.
\end{enumerate}
We recognize this as a self-interacting Markov chain.

A self-interacting Markov chain (SIMC) \cite{delMoralSIMC} is a stochastic
process $\{X_{n}\}$ such that
\[
\mathbb{P}\left(X_{n+1}\in dx|\mathcal{F}_{n}\right)=\Phi(S_{n})(dx)
\]
where $\Phi$ is a function that transforms one measure into another
measure and $S_{n}$ is the empirical measure generated by $\{X_{k}\}_{k=0}^{n}$.

Our Algorithm \ref{enu:mainalgo} for the above ``loopy Markov chain''
reduces to the empirical process of a SIMC $X_{n}$ governed by the
functional
\[
\mathbb{P}(X_{n+1}=dz|\mathcal{F}{}_{n})=\int K(x,dz)dS_{n}(dx)
\]
where the kernel is given by
\[
K(x,dz)=\epsilon\delta_{x}(dz)+\left(\frac{1-\epsilon}{2}\right)\left[\delta_{1}(dz)+\delta_{2}(dz)\right]
\]

The sufficient condition for CLT (Equation (\ref{eq:suffcond})) in
this case translates to requiring $\epsilon<0.5$. 

When the CLT is violated however, \cite{delMoralSIMC} states that
over a very general class of bounded and measurable functions $f$
\[
\mathbb{E}[(S_{n}(f)-\bar{S}_{n}(f))^{2}]=\Theta\left(\frac{1}{n^{2(1-\epsilon)}}\right)
\]
where $S_{n}(f)=\int f(x)dS_{n}(x)$, $\bar{S}_{n}(f)=\mathbb{E}[S_{n}(f)]$.
Although this doesn't technically contradict with the existence of
a $\sqrt{n}$-CLT, it does suggest that the scaling sequence is $n^{1-\epsilon}$
instead of $\sqrt{n}$.

In the numerical experiment (Section \ref{sec:NumericalExp.}), we
simulate this example and demonstrate the slow rate of convergence
when $\epsilon<0.5$.

\subsection{Projection Algorithm and Polyak-Ruppert Averaging\label{sub:Projection-Algorithm}}

\subsubsection*{Doeblinization and the need for strong CLT}

The expected time to absorption $\mathbb{E}[\tau]$ is $\frac{1}{1-\lambda}$
where $\lambda$ is the principle eigenvalue of the substochastic
matrix $Q$. If $\mathbb{E}[\tau]$ is large, then the iterations
of the algorithm will take prohibitively long. One trick that can
be used is to ``Doeblinize'' the chain.

If we multiply $Q$ by a constant $\alpha<1$, this does not change
the eigenvector but shrinks the all the eigenvalues by the same proportion.
That means we can force the iterations to jump to absorption very
quickly. However, because of the non-linearity of $\frac{1}{1-\lambda}$
and its presence in the sufficient condition of the CLT (Equation
\ref{eq:suffcond}), the CLT condition will fail to hold if $\alpha$
is too small. We need a technique where CLT can always be guaranteed
regardless of the eigenvalues of the matrix $Q$.

Remark: in continuous-time, we can subtract $\alpha I$ matrix from
the transition rate matrix to achieve Doeblinization.

\subsubsection*{Projection algorithm}

By putting our algorithm into the stochastic approximations framework,
we can modify the algorithm into the projection-variant.
\begin{equation}
\vec{\mu}_{n+1}=\Theta_{H}\left[\vec{\mu}_{n}+\epsilon_{n}\left(\sum_{k=0}^{\tau^{(n+1)}-1}\mathbb{I}(X_{k}^{(n+1)}=\cdot|X_{0}^{(n+1)}\sim\vec{\mu}_{n})-\vec{\mu}_{n})\right)\right]\label{eq:Projection-algorithm}
\end{equation}
where the $\Theta_{H}$ denotes a $L_{2}$-projection into the probability
simplex. Of course we still require $\sum\epsilon_{n}=\infty$ and
$\sum\epsilon_{n}^{2}<\infty$. Notice that in practice, we only need
to perform very few number of projections. The expression inside the
projection operator always sum to one. So projection is only needed
if any component inside $\Theta$ becomes negative. Breaking it down
allows us to gain insight into when it becomes negative
\[
\vec{\mu}_{n}(1-\epsilon_{n}\tau^{(n+1)})+\epsilon_{n}\left(\sum I(...)\right).
\]
This can only be negative if $\tau>\frac{1}{\epsilon_{n}}$. But $\epsilon_{n}\downarrow0$
means this won't happen very often. The advantage of this version
is that we are free to use slower step sizes that weakens the condition
required for CLT to hold. Specifically, when $\epsilon_{n}=\Theta(\frac{1}{n^{\alpha}})$
for $\alpha<0.5$, a $\frac{1}{\sqrt{\epsilon_{n}}}$-CLT always hold.
\begin{thm}
\label{thm:projection-thm}Given an irreducible absorbing Markov chain
over a finite state space $S$, let
\begin{enumerate}
\item The matrix $Q$ denote the transition probabilities over the non-absorbing
states
\item Let $\vec{\mu}_{0}$ (the initial $\vec{\mu}$) be a probability vector
over the non-absorbing states
\item Let $T_{0}\geq1$.
\end{enumerate}
Then there exists a unique quasi-stationary distribution $\vec{\mu}$
satisfying the equations

\begin{eqnarray*}
\vec{\mu}^{\tp}Q & = & \lambda\vec{\mu}^{\tp}\\
\vec{\mu}^{\tp}\vec{1} & = & 1\\
\vec{\mu} & \geq & 0
\end{eqnarray*}
and the projection algorithm (Equation \ref{eq:Projection-algorithm})
converges to the point $\vec{\mu}$ with probability 1.

If step sizes are such that $\epsilon_{n}=\Theta\left(\frac{1}{n}\right)$
and if $\lambda_{PV}$ is the principal eigenvalue of $Q$ and $\lambda_{NPV}$
are the other eigenvalues and they satisfy
\[
Re\left(\frac{1}{1-\lambda_{NPV}}\right)<\frac{1}{2}\left(\frac{1}{1-\lambda_{PV}}\right)\quad\forall\lambda_{NPV}\;\textrm{non-principal eigenvalues}.
\]
Furthermore, we can conclude 
\[
\sqrt{n}(\vec{\mu}_{n}-\vec{\mu})\rightarrow^{d}N(0,V)
\]
for some covariance matrix $V$.

In the case that the step sizes are such that $\epsilon_{n}=\Theta\left(\frac{1}{n^{\alpha}}\right)$
for $0.5<\alpha<1$, we can conclude that (regardless of the eigenvalues
of $Q$) 
\[
\sqrt{n^{\alpha}}(\vec{\mu}_{n}-\vec{\mu})\rightarrow^{d}N(0,V)
\]
for some covariance matrix $V$.\end{thm}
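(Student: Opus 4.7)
The plan is to recognize Equation (\ref{eq:Projection-algorithm}) as a projected Robbins--Monro recursion on the simplex $H$, driven by the random increment
\[
\vec{Y}_{n+1}(\vec{\mu}_{n}) \;=\; \sum_{k=0}^{\tau^{(n+1)}-1}\!\bigl[\mathbb{I}(X_{k}^{(n+1)}=\cdot)-\vec{\mu}_{n}\bigr],
\]
and to apply the general convergence and CLT theory for constrained stochastic approximations from \cite{kushner2003stochastic}. The first step is to identify the mean vector field. Using $\mathbb{E}_{\vec{\mu}}[\sum_{k=0}^{\tau-1}\mathbb{I}(X_{k}=\cdot)]=\vec{\mu}^{\tp}(I-Q)^{-1}$ and $\mathbb{E}_{\vec{\mu}}[\tau]=\vec{\mu}^{\tp}(I-Q)^{-1}\vec{1}$, one obtains the drift
\[
\bar{g}(\vec{\mu}) \;=\; \vec{\mu}^{\tp}A - (\vec{\mu}^{\tp}A\vec{1})\,\vec{\mu}^{\tp}, \qquad A\triangleq(I-Q)^{-1},
\]
which lies in the tangent hyperplane of $H$ and vanishes precisely at the quasi-stationary distribution $\vec{\mu}^{*}$ whose existence and uniqueness were already recorded in Theorem \ref{thm:DTMC-unprojected}. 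Because $\tau^{(n+1)}$ has exponential moments uniformly in $\vec{\mu}_{n}$ (standard Perron--Frobenius consequence of irreducibility of $Q$), the martingale-difference noise $\vec{Y}_{n+1}-\bar{g}(\vec{\mu}_{n})$ has uniformly bounded conditional second moments. Together with boundedness of the iterates (automatic from the projection), this verifies the hypotheses of Theorem 5.2.3 of \cite{kushner2003stochastic}, so w.p.1 convergence reduces to analyzing the projected mean ODE.

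The second step is the ODE analysis. The relevant ODE is $\dot{\vec{\mu}}=\bar{g}(\vec{\mu})+z$, with reflection $z$ keeping $\vec{\mu}(t)\in H$. The vector field $\bar{g}$ is, up to a state-dependent positive time-change by $T(t)$, the one already analyzed in the unprojected case in Appendix \ref{sub:Proof-of-DT}; the same Lyapunov argument (leveraging entrywise positivity of $A$ and the Perron--Frobenius spectral gap between $\lambda_{PV}$ and the remaining $\lambda_{NPV}$) gives global asymptotic stability of $\vec{\mu}^{*}$ on $H$. Linearizing $\bar{g}$ at $\vec{\mu}^{*}$ and restricting to $\{v:\vec{1}^{\tp}v=0\}$ yields a Jacobian $J$ whose spectrum on this hyperplane is
\[
\Bigl\{\tfrac{1}{1-\lambda_{NPV}}-\tfrac{1}{1-\lambda_{PV}} \;:\; \lambda_{NPV}\in\sigma(Q)\setminus\{\lambda_{PV}\}\Bigr\},
\]
as one checks by writing $\vec{\mu}=\vec{\mu}^{*}+\vec{v}$ and expanding to first order, noting that the left eigenvectors of $A$ and of $Q$ coincide.

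The third step invokes two different CLT results depending on the step size. For $\epsilon_{n}=\Theta(1/n)$, Theorem 10.2.1 of \cite{kushner2003stochastic} requires every eigenvalue of $J$ to have real part strictly less than $-1/2$, which by the spectrum above is exactly the stated hypothesis $\mathrm{Re}(1/(1-\lambda_{NPV}))<(1/2)(1/(1-\lambda_{PV}))$; the limiting covariance $V$ is the unique solution of a Lyapunov equation determined by $J+\tfrac{1}{2}I$ and the asymptotic conditional noise covariance $\Sigma=\mathrm{Cov}(\vec{Y}(\vec{\mu}^{*}))$. For $\epsilon_{n}=\Theta(1/n^{\alpha})$ with $\alpha\in(1/2,1)$, the $-1/2$ threshold disappears because local noise averages on a strictly faster time scale than the drift contracts; one needs only Hurwitzness of $J$ (automatic from global asymptotic stability). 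Invoking Theorem 10.2.2 of \cite{kushner2003stochastic} gives the $\sqrt{n^{\alpha}}$-CLT with covariance $V$ solving $JV+VJ^{\tp}+\Sigma=0$, regardless of the spectrum of $Q$.

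The main obstacle is handling the projection inside the CLT. Since the projection is active only when some coordinate of $\vec{\mu}_{n}+\epsilon_{n}\vec{Y}_{n+1}$ turns negative, which forces $\tau^{(n+1)}>1/\epsilon_{n}$, the exponential tail of $\tau$ makes the projection active on a set whose probability is summable. One must nonetheless show rigorously that the cumulative reflection terms $\{z_{n}\}$ contribute no variance to the limit, so that the constrained CLT collapses to the unconstrained CLT on the tangent hyperplane $\{v:\vec{1}^{\tp}v=0\}$; combined with the Lyapunov argument transferring the ODE stability result of Appendix \ref{sub:Proof-of-DT} to the simplex-restricted dynamics, this is where the bulk of the technical work lies.
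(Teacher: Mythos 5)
Your proposal follows essentially the same route as the paper's (quite terse) proof: cast the projected recursion into the Kushner--Yin framework, identify the mean ODE $\dot{\vec{\mu}} = \vec{\mu}^{\tp}A - (\vec{\mu}^{\tp}A\vec{1})\vec{\mu}^{\tp}$ with $A = (I-Q)^{-1}$, transfer global stability on $H$ from Appendix~\ref{sub:Proof-of-DT}, linearize at $\bar{\vec{\mu}}$, and invoke Theorem~10.2.1 (resp.\ its slow-step-size analogue) for the two CLT regimes; your discussion of the reflection terms and of the slow-time-scale elimination of the $-1/2$ shift is more explicit than what the paper writes.

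One wrinkle worth flagging. You correctly observe that the $T$-free projected dynamics has Jacobian spectrum $\{\,\tfrac{1}{1-\lambda_{NPV}} - \tfrac{1}{1-\lambda_{PV}}\,\}$ on $\vec{1}^{\perp}$, i.e.\ $\lambda_{B}-\beta$ in the notation of Theorem~\ref{thm:hurwitz} (the Jacobian here is $\beta J$, not $J$). But requiring real part $<-\tfrac{1}{2}$ for these eigenvalues gives $\mathrm{Re}(\lambda_{B}) < \beta - \tfrac{1}{2}$, which is \emph{not} ``exactly'' the stated hypothesis $\mathrm{Re}(\lambda_{B}) < \beta/2$; since $\beta = \tfrac{1}{1-\lambda_{PV}} > 1$, the stated hypothesis is strictly stronger than what the projected recursion with $\epsilon_{n}=1/n$ actually needs. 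The theorem still holds (the stated hypothesis is sufficient), so this is a misstatement rather than a gap; but it is precisely the point the paper glosses over when it says the argument is ``almost identical to Section~\ref{sub:Proof-of-DT} after omitting the extra dimension $T_{n}$''---dropping $T_{n}$ rescales the Jacobian by $\beta$, which shifts the $-\tfrac{1}{2}$ threshold, and your ``exactly'' inherits that elision. For the $\alpha\in(1/2,1)$ regime the discrepancy is immaterial, since $\beta J$ Hurwitz iff $J$ Hurwitz, which is what both you and the paper use.
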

\begin{proof}
The proof for the case of step size $\epsilon_{n}=\Theta\left(\frac{1}{n}\right)$
is almost identical to what's given in Section \ref{sub:Proof-of-DT}
after omitting the extra dimension $T_{n}$. In the case of $\epsilon_{n}=\Theta\left(\frac{1}{n^{\alpha}}\right)$
for $\alpha<0.5$, under the notation of Theorem \ref{thm:hurwitz},
we need to ensure that $J$ is Hurwitz as opposed to the stronger
condition that $J+\frac{I}{2}$ is Hurwitz. This is equivalent to
the condition that, (again under the notation of Theorem \ref{thm:hurwitz})
\[
Re(\lambda_{B})<\beta
\]
which is trivially always true by the Perron-Frobenius theorem \cite{karlin_taylor}.
Hence we can conclude that 
\[
\sqrt{n^{\alpha}}(\vec{\mu}_{n}-\vec{\mu})\rightarrow^{d}N(0,V)
\]
 by invoking Theorem 10.2.1 of \cite{kushner2003stochastic}.
\end{proof}

\subsubsection*{Polyak-Ruppert Averaging}

The Polyak-Ruppert averaging technique \cite{polyak1992acceleration},
(Theorem 11.1.1 in \cite{kushner2003stochastic} can be applied to
the projection algorithm to ensure that $\sqrt{n}$-CLT always holds
as long as we pick the step sequence to be $\epsilon_{n}=\Theta(\frac{1}{n^{\alpha}})$
for $\alpha<0.5$.
\begin{thm}
\label{thm:polyak-thm}

Given an irreducible absorbing Markov chain over a finite state space
$S$, let
\begin{enumerate}
\item The matrix $Q$ denote the transition probabilities over the non-absorbing
states
\item Let $\vec{\mu}_{0}$ (the initial $\vec{\mu}$) be a probability vector
over the non-absorbing states
\item Let $T_{0}\geq1$.
\end{enumerate}
Then there exists a unique quasi-stationary distribution $\vec{\mu}$
satisfying the equations

\begin{eqnarray*}
\vec{\mu}^{\tp}Q & = & \lambda\vec{\mu}^{\tp}\\
\vec{\mu}^{\tp}\vec{1} & = & 1\\
\vec{\mu} & \geq & 0
\end{eqnarray*}
and the step sizes satisfy $\epsilon_{n}=\Theta\left(\frac{1}{n^{\alpha}}\right)$
for $\alpha<0.5$. We can conclude that the averaged sequence 
\[
\vec{\nu}_{n}=\frac{1}{n}\sum_{k=1}^{n}\vec{\mu}_{k}
\]
 converges to the point $\vec{\mu}$ with probability 1.

Furthermore, a strong CLT always hold 
\[
\sqrt{n}(\vec{\nu}_{n}-\vec{\mu})\rightarrow^{d}N(0,V)
\]
for some covariance matrix $V$.
\end{thm}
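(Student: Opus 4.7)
The plan is to verify the hypotheses of Theorem 11.1.1 of Kushner-Yin, the canonical Polyak-Ruppert CLT for projected stochastic approximation, and invoke it directly. The hypotheses I need to check are: (i) almost-sure convergence $\vec\mu_n \to \vec\mu$; (ii) the Jacobian $J$ of the mean field at $\vec\mu$ is Hurwitz; (iii) the step-size sequence satisfies the slow-decay conditions $\epsilon_n \downarrow 0$, $\sum \epsilon_n = \infty$, and $(\epsilon_n^{-1} - \epsilon_{n-1}^{-1})/\epsilon_n \to 0$; and (iv) the one-step innovation decomposes into a martingale difference with bounded conditional second moment plus an asymptotically negligible remainder.

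For $\epsilon_n = \Theta(n^{-\alpha})$ with $\alpha \in (0, 1/2)$, the step-size conditions in (iii) are immediate: $\sum \epsilon_n = \infty$ since $\alpha < 1$, and $(\epsilon_n^{-1} - \epsilon_{n-1}^{-1})/\epsilon_n = \Theta(n^{2\alpha - 1}) \to 0$ since $\alpha < 1/2$. The a.s. convergence in (i) follows from the ODE argument already used to prove Theorem \ref{thm:projection-thm}: the projected recursion lives on the compact simplex $H$, so Kushner-Yin Theorem 5.2.1 applies under these weaker step-size conditions (notably, $\sum \epsilon_n^2 < \infty$ is not required for the ODE method on a compact constraint set), and the limit ODE is the same as in the faster step-size case and still has $\vec\mu$ as its unique globally attracting equilibrium. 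The Hurwitz condition (ii) for the Jacobian of $\vec g(\vec\mu) = \vec\mu^{\tp} A - (\vec\mu^{\tp} A \vec 1)\vec\mu$ with $A = (I-Q)^{-1}$ was established in the proof of Theorem \ref{thm:projection-thm} via Perron-Frobenius and holds with no spectral-gap assumption on $Q$.

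For (iv), writing $\vec W_n \triangleq \sum_{k=0}^{\tau^{(n+1)}-1} \mathbb{I}(X_k^{(n+1)} = \cdot) - \tau^{(n+1)} \vec\mu_n - \vec g(\vec\mu_n)$, the martingale-difference property follows from conditional independence of successive excursions given $\mathscr F_n$, and the uniform geometric tails of $\tau^{(n+1)}$ on a finite irreducible absorbing chain give a uniform bound on the conditional second moment (and indeed on any finite moment). The main obstacle is handling the projection operator $\Theta_H$ within the Polyak-Ruppert framework: because $\vec\mu$ lies strictly in the interior of $H$ by Perron-Frobenius (all coordinates of the QSD of an irreducible chain are positive), the event that projection is active eventually has probability tending to zero fast enough to be absorbed into the negligible-remainder term of Kushner-Yin 11.1.1, so the local fluctuation analysis reduces to that of an unconstrained linearized recursion around $\vec\mu$. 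Combining these ingredients, Theorem 11.1.1 of Kushner-Yin yields the claimed limit $\sqrt n (\vec\nu_n - \vec\mu) \Rightarrow N(0, V)$ with $V = J^{-1} \Sigma (J^{-1})^{\tp}$, where $\Sigma$ is the one-step noise covariance at $\vec\mu$; a.s. convergence of $\vec\nu_n$ to $\vec\mu$ is then a Cesaro consequence of (i).
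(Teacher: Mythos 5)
You take the same route as the paper: both reduce the claim to an invocation of Kushner--Yin's iterate-averaging CLT (Theorem 11.1.1 of \cite{kushner2003stochastic}), and since the paper gives essentially no detail beyond pointing to that reference, your fleshing-out of the step-size, Hurwitz, noise, and projection conditions is the natural way to complete the argument.

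However, one step is under-justified. You assert that almost-sure convergence of $\vec{\mu}_n$ to $\vec{\mu}$ under $\epsilon_n = \Theta(n^{-\alpha})$, $\alpha < 1/2$, follows from Kushner--Yin Theorem 5.2.1, on the grounds that ``$\sum_n \epsilon_n^2 < \infty$ is not required for the ODE method on a compact constraint set.'' But the paper's own appendix restates $\sum_n \epsilon_n^2 < \infty$ as an explicit hypothesis of that theorem, and this condition fails precisely when $\alpha < 1/2$. Compactness of $H$ bounds the iterates, but it does not by itself control the accumulated martingale noise $\sum_{k} \epsilon_k \delta\vec{M}_k$, whose quadratic variation is infinite in this regime. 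Almost-sure convergence is very plausibly still true (a Borel--Cantelli argument exploiting the geometric tails of $\tau$ together with the attraction toward $\vec{\mu}$ should deliver it), but it requires a different theorem or a direct argument, and Theorem 11.1.1 cannot be invoked until this hypothesis is actually verified. Note that the paper itself does not address this point either---the proof sketch of Theorem \ref{thm:projection-thm} for the slow step-size regime jumps directly to the Hurwitz condition without revisiting almost-sure convergence---so you have inherited a gap from the paper rather than created a new one, but it should be flagged and filled rather than asserted as automatic.
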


\section{Algorithm for Continuous-Time Markov Chains\label{sec:CTMC}}

\subsection{Formulation and Convergence}

So far, the exposition has assumed that the Markov chain of interest
is a discrete-time process. It is straightforward to adapt our method
for continuous-time processes (such as the contact process). If we
denote the transition rate matrix of the CTMC in the following block
form
\[
T=\left[\begin{array}{cc}
0 & 0\\
N & Q
\end{array}\right]
\]
then we can write the algorithm as

\begin{eqnarray}
\vec{\mu}_{n+1}(x) & = & \vec{\mu}_{n}(x)+\nonumber \\
 &  & \frac{1}{n+1}\frac{\int_{0}^{\tau^{n+1,m}}\left(\mathbb{I}(X^{n+1}(s)=x|X_{0}^{n+1}\sim\vec{\mu}_{n})\right)ds-\tau^{n+1}\vec{\mu}_{n}(x)}{\frac{1}{n+1}\sum_{l=0}^{n+1}\tau^{l}}\label{eq:ct-alg-formal}\\
T_{n+1} & = & T_{n}+\frac{1}{n+2}\left(\tau^{n+1}-T_{n}\right).\nonumber 
\end{eqnarray}

By a similar approach as the discrete-time case, we deduce the related
dynamical system
\[
\begin{cases}
\dot{\vec{\mu}}(t) & =-\frac{1}{T(t)}\left(\vec{\mu}(t)^{\tp}Q^{-1}-(\vec{\mu}(t)^{\tp}Q^{-1}\vec{1})\vec{\mu}(t)^{\tp}\right)\\
\dot{T}(t) & =-\vec{\mu}(t)^{\tp}Q^{-1}\vec{1}-T(t).
\end{cases}
\]
It is straightforward to adapt the Perron-Frobenius theorem to transition
rate matrices such as $Q$ by decomposing $Q=A-bI$ where $A$ is
an irreducible matrix. We know the existence of a principal eigenvector
of positive entries $\bar{\vec{\mu}}$ (with eigenvalue smaller than
$0$) such that
\[
\bar{\vec{\mu}}^{\tp}Q=\bar{\lambda}\bar{\vec{\mu}}^{\tp}.
\]
The rest of the proof is very similar to the discrete-time case. The
only trick is to show that $\exp(-Q^{-1})$ is a matrix of non-negative
entries. That is included in the Lemma \ref{lem:nonnegative-entries}.
We summarize it in theorem form
\begin{thm}
\label{thm:CTMC-thm}Given an irreducible absorbing Markov chain over
a finite state space $S$, let
\begin{enumerate}
\item The matrix $Q$ denote the transition rates over the non-absorbing
states
\item Let $\vec{\mu}_{0}$ (the initial $\vec{\mu}$) be a probability vector
over the non-absorbing states
\item Let $T_{0}\geq1$.
\end{enumerate}
Then there exists a unique quasi-stationary distribution $\vec{\mu}$
satisfying the equations

\begin{eqnarray*}
\vec{\mu}^{\tp}Q & = & \lambda\vec{\mu}^{\tp}\\
\vec{\mu}^{\tp}\vec{1} & = & 1\\
\vec{\mu} & \geq & 0
\end{eqnarray*}
and the continuous-time algorithm (Equation \ref{eq:ct-alg-formal})
converges to the point $(\vec{\mu},\,-\frac{1}{\lambda})$ with probability
1.
\end{thm}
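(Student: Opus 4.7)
The plan is to mirror the stochastic-approximation / ODE-method argument used for Theorem \ref{thm:DTMC-unprojected}, with the expected occupation-time operator $-Q^{-1}$ playing the role of the discrete resolvent $(I-Q)^{-1}$. First I would recast Equation \ref{eq:ct-alg-formal} in the form $\vec{\mu}_{n+1}=\vec{\mu}_{n}+(n+1)^{-1}\vec{Y}_{n}$, $T_{n+1}=T_{n}+(n+2)^{-1}Z_{n}$, identify the martingale-difference structure of the innovations with respect to $\mathscr{F}_{n}$, and verify the moment, tightness, and continuity hypotheses of Theorem 5.2.1 of \cite{kushner2003stochastic}. The necessary moment bounds reduce to uniform geometric tail bounds on the absorption time $\tau$ across initial distributions in the simplex, which hold because the finite irreducible absorbing CTMC has a uniformly positive escape rate into the absorbing class.

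Using $\mathbb{E}_{\vec{\mu}}[\int_{0}^{\tau}\mathbb{I}(X(s)=\cdot)\,ds]=-\vec{\mu}^{\tp}Q^{-1}$ and $\mathbb{E}_{\vec{\mu}}[\tau]=-\vec{\mu}^{\tp}Q^{-1}\vec{1}$, the conditional drifts produce exactly the coupled ODE displayed in the excerpt. The simplex $H$ is invariant under the $\vec{\mu}$-flow since $\vec{1}^{\tp}\dot{\vec{\mu}}(t)=0$, and the scalar $T(t)$ stays bounded away from $0$ and $\infty$ along any trajectory starting in $H\times[1,\infty)$.

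The crux is the asymptotic analysis of the ODE on $H\times\mathbb{R}_{+}$. After a time-change $d\sigma=dt/T(t)$, valid by the boundedness of $T$, I would apply the normalization trick: if $\vec{v}(\sigma)$ solves the linear equation $\dot{\vec{v}}^{\tp}=-\vec{v}^{\tp}Q^{-1}$, then $\vec{\mu}(\sigma)=\vec{v}(\sigma)/(\vec{v}(\sigma)^{\tp}\vec{1})$ solves the projected nonlinear $\vec{\mu}$-equation, so $\vec{\mu}(\sigma)\propto\vec{v}_{0}^{\tp}\exp(-\sigma Q^{-1})$. Lemma \ref{lem:nonnegative-entries} guarantees that $\exp(-\sigma Q^{-1})$ has non-negative entries and inherits irreducibility from $Q=A-bI$, so Perron-Frobenius on $\exp(-Q^{-1})$ returns a unique positive dominant left eigenvector. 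Since $\exp(-Q^{-1})$ and $Q$ share left eigenvectors and the principal (least-negative-real-part) eigenvalue $\lambda$ of $Q$ maps to the dominant eigenvalue $e^{-1/\lambda}$ of $\exp(-Q^{-1})$, this dominant eigenvector is precisely the quasi-stationary distribution $\vec{\mu}$, and $\vec{\mu}(\sigma)\to\vec{\mu}$ globally on the relative interior of $H$. The scalar $T$-equation is then a linear ODE with time-varying forcing $-\vec{\mu}(t)^{\tp}Q^{-1}\vec{1}\to -1/\lambda$ (using $\vec{\mu}^{\tp}Q^{-1}=\vec{\mu}^{\tp}/\lambda$ together with $\vec{\mu}^{\tp}\vec{1}=1$), and relaxes exponentially to $-1/\lambda$. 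Feeding the unique globally attracting equilibrium $(\vec{\mu},-1/\lambda)$ into Theorem 5.2.1 of \cite{kushner2003stochastic} yields $(\vec{\mu}_n,T_n)\to(\vec{\mu},-1/\lambda)$ almost surely.

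The principal obstacle is establishing global attractivity in the ODE analysis. The linearization-by-normalization trick removes the nonlinearity, but is valid only so long as $\vec{v}(\sigma)^{\tp}\vec{1}>0$ for all $\sigma\geq 0$ and every $\vec{v}_{0}\in H$; this is precisely what Lemma \ref{lem:nonnegative-entries} supplies, by forcing $\exp(-\sigma Q^{-1})$ to preserve the positive cone. The remaining care is the verification, via spectral mapping and the sign structure $\lambda<0$, that the Perron eigenpair of $\exp(-Q^{-1})$ corresponds to the principal eigenpair of $Q$ rather than to some non-principal branch, which follows from the fact that the least-negative $\lambda$ maximizes $-1/\lambda$ among the eigenvalues of $-Q^{-1}$.
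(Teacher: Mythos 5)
Your proposal is correct and follows essentially the same route as the paper: cast Equation \eqref{eq:ct-alg-formal} in the Kushner--Clark form, identify the coupled ODE with $-Q^{-1}$ replacing $(I-Q)^{-1}$, pass to a time-change $d\sigma = dt/T(t)$, reduce the nonlinear $\vec{\mu}$-flow to the linear flow $\dot{\vec{v}}^{\tp}=-\vec{v}^{\tp}Q^{-1}$ via normalization, and invoke Lemma \ref{lem:nonnegative-entries} plus Perron--Frobenius to obtain global convergence to the quasi-stationary eigenvector; the paper's Duhamel representation $\vec{\nu}(t)^{\tp}=\vec{\nu}(0)^{\tp}\exp(At)\exp(-\int_{0}^{t}\vec{\nu}(s)^{\tp}A\vec{1}\,ds)$ is identical to your normalization $\vec{v}(\sigma)/(\vec{v}(\sigma)^{\tp}\vec{1})$ once one inner-products with $\vec{1}$. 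The only cosmetic difference is that the paper subtracts the Perron eigenvalue $\beta t$ inside the exponential rather than dividing by the mass, but both steps produce the same limit.
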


\subsection{Rate of Convergence}

In the notation of the definition of Equation \ref{eq:full_ODE}.
The Jacobian of the dynamical system is given by
\begin{eqnarray*}
\nabla_{\vec{\mu}}\bar{\vec{f}} & = & -\frac{1}{T}\left(Q^{-1}-Q^{-1}\vec{1}\vec{\mu}^{\tp}-(\vec{\mu}^{\tp}Q^{-1}\vec{1})I\right)\\
\nabla_{T}\bar{\vec{f}} & = & \frac{1}{T^{2}}\left(\vec{\mu}^{\tp}Q^{-1}-(\vec{\mu}^{\tp}Q^{-1}\vec{1})\vec{\mu}^{\tp}\right)\\
\nabla_{\vec{\mu}}\bar{h} & = & -Q^{-1}\vec{1}\\
\nabla_{T}\bar{h} & = & -1.
\end{eqnarray*}
When evaluated at the stationary point $(\bar{\vec{\mu}},\bar{T})$,
we get the matrix
\[
\left[\begin{array}{cc}
-\bar{\lambda}\left(Q^{-1}-Q^{-1}\vec{1}\bar{\vec{\mu}}^{\tp}-\frac{1}{\bar{\lambda}}I\right) & -Q^{-1}\vec{1}\\
\vec{0} & -1
\end{array}\right].
\]
Using similar techniques as the discrete-time case (given in Appendix
\ref{sub:Proof-of-DT}), we conclude that if $\lambda_{NPV}$ is any
non-principal eigenvalue of $Q$, then the sufficient condition for
CLT becomes
\[
2\lambda_{PV}>Re(\lambda_{NPV}).
\]

\begin{thm}
For the continuous-time algorithm (Equation \ref{eq:ct-alg-formal}),
if $\lambda_{PV}$ is the principal eigenvalue of $Q$ and $\lambda_{NPV}$
are the other eigenvalues and they satisfy
\[
2\lambda_{PV}>Re(\lambda_{NPV}).
\]
Furthermore, we can conclude 
\[
\sqrt{n}(\vec{\mu}_{n}-\vec{\mu})\rightarrow^{d}N(0,V)
\]
for some covariance matrix $V$.
\end{thm}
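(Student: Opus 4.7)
The plan is to invoke the same stochastic-approximation framework used for Theorem~\ref{thm:DTMC-unprojected}, specifically Theorem~10.2.1 of \cite{kushner2003stochastic}. This reduces the $\sqrt{n}$-CLT to a pair of hypotheses: (i) the driving noise is a martingale difference with uniformly bounded second moments, and (ii) the Jacobian $\mathcal{J}$ of the mean-field ODE at the equilibrium $(\bar{\vec{\mu}},\bar{T})$ satisfies the strong Hurwitz condition that $\mathcal{J} + \tfrac{1}{2}I$ is Hurwitz, this being the requirement for the critical step size $\epsilon_{n} = 1/(n+1)$. Condition (i) is immediate here: the state space $S$ is finite, so $\tau$ inherits exponential tails from the sub-Markov semigroup $e^{tQ}$. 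Everything else in the proof carries over verbatim from Appendix~\ref{sub:Proof-of-DT}, so the real work is the linear-algebra step that translates (ii) into the stated bound $2\lambda_{PV} > \mathrm{Re}(\lambda_{NPV})$.

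The Jacobian has been assembled in block form immediately before the theorem statement. Because the cross block vanishes at the stationary point (the $T$-derivative of $\vec{f}$ evaluates to zero after substituting $\bar{T} = -1/\bar{\lambda}$ and $\bar{\vec{\mu}}^{\tp}Q^{-1} = \bar{\lambda}^{-1}\bar{\vec{\mu}}^{\tp}$), the matrix is block triangular and its spectrum splits as $\{-1\}$, from the scalar $T$-direction which trivially clears the Hurwitz bar, together with the spectrum of the top-left block $A$ restricted to the tangent hyperplane of the probability simplex. To diagonalize $A$ on this tangent space, I would take each left eigenvector of $Q$ associated to a non-principal eigenvalue $\lambda_{NPV}$, subtract a scalar multiple of $\bar{\vec{\mu}}$ to enforce the tangency constraint $\vec{1}^{\tp}\vec{w} = 0$, and check directly that the resulting vector is an eigenvector of $A$ with eigenvalue $\bar{\lambda}/\lambda_{NPV} - 1$. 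The rank-one correction $Q^{-1}\vec{1}\bar{\vec{\mu}}^{\tp}$ inside $A$ supplies exactly the shift needed for this cancellation, and the right Perron direction of $Q$ does not enter because it is not tangent to the simplex.

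The Hurwitz bound $\mathrm{Re}(\bar{\lambda}/\lambda_{NPV} - 1) < -1/2$ must then be matched to the stated inequality $2\bar{\lambda} > \mathrm{Re}(\lambda_{NPV})$. Writing $\lambda_{NPV} = a + bi$ and noting that $a < 0$ always holds (since $-Q$ is an irreducible M-matrix arising from a strictly absorbing chain, as used in Lemma~\ref{lem:nonnegative-entries}), the target $\bar{\lambda} a / (a^{2}+b^{2}) < 1/2$ rearranges to $2\bar{\lambda} a < a^{2} + b^{2}$; the hypothesis $2\bar{\lambda} > a$ combined with $a < 0$ gives, upon multiplying by the negative scalar $a$, $2\bar{\lambda} a < a^{2} \leq a^{2}+b^{2}$, and we are done.

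The main point where I expect some care to be needed is the non-diagonalizable case: for each Jordan chain of $Q$ at a non-principal eigenvalue one must verify that $A$ preserves the corresponding generalized eigenspace and that the induced diagonal block contributes the same real part to the spectrum, which can be arranged because $A$ is a polynomial in $Q^{-1}$ plus a rank-one correction that annihilates every non-principal generalized eigenspace. Beyond this, the remaining Kushner--Yin regularity (local Lipschitzness of the mean drift, continuity of the asymptotic conditional covariance of $\vec{Y}_{n}$) is entirely routine given finiteness of $S$ and the exponential tails of $\tau$, so the bulk of the argument is a direct transcription of the discrete-time proof.
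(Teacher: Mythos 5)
Your approach is the same one the paper uses (the paper's continuous-time rate proof is a one-line deferral to the discrete-time argument, so you have in effect written out the details): invoke Theorem 10.2.1 of Kushner--Yin, observe that the Jacobian is block-triangular at $(\bar{\vec{\mu}},\bar{T})$, establish the eigenvalue correspondence $\lambda_{J}=\bar{\lambda}/\lambda_{NPV}-1$ by adding a multiple of $\bar{\vec{\mu}}$ to each non-principal eigenvector of $(Q^{\tp})^{-1}$ to enforce $\vec{1}^{\tp}\vec{w}=0$, and translate the Hurwitz requirement $\mathrm{Re}(\lambda_{J})<-1/2$ using $\mathrm{Re}(\lambda_{NPV})<0$. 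Your algebra is correct; the only observation worth flagging is that your chain $2\lambda_{PV}\mathrm{Re}(\lambda_{NPV})<(\mathrm{Re}\,\lambda_{NPV})^{2}\le|\lambda_{NPV}|^{2}$ has slack when $\mathrm{Im}(\lambda_{NPV})\neq0$, so the stated hypothesis $2\lambda_{PV}>\mathrm{Re}(\lambda_{NPV})$ is sufficient but strictly stronger than the exact Hurwitz requirement $\mathrm{Re}(1/\lambda_{NPV})>1/(2\lambda_{PV})$ (whereas the discrete-time formulation in Theorem~\ref{thm:hurwitz} is sharp); this is harmless for the theorem as stated, and you already note correctly, as the paper does not, that the Jordan-block case needs separate handling.
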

We can easily convert Equation \ref{eq:ct-alg-formal} to the projected
version and similar theorems regarding projection and Polyak-averaging
(Theorems \ref{thm:projection-thm} and \ref{thm:polyak-thm}) hold.

\subsection{Uniformization}

Because these CTMC have finite state space, we can form the associated
uniformized Markov chain. Let $Q$ be the transition rate matrix of
the non-absorbing states and let $\nu=\max_{i}(-q_{ii})$, we can
form a discrete-time transition matrix
\[
\tilde{Q}=I+\frac{1}{\nu}Q.
\]
It is straightforward to verify that any principal left-eigenvector
to $Q$ is also a principal left-eigenvector to $\tilde{Q}$. Hence
we apply the discrete-time algorithm to this DTMC.

\section{Numerical Experiments\label{sec:NumericalExp.}}

\subsection{Loopy Markov Chain}

Let's consider the loopy Markov chain given by the full transition
probability matrix
\[
\left[\begin{array}{ccc}
1 & 0 & 0\\
\epsilon & \frac{1-\epsilon}{2} & \frac{1-\epsilon}{2}\\
\epsilon & \frac{1-\epsilon}{2} & \frac{1-\epsilon}{2}
\end{array}\right].
\]
The eigenvalues of the sub-stochastic matrix are $1-\epsilon$ and
$0$. Hence the sufficient condition for CLT to hold is to require
$\epsilon<0.5$. We tested the original algorithm and the Polyak averaging
algorithm for the case of $\epsilon=0.98$, well outside of the CLT
sufficient condition. The result can be seen in Figure \ref{fig:loopyMC}
where the improved Polyak averaging algorithm significantly outperforms
the vanilla algorithm.

\subsection{M/M/1 queue with finite capacity and absorption}

We also simulated a M/M/1 queue where the system has a queue capacity
as well as an absorbing state when the system is empty. A discrete-time
Markov chain is created when we considered the arrival times of new
customers. The system we have simulated has a capacity of $100$ with
$\rho=1.25$. The expected time to absorption $E(\tau)$ is very large
so we Doeblinized the Markov chain by multiplying the probability
matrix by $0.95$. The Doeblinized Markov chain no longer satisfies
the CLT. You can see in Figure \ref{fig:MM1} that the Polyak averaging
algorithm significantly outperforms the vanilla algorithm.

\subsection{Contact Process on Complete graph\label{sub:Contact-Process}}

We now introduce the contact process. It's a class of models that
fall within the interacting particle systems framework whose quasi-stationary
distribution are important to physicists \cite{dickman,latestoliveira,marro,oliveiradickman1}.
\begin{defn}
A contact process is a continuous-time Markov chain (CTMC)$(X_{1}^{t},...,X_{n}^{t})\in\{0,1\}^{n}$,
where $t\geq0$ is the time, with an associated connected graph $(V,E)$
such that
\begin{itemize}
\item $|V|=n$.
\item Individual nodes transition from $1$ to $0$ at an exponential rate
of $1$.
\item Individual nodes transition from $0$ to $1$ at rate $\lambda r$
where $r$ is the fraction of neighbors that are in state $1$.
\end{itemize}
\end{defn}
This CTMC has $2^{n}$ states. The state $(0,0,\ldots,0)$ is an absorbing
state and the remaining states are all transient.

This CTMC will eventually reach the absorbing state but physicists
are interested in the ``pseudo-equilibrium'' behavior in the period
before absorption happens \cite{dickman,latestoliveira,marro,oliveiradickman1}.
In another words, we need an algorithm for estimating the quasi-stationary
distribution of this process. The difficulty is that the state space
is exponential in size save for a few special cases.

Here we simulate the contact process on a complete graph. If the infection
rate is changed to $1.5$, then each iteration of the algorithm would
take an extreme long time. We applied the version of the algorithm
designed for continuous-time Markov chains and Doeblinized the Markov
chain by subtracting $0.5I$ from the transition rate matrix. The
eigenvalue condition fails resulting in a slow rate of convergence
for the vanilla algorithm. The Polyak's averaging algorithm significantly
outperforms the vanilla algorithm. See Figure \ref{fig:cpcg}.

\section{Discussion and Conclusion}

In summary, we have improved upon the algorithm of \cite{oliveiradickman1}
by recognizing it as a stochastic approximation algorithm as opposed
to an urn process. In doing so, we were able to prove its law of large
number and CLT. The result is stronger than the results given in the
urn process literature\cite{athreya}. Furthermore, we provided a
counterexample that strongly suggests that the sufficient eigenvalues
condition for the CLT is also necessary and fails in many common applications.
An improved algorithm that uses projection and iterate averaging significantly
improves rate of convergence.

We have tested our algorithm on countable state space processes such
as the M/M/1/$\infty$ queue with success. Proving the convergence
of this algorithm in this countable state space setting is currently
an open problem. We're also working on a version of the algorithm
for estimating the quasi-stationary distribution of diffusion processes
using stochastic approximation.

Another open issue is how to pick the best Doeblinization constant.
When $\mathbb{E}[\tau]$ is large you're more likely to satisfy the
condition for the CLT but that's when run-time of the algorithm increases
proportionally. There must be a balance between the run-time of each
tour and the rate of convergence of $\vec{\mu}_{n}$. It is also not
clear what the optimal step size should be for the projected algorithm.

Finally, it would be very interesting to investigate the connection
between the phase transition critical point of contact processes and
its CLT critical point. Unfortunately, preliminary work seems to suggest
that those two are unconnected.

\section*{Acknowledgments}

Support from the NSF foundation through the grants CMMI-0846816 and
CMMI-1069064 is gratefully acknowledged.

\section*{Proof of Main Results}

\subsection{Proof of Discrete-Time Theorem \ref{thm:DTMC-unprojected}\label{sub:Proof-of-DT}}

We first restate a series of assumptions \& notations that is used
by Theorem 5.2.1 from \cite{kushner2003stochastic} which we will
invoke. Again the form of the recursion is $\vec{\theta}_{n+1}=\vec{\theta}_{n}+\epsilon_{n}\vec{W}_{n}$
where $\vec{W}_{n}$ is a martingale difference sequence with respect
to the filtration $\mathscr{F}_{n}$ that at least contains $\sigma(\vec{\theta}_{i},\vec{W}{}_{i-1},i\leq n)$.
Recall that for us, $\vec{W}_{n}$ consists of the two components
$\vec{W}_{n}$ (the probability vector) and $Z_{n}$ (and added time
dimension) defined in Equation \ref{eq:iterates_defined} and $\vec{\theta}_{n}$
consists of $\vec{\mu}_{n}$ and $T_{n}$.
\begin{enumerate}
\item $\epsilon_{n}\downarrow0,\;\sum\epsilon_{n}=\infty,\;\sum\epsilon_{n}^{2}<\infty$.
This is trivially satisfied for our $\epsilon_{n}=\frac{1}{n}$.
\item The observed responses have to have uniformly bounded variance: $\sup_{n}\mathbb{E}|\vec{W}_{n}|^{2}<\infty$.
See Lemma \ref{lem:uniform-boundedness}.
\item (A local-averaging condition) \label{enu:defn-of-g}Let $\vec{g}_{n}(\vec{\mu}_{n},T_{n})\triangleq\mathbb{E}[\vec{W}_{n}|\mathscr{F}_{n}]$.
The functions $\vec{g}_{n}(\vec{\mu},T)$ need to be continuous uniformly
in $n$, and there needs to exist a continuous function $\bar{\vec{g}}(\vec{\mu},T)$
such that for each $(\vec{\mu},T)$
\[
\lim_{n\rightarrow\infty}\left|\sum_{i=n}^{m(t_{n}+t)}\epsilon_{i}[\vec{g}_{i}(\vec{\mu},T)-\bar{\vec{g}}(\vec{\mu},T)]\right|\rightarrow0
\]
for each $t>0$. For the proof see Lemma \ref{lem:local-avging}
\end{enumerate}
Under these assumptions, Theorem 5.2.1 of \cite{kushner2003stochastic}
tells us that if the ODE $\frac{d}{dt}(\vec{\mu}(t),T(t))=\bar{\vec{g}}(\vec{\mu}(t),T(t))$
has an attractor (asymptotically stable point) with domain $A$ and
the sequence $(\vec{\mu}_{n},T_{n})$ visits a compact subset within
the domain infinitely often with probability $1$, then $(\vec{\mu}_{n},T_{n})$
converges to the attractor with probability $1$.

In our situation, it turns out that the entirely probability simplex
is the domain for an attractor situated at the quasi-stationary vector.
We will first compute the functions $\vec{g}_{n}$ and verify condition
3, then the uniformly bounded variance condition 2, and finally the
asymptotic behavior of the associated ODE.

\subsubsection{Local-averaging of the gradient field}
\begin{lem}
\label{lem:local-avging}Given the gradient field $\bar{\vec{g}}=\left(\begin{array}{c}
\bar{\vec{f}}\\
\bar{h}
\end{array}\right)$ defined by components

\begin{eqnarray*}
\bar{\vec{f}}(\vec{\mu},T) & \triangleq & \mathbb{E}_{\vec{\mu},T}\left[\frac{\sum_{l=0}^{\tau-1}\left(I(X_{l}=\cdot)-\vec{\mu}\right)}{T}\right]\\
\bar{h}(\vec{\mu},T) & = & \mathbb{E}_{\vec{\mu},T}[\tau-T]
\end{eqnarray*}
corresponding respectively to the dynamics of $\vec{Y}_{n}$ and $Z_{n}$,
we have for $\vec{g}_{n}(\vec{\mu}_{n},T_{n})=\mathbb{E}[\vec{W}_{n}|\mathscr{F}_{n}]$
\[
\lim_{n\rightarrow\infty}\left|\sum_{i=n}^{m(t_{n}+t)}\epsilon_{i}[\vec{g}_{i}(\vec{\mu},T)-\bar{\vec{g}}(\vec{\mu},T)]\right|\rightarrow0
\]
for each $t>0$ pointwise. Furthermore, $\vec{g}_{n}$ are continuous
uniformly in $n$.\end{lem}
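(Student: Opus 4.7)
The plan is to decompose $\vec{g}_n = (\vec{g}_n^{(\mu)}, h_n)$ according to the $\vec{Y}_n$ and $Z_n$ components of $\vec{W}_n$ and verify each separately. The $Z_n = \tau - T$ component carries no explicit $n$-dependence, so $h_n \equiv \bar{h}$ and the local-averaging sum vanishes identically. Continuity of $\bar{h}(\vec{\mu}, T) = \vec{\mu}^{\tp}(I-Q)^{-1}\vec{1} - T$ in $(\vec{\mu}, T)$ is immediate since it is linear in $\vec{\mu}$ and affine in $T$.

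The heart of the argument lies in a pointwise $O(1/n)$ bound on the discrepancy for the first component. The only difference between $\vec{g}_n^{(\mu)}$ and $\bar{\vec{f}}$ is the denominator $T+\tau/(n+1)$ versus $T$. Using the elementary identity
\[
\frac{1}{T+\tau/(n+1)} - \frac{1}{T} = -\frac{\tau/(n+1)}{T\,(T+\tau/(n+1))}
\]
and the crude bound $\bigl|\sum_{l=0}^{\tau-1}(\mathbb{I}(X_l = \cdot) - \vec{\mu})\bigr| \leq 2\tau$, I would obtain
\[
\bigl|\vec{g}_n^{(\mu)}(\vec{\mu},T) - \bar{\vec{f}}(\vec{\mu},T)\bigr| \;\leq\; \frac{2\,\mathbb{E}_{\vec{\mu}}[\tau^2]}{T^{2}(n+1)}.
\]
Since the chain is irreducible and absorbing on a finite state space, $\tau$ has a geometric tail that is uniform over the simplex of initial distributions, so $\sup_{\vec{\mu}} \mathbb{E}_{\vec{\mu}}[\tau^2] < \infty$. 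With $\epsilon_i = 1/i$ and $T \geq 1$, the averaging sum is dominated by $C\sum_{i=n}^{\infty} i^{-2} = O(1/n)$, which verifies the local-averaging condition.

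For uniform-in-$n$ continuity of $\vec{g}_n$, I would write
\[
\vec{g}_n^{(\mu)}(\vec{\mu},T) = \sum_i \mu_i\, \mathbb{E}_{i}\!\left[\frac{\sum_{l=0}^{\tau-1}(\mathbb{I}(X_l = \cdot) - \vec{\mu})}{T + \tau/(n+1)}\right]
\]
and bound the partial derivatives in $(\vec{\mu}, T)$ uniformly in $n$. The $\vec{\mu}$-derivatives consist of bounded weights multiplied by first and second moments of $\tau$; the $T$-derivative brings down at most a factor $1/(T+\tau/(n+1))^2 \leq 1/T^2$, which is controlled once we restrict to the invariant regime $T \geq 1$ guaranteed by $T_0 \geq 1$ (a quick induction on the recursion shows this region is forward-invariant since $\tau \geq 1$ almost surely). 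Equicontinuity then follows from the mean-value theorem applied to these uniformly bounded gradients. The main obstacle in the whole argument is making the domination steps uniform over $\vec{\mu}$ in the simplex; this is resolved by the uniform geometric tail for $\tau$, which gives uniform bounds on $\mathbb{E}_{\vec{\mu}}[\tau^k]$ for every $k$.
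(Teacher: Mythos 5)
Your proof is correct and arrives at the same conclusion as the paper, but it is genuinely more direct and quantitative. The paper's proof of this lemma is \emph{soft}: it observes that the integrand $\vec{Y}_n(\cdot,\vec{\mu},T)$ converges pathwise to the limit expression as $n\to\infty$, invokes the dominated convergence theorem to conclude $\vec{f}_n(\vec{\mu},T)\to\bar{\vec{f}}(\vec{\mu},T)$ pointwise, and then uses the Kushner--Yin estimate $\sum_{i=n}^{m(t_n+t)}\epsilon_i\approx t$ to bound the averaging sum by $t\max_{n\le i\le m(t_n+t)}|\vec{f}_i-\bar{\vec{f}}|\to 0$. You instead isolate the \emph{only} source of $n$-dependence --- the $\tau/(n+1)$ shift in the denominator --- and use the elementary identity $\frac{1}{T+\tau/(n+1)}-\frac{1}{T}=-\frac{\tau/(n+1)}{T(T+\tau/(n+1))}$ together with uniform geometric tails of $\tau$ to get the explicit bound $|\vec{g}_n^{(\mu)}-\bar{\vec{f}}|\le C_{\vec{\mu},T}/(n+1)$. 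With $\epsilon_i\asymp 1/i$ this gives $\sum_{i\ge n}i^{-2}=O(1/n)$, strictly stronger than what the soft argument yields, and in fact this is essentially the content the paper later needs anyway in Lemma \ref{lem:rate-of-gn} to prove tightness. For the uniform-in-$n$ continuity, the paper defers to Lemma \ref{lem:uniformness-of-error}, which is a fairly heavy Laplace-transform computation establishing uniformly bounded second partials; your sketch via mean-value theorem on the first partials (with the $T\ge 1$ invariance to control the $1/T^2$ factor) is simpler and adequate for the equicontinuity claim as stated. Both routes are valid; yours buys a clean quantitative rate and avoids invoking DCT, while the paper's DCT argument is shorter to state once the heavy lemma is already in place for the CLT section.
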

\begin{proof}
We treat the $\vec{\mu}_{n}$ components and the $T_{n}$ component
of Equation \ref{eq:mainalg_formal} separately. Let us first define
and compute $\vec{f}_{n}(\vec{\mu}_{n},T_{n})\triangleq\mathbb{E}_{\vec{\mu}_{n},T_{n}}[\vec{Y}_{n}|\mathscr{F}_{n}]$. 

It is clear that $\vec{Y}_{n}(x,\vec{\mu}_{n},T_{n})\rightarrow_{n\rightarrow\infty}\frac{\sum_{l=0}^{\tau^{(n+1)}-1}\left(\mathbb{I}(X_{l}^{n+1}=x)-\vec{\mu}_{n}(x)\right)}{T_{n}}$
where $x$ is a component of the vector $\vec{Y}_{n}$ and $\vec{\mu}_{n}$,
$T_{n}$ are fixed arguments. We can apply the dominated convergence
theorem to arrive at the conclusion
\[
\vec{f}_{n}(\vec{\mu},T)=\mathbb{E}[\vec{Y}_{n}|\mathscr{F}_{n},\vec{\mu}_{n}=\vec{\mu},T_{n}=T]\rightarrow_{n\rightarrow\infty}\mathbb{E}_{\vec{\mu},T}\left[\frac{\sum_{l=0}^{\tau-1}\left(\mathbb{I}(X_{l}=\cdot)-\vec{\mu}\right)}{T}\right].
\]
Let's define the limit to be $\bar{\vec{f}}(\vec{\mu},T)$
\[
\bar{\vec{f}}(\vec{\mu},T)\triangleq\mathbb{E}_{\vec{\mu},T}\left[\frac{\sum_{l=0}^{\tau-1}\left(\mathbb{I}(X_{l}=\cdot)-\vec{\mu}\right)}{T}\right].
\]

We now have
\begin{align*}
\lim_{n\rightarrow\infty}\left|\sum_{i=n}^{m(t_{n}+t)}\epsilon_{i}\left(\vec{f}_{i}(\vec{\mu},T)-\bar{\vec{f}}(\vec{\mu},T)\right)\right| & \leq\lim_{n\rightarrow\infty}\sum_{i=n}^{m(t_{n}+t)}\left|\epsilon_{i}(\vec{f}_{i}(\vec{\mu},T)-\bar{\vec{f}}(\vec{\mu},T))\right|\\
 & \leq\lim_{n\rightarrow\infty}t\max_{n\leq i\leq m(t_{n}+t)}|\vec{f}_{i}(\vec{\mu},T)-\bar{\vec{f}}(\vec{\mu},T)|\rightarrow0.
\end{align*}

For the $T_{n}$ component, define $\mathbb{E}[Z_{n}|T_{n},\vec{\mu}_{n}]=\mathbb{E}_{\vec{\mu}_{n}}[\tau-T_{n}]\triangleq\bar{h}(\vec{\mu}_{n,}T_{n})$.
This field is independent of $n$, hence it trivially satisfies the
above ``local averaging'' condition.

If we look at the expansion in Lemma \ref{lem:uniformness-of-error},
it is clear that the Jacobian $D\vec{f}_{n}$ would be uniformly bounded
in $n$ for local neighborhoods around each point $(\vec{\mu},T)$,
hence $\vec{f}_{n}$ would be continuous uniformly in $n$. 
\end{proof}

\subsubsection{Uniformly bounded variance}
\begin{lem}
\label{lem:uniform-boundedness}$\sup_{n}\mathbb{E}|\vec{W}_{n}|^{2}<\infty$
for the unprojected algorithm
\end{lem}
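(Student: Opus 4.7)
The plan is to reduce the statement to a uniform second-moment bound on the absorption times $\tau^{(n+1)}$, and to bound the algorithmic denominator $T_n + \tau^{(n+1)}/(n+1)$ away from zero so that the apparent $1/T_n$ singularity in $\vec Y_n$ does not cause trouble.

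First I would dispose of the denominator. Since we start at a non-absorbing state, $\tau^{(j)}\geq 1$ almost surely, and the recursion $T_{n+1}=T_n+\tfrac{1}{n+2}(\tau^{(n+1)}-T_n)$ is a convex combination of $T_n$ and $\tau^{(n+1)}$; hence if $T_0\geq 1$, an easy induction gives $T_n\geq 1$ for all $n$. Consequently the denominator appearing in $\vec Y_n$ satisfies $T_n+\tau^{(n+1)}/(n+1)\geq 1$, and the numerator of each component is a sum of at most $\tau^{(n+1)}$ terms each bounded by $1$ in absolute value. Thus $|\vec Y_n|\leq C\,\tau^{(n+1)}$ for a constant $C$ depending only on the dimension, and similarly $|Z_n|\leq \tau^{(n+1)}+T_n$. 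The problem therefore reduces to two estimates: $\sup_n\mathbb{E}[(\tau^{(n+1)})^2]<\infty$ and $\sup_n\mathbb{E}[T_n^2]<\infty$.

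Second, I would establish the uniform second-moment bound on $\tau^{(n+1)}$. The chain is finite-state and irreducible on the transient part with a reachable absorbing state, so there exist $n_0\in\mathbb{N}$ and $\delta>0$ such that for every transient starting state the probability of absorption within $n_0$ steps is at least $\delta$. This stochastically dominates $\tau$ by $n_0\cdot G$, where $G$ is geometric with parameter $\delta$, and this bound is uniform over the initial distribution. In particular $\sup_{\vec\mu}\mathbb{E}_{\vec\mu}[\tau^p]<\infty$ for every $p\geq 1$, and since $\tau^{(n+1)}$ is generated from the (random) initial distribution $\vec\mu_n$, taking expectations yields $\sup_n\mathbb{E}[(\tau^{(n+1)})^2]<\infty$. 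For $T_n$, the identity $T_n=\frac{1}{n+1}\sum_{j=0}^n\tau^{(j)}$ combined with Cauchy--Schwarz gives $T_n^2\leq \frac{1}{n+1}\sum_{j=0}^n (\tau^{(j)})^2$, whence $\mathbb{E}[T_n^2]\leq \sup_j\mathbb{E}[(\tau^{(j)})^2]<\infty$.

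Combining these pieces, $\mathbb{E}|\vec W_n|^2\leq C'\bigl(\mathbb{E}[(\tau^{(n+1)})^2]+\mathbb{E}[T_n^2]\bigr)$, which is bounded uniformly in $n$. The only genuine subtlety is the geometric-tail argument for $\tau$; everything else is a deterministic lower bound on $T_n$ plus Cauchy--Schwarz. I would expect the geometric domination of $\tau$ to be the main technical step, but on a finite irreducible transient class it is essentially folklore and can be made precise by taking $n_0$ to be the diameter of the transient class plus one and $\delta$ to be the minimum over transient states of the probability of reaching absorption in $n_0$ steps.
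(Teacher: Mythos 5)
Your proof is correct and takes essentially the same route as the paper: drop the denominator in $\vec{Y}_n$ using $T_n\geq 1$, then reduce to a uniform bound on $\mathbb{E}_{\vec{\mu}}[\tau^2]$ over initial distributions. Your geometric-domination argument and the paper's bound $\mathbb{E}_{\vec{\mu}_n}(\tau^2)\leq \vec{\mu}_n^{\tp}\bigl(\sum_{l\geq 0}Q^{\lfloor\sqrt{l}\rfloor}\bigr)\vec{1}$ followed by an integral test are two phrasings of the same fact, namely that the absorption time of a finite irreducible substochastic chain has a geometric tail uniformly in the initial law. The one genuine difference is the $Z_n$ component. The paper asserts $\mathbb{E}_{\vec{\mu}_n,T_n}(\tau-T_n)^2\leq\mathbb{E}_{\vec{\mu}_n}(\tau^2)$ ``because $T_n$ is non-negative,'' but that inequality holds only when $T_n\leq 2\,\mathbb{E}_{\vec{\mu}_n}[\tau]$, and $T_n$ is not a priori so bounded. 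Your decomposition $|Z_n|\leq\tau^{(n+1)}+T_n$ combined with the Cauchy--Schwarz estimate $T_n^2\leq\frac{1}{n+1}\sum_{j=0}^{n}(\tau^{(j)})^2$, hence $\mathbb{E}[T_n^2]\leq\sup_j\mathbb{E}[(\tau^{(j)})^2]$, is the careful version of this step and closes the gap; it buys a fully rigorous control of the $T_n$ coordinate at essentially no extra cost.
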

\begin{eqnarray*}
\mathbb{E}[|\vec{Y}_{n}|^{2}|\vec{\mu}_{n},T_{n}] & \leq & \mathbb{E}_{\vec{\mu}_{n},T_{n}}\left[\left|\sum_{l=0}^{\tau-1}(\mathbb{I}(X_{l}=\cdot)-\vec{\mu}_{n})\right|^{2}\right]\\
 & \leq & \mathbb{E}_{\vec{\mu}_{n}}(\tau^{2})\\
 & \leq & \sum_{n\geq0}\mathbb{P}_{\vec{\mu}_{n}}(\tau>\sqrt{n})\\
 & \leq & \vec{\mu}_{n}^{\tp}\left(\sum_{l=0}^{\infty}Q^{\left\lfloor \sqrt{l}\right\rfloor }\right)\vec{1}.
\end{eqnarray*}

\begin{proof}
Now, $\mathbb{E}[|\vec{Y}_{n}|^{2}]=\mathbb{E}\left[\mathbb{E}[|\vec{Y}_{n}|^{2}|\vec{\mu}_{n},T_{n}]\right]=\mathbb{E}(\vec{\mu}_{n})^{\tp}\left(\sum_{l=0}^{\infty}Q^{\sqrt{l}}\right)\vec{1}$.
The infinite sum can be shown to be convergence by an integral test.
Since $\mathbb{E}(\vec{\mu}_{n})$ is a vector in the probability
simplex, which is compact, it is bounded from above.

For the second $T_{n}$ component, we have $\mathbb{E}[Z_{n}^{2}|\vec{\mu}_{n},T_{n}]=\mathbb{E}_{\vec{\mu}_{n},T_{n}}(\tau-T_{n})^{2}\leq\mathbb{E}_{\vec{\mu}_{n}}(\tau^{2})$
because $T_{n}$ is non-negative. Following the argument above, this
is also bounded in $n$.
\end{proof}

\subsubsection{The dynamical system}

Lemma \ref{lem:local-avging} show that the dynamical system of interest
has gradient field $\overline{\vec{g}}$ consisting of
\begin{eqnarray}
\dot{\vec{f}}(\vec{\mu},T) & \triangleq & \mathbb{E}_{\vec{\mu},T}\left[\frac{\sum_{l=0}^{\tau-1}\left(\mathbb{I}(X_{l}=\cdot)-\vec{\mu}\right)}{T}\right]\label{eq:full_ODE}\\
\dot{\vec{h}}(\vec{\mu},T) & = & \mathbb{E}_{\vec{\mu},T}[\tau-T].\nonumber 
\end{eqnarray}
After some expansion, they become (writing $\vec{\mu}$ as a vector
ODE)
\begin{align*}
\dot{\vec{\mu}}(t)^{\tp} & =\frac{1}{T}\left[(\vec{\mu}(t)^{\tp}(I-Q)^{-1}-(\vec{\mu}(t)^{\tp}(I-Q)^{-1}\vec{1})\vec{\mu}^{\tp}(t)\right]\\
\dot{T}(t) & =\vec{\mu}(t)^{\tp}(I-Q)^{-1}\vec{1}-T(t).
\end{align*}

In the proof of Theorem 5.2.1 of \cite{kushner2003stochastic}, the
subsequence limit $\theta(\cdot,\omega)$ is a solution to the above
ODE. We only need to prove that these solutions converge to the quasi-stationary
distribution. The strategy is to prove the asymptotic limit of all
solutions of a reduced ODE starting in $H$ is the quasi-stationary
distribution, and then show that these particular solutions (subsequence
limits of $\theta(\cdot,\omega)$) of the full ODE (Equation \ref{eq:full_ODE})
can be converted into solutions for the reduced ODE. Finally we combine
these and show that these subsequence solutions that the iterates
$(\mu_{n},T_{n})$ visits a compact subset of $H\times(0,\infty)$
infinitely often almost surely.

The reduced ODE is
\begin{align}
\dot{\vec{\nu}}(t) & =\vec{\nu}(t)^{\tp}(I-Q)^{-1}-(\vec{\nu}(t)^{\tp}(I-Q)^{-1}\vec{1})\vec{\nu}(t)^{\tp}\label{eq:reduced_ODE}\\
\vec{\nu}(0) & =\vec{\mu}_{0}.\nonumber 
\end{align}

For convenience, first define
\begin{defn}
$\Gamma(t)=\int_{0}^{t}\frac{1}{T(s)}ds$

It is not hard to see that $\vec{\mu}(\Gamma^{-1}(t))$ is a solution
to the reduced ODE. The following Lemma ensures that the inverse is
well defined.\end{defn}
\begin{lem}
$\Gamma(t)$ is non-negative, increasing, and goes to $\infty$\label{lem:Gamma-lemma}\end{lem}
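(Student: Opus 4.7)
The plan is to show that $T(s)$ is bounded both above and below by strictly positive constants on $[0,\infty)$; the three claims about $\Gamma$ then follow immediately because $1/T(s)$ will be bounded away from $0$ and from $+\infty$. The only ingredients are the explicit ODE $\dot T(s) = \vec{\mu}(s)^{\tp}(I-Q)^{-1}\vec{1} - T(s)$, the invariance of the probability simplex $H$ for the $\vec{\mu}$-component (established in the preceding existence/uniqueness analysis), and elementary Gronwall-style comparisons.

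First I would obtain the lower bound $T(s)\geq 1$. Since $Q$ is entrywise non-negative and substochastic, $(I-Q)^{-1} = \sum_{k\geq 0} Q^{k}$ has non-negative entries and its $i$-th row sum equals $\mathbb{E}_{i}[\tau]$, which is at least $1$ because $X_{0}\in T$ forces $\tau\geq 1$. Hence $\vec{\mu}^{\tp}(I-Q)^{-1}\vec{1}\geq 1$ for every probability vector $\vec{\mu}$ on $T$, and the $T$-ODE yields $\dot T(s)\geq 1 - T(s)$. Combined with $T_{0}\geq 1$, a standard comparison argument (equivalently, noting that $(T(s)-1)e^{s}$ is non-decreasing) gives $T(s)\geq 1$ for all $s\geq 0$.

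Next I would obtain the upper bound. Because $\vec{\mu}(s)$ remains in the compact simplex $H$ and the map $\vec{\mu}\mapsto \vec{\mu}^{\tp}(I-Q)^{-1}\vec{1}$ is continuous, the quantity $M := \max_{\vec{\mu}\in H}\vec{\mu}^{\tp}(I-Q)^{-1}\vec{1}$ is finite. Then $\dot T(s)\leq M - T(s)$, and the symmetric comparison $\tfrac{d}{ds}\bigl((T(s)-M)e^{s}\bigr)\leq 0$ yields $T(s)\leq \max(T_{0},M)$ for all $s\geq 0$.

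Setting $M^{\star}:=\max(T_{0},M)$, we have $1\leq T(s)\leq M^{\star}$, so $1/T(s)\in[1/M^{\star},\,1]\subset(0,\infty)$. Strict positivity of the integrand gives non-negativity of $\Gamma$ and, since $1/T(s)>0$, strict monotonicity; the uniform lower bound $1/T(s)\geq 1/M^{\star}>0$ gives $\Gamma(t)\geq t/M^{\star}\to\infty$ as $t\to\infty$. The only delicate point in the argument is the tacit use of simplex-invariance for $\vec{\mu}(s)$, but this is exactly what the preceding Duhamel-style analysis of the coupled ODE supplies; the rest is routine scalar ODE comparison.
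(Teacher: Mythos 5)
Your proof is correct, and it takes a genuinely different route from the paper's. You establish the two-sided bound $1\le T(s)\le M^{\star}$ via direct Gronwall-type comparisons on the scalar ODE $\dot T=\vec{\mu}^{\tp}(I-Q)^{-1}\vec{1}-T$, using only that $\vec{\mu}(s)$ stays in the compact simplex and that $\vec{\mu}^{\tp}(I-Q)^{-1}\vec{1}\in[1,M]$ there; the divergence $\Gamma(t)\ge t/M^{\star}\to\infty$ then falls out immediately with an explicit rate. The paper instead argues the divergence by contradiction: it writes the Duhamel solution formula $T(t)=T_{0}\exp\bigl(\int_{0}^{t}\mathbb{E}_{\vec{\mu}(s)}[\tau]/T(s)\,ds-t\bigr)$, assumes $\int_{0}^{\infty}1/T(s)\,ds<\infty$, deduces that $T$ is then bounded above, and observes this forces $\int_{0}^{\infty}1/T(s)\,ds=\infty$ — a contradiction — while treating the non-negativity and monotonicity of $\Gamma$ as trivial consequences of $T>0$ established elsewhere. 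Your version is more self-contained (it proves the positivity of $T$ inside the lemma rather than importing it) and more quantitative (linear lower bound on $\Gamma$), at the cost of needing both a lower and an upper envelope for $T$; the paper's contradiction argument needs only the Duhamel formula and the single finiteness $\sup_{\vec{\mu}\in H}\mathbb{E}_{\vec{\mu}}[\tau]<\infty$, which is the same quantity as your $M$. Both are sound.
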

\begin{proof}
The increasing part is trivial because $T(s)$ is strictly positive
(in both the discrete-time and continuous-time cases). Let's assume
that $\int_{0}^{\infty}\frac{1}{T(s)}<\infty$. This implies 
\begin{eqnarray*}
T(t) & = & T_{0}\exp\left(\int_{0}^{t}\left[\frac{\mathbb{E}_{\vec{\mu}(s)}(\tau)}{T(s)}\right]ds-t\right)\\
 & \leq & T_{0}\exp\left(\sup_{\vec{\mu}\in H}\mathbb{E}_{\vec{\mu}}(\tau)\int_{0}^{t}\frac{1}{T(s)}ds-t\right)\\
 & < & T_{0}\exp(K\int_{0}^{t}\frac{1}{T(s)}ds)\\
 & < & \tilde{K}\quad\forall t.
\end{eqnarray*}
This means $\frac{1}{T(t)}\geq\tilde{K}\Rightarrow\int_{0}^{\infty}\frac{1}{T(s)}=\infty$
a contradiction.
\end{proof}
$\Gamma^{-1}(0)=0$ so $\vec{\mu}(\Gamma^{-1}(0))=\vec{\mu}(0)\in H$.
Now let's analyze the asymptotic behavior of any such solution $\vec{\nu}(t)$
to Equation \ref{eq:reduced_ODE}.

\begin{lem}
Given any solution to the reduced ODE \ref{eq:reduced_ODE} such that
$\vec{\nu}(0)\in H$, they converge to the quasi-stationary distribution
$\bar{\vec{\mu}}$.\label{lem:convergence}\end{lem}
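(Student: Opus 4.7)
The plan is to reduce the nonlinear ODE (\ref{eq:reduced_ODE}) to a linear one by a projective change of variables, and then invoke Perron-Frobenius asymptotics. First I would establish two preliminary facts: (i) the simplex $H$ is forward-invariant for the flow, which follows from computing $\frac{d}{dt}(\vec{\nu}(t)^{\tp}\vec{1}) = (\vec{\nu}(t)^{\tp}A\vec{1})(1-\vec{\nu}(t)^{\tp}\vec{1})$ with $A\triangleq(I-Q)^{-1}$, so $\vec{\nu}^{\tp}\vec{1}\equiv 1$; and (ii) the entries of $\vec{\nu}(t)$ stay non-negative, by a Duhamel-type representation analogous to the one sketched earlier in the (commented-out) existence lemma.

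Next I would introduce $\vec{w}(t)^{\tp}\triangleq\vec{\nu}(0)^{\tp}e^{At}$ and set
\begin{equation*}
\vec{\nu}_*(t)^{\tp}\triangleq\frac{\vec{w}(t)^{\tp}}{\vec{w}(t)^{\tp}\vec{1}}.
\end{equation*}
A direct computation shows $\dot{\vec{\nu}}_*^{\tp} = \vec{\nu}_*^{\tp}A - (\vec{\nu}_*^{\tp}A\vec{1})\vec{\nu}_*^{\tp}$, so $\vec{\nu}_*$ solves the reduced ODE with $\vec{\nu}_*(0)=\vec{\nu}(0)$. By uniqueness (the right-hand side is smooth on a neighborhood of the simplex), $\vec{\nu}(t)=\vec{\nu}_*(t)$ for all $t\ge 0$.

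The key analytic step is then Perron-Frobenius for $A$. Since $Q$ is irreducible and substochastic, $A=\sum_{k\ge 0}Q^k$ has strictly positive diagonal entries and is irreducible, hence \emph{aperiodic} and primitive. Its Perron eigenvalue is $\rho_A=1/(1-\lambda_{PV})$, simple and strictly dominant in modulus, with the normalized left eigenvector $\bar{\vec{\mu}}$ (the quasi-stationary distribution) and a strictly positive right eigenvector $\bar{\vec{v}}$. Standard spectral asymptotics give
\begin{equation*}
e^{-\rho_A t}\,e^{At}\;\longrightarrow\;\bar{\vec{v}}\,\bar{\vec{\mu}}^{\tp}\quad\text{as }t\to\infty,
\end{equation*}
where the normalization can be chosen so that $\bar{\vec{\mu}}^{\tp}\vec{1}=1$. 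Since $\vec{\nu}(0)\in H$ has non-negative entries summing to $1$ and $\bar{\vec{v}}>0$ componentwise, the scalar $\vec{\nu}(0)^{\tp}\bar{\vec{v}}$ is strictly positive, so both numerator and denominator in the expression for $\vec{\nu}_*(t)^{\tp}$ can be rescaled by $e^{-\rho_A t}$ to yield
\begin{equation*}
\vec{\nu}(t)^{\tp}=\frac{e^{-\rho_A t}\vec{\nu}(0)^{\tp}e^{At}}{e^{-\rho_A t}\vec{\nu}(0)^{\tp}e^{At}\vec{1}}\;\longrightarrow\;\frac{(\vec{\nu}(0)^{\tp}\bar{\vec{v}})\,\bar{\vec{\mu}}^{\tp}}{(\vec{\nu}(0)^{\tp}\bar{\vec{v}})\,\bar{\vec{\mu}}^{\tp}\vec{1}}=\bar{\vec{\mu}}^{\tp},
\end{equation*}
which is the desired conclusion.

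I expect the main obstacle to be not the Perron-Frobenius step (which is standard once aperiodicity of $A$ is observed) but rather the justification that the change of variables $\vec{\nu}_*$ is indeed the unique solution starting at $\vec{\nu}(0)$, so that the projective formula really describes every trajectory we care about. The other delicate point is confirming $\vec{\nu}(0)^{\tp}\bar{\vec{v}}>0$ for all initial data in $H$; this is immediate from the strict positivity of $\bar{\vec{v}}$ plus the fact that a probability vector cannot be identically zero, but it must be stated explicitly because the limit would otherwise be indeterminate.
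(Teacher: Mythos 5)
Your proof is correct and takes essentially the same approach as the paper: the paper obtains the same closed-form solution for the reduced ODE (via a Duhamel representation, which after inner-producting with $\vec{1}$ collapses to exactly your projective formula $\vec{\nu}(t)^{\tp}=\vec{\nu}(0)^{\tp}e^{At}/\vec{\nu}(0)^{\tp}e^{At}\vec{1}$) and then applies Perron--Frobenius asymptotics to $\vec{\nu}(0)^{\tp}e^{At-\beta t}$. Your presentation is marginally cleaner in two spots — you verify the projective ansatz directly instead of manipulating the integral term, and you invoke the spectral limit $e^{-\rho_{A}t}e^{At}\to\bar{\vec{v}}\bar{\vec{\mu}}^{\tp}$ for real $t$ in one step, whereas the paper first proves convergence along integers and then argues continuity to extend to the reals — but the underlying ideas (closed form, aperiodicity of $A=(I-Q)^{-1}$ regardless of the periodicity of $Q$, PF dominance, cancellation in the ratio) are identical.
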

\begin{proof}
If $\vec{v}(0)\in H$, then the entire trajectory stays in $H$. Define
$A\triangleq(I-Q)^{-1}$ . By the Duhamel's principal, all solutions
to $\vec{\nu}$ can be represented by 
\[
\vec{\nu}(t)^{\tp}=\vec{v}(0)^{\tp}\exp\left(At-\int_{0}^{t}(\vec{\nu}(s)^{\tp}A\vec{1})ds\right).
\]
Because $A=\sum_{n=0}^{\infty}(Q)^{n}$ is a matrix with only non-negative
entries and $\vec{\nu}(0)\geq\vec{0}$, we have $\vec{\nu}(t)=\vec{v}(0)^{\tp}\exp(At)\exp\left(-\int_{0}^{t}(\vec{\nu}(s)A\vec{1}^{\tp})ds\right)\geq0$.
Along with $\vec{1}^{\tp}\vec{\nu}(0)=1$, we know that $\vec{\nu}(t)$
belongs to the simplex. The gradient field is continuously differntiable
over the simplex which is compact. Hence there exists unique solutions
to the ODE.

Rearranging the equation gives

\begin{eqnarray}
\vec{\nu}(t)^{\tp}\exp\left(\int_{0}^{t}(\vec{\nu}(s)^{\tp}A\vec{1})ds\right) & = & \vec{\nu}(0)^{\tp}\exp(At)\\
\vec{\nu}(t)^{\tp}\exp\left(\int_{0}^{t}(\vec{\nu}(s)^{\tp}A\vec{1})ds-\beta t\right) & = & \vec{v}(0)^{\tp}\exp(At-\beta t).\label{eq:lemmaeq}
\end{eqnarray}

Here, $\beta$ denotes the Perron-Frobenius eigenvalue for $A\triangleq(I-Q)^{-1}$.
Notice that regardless of the periodicity assumption on $Q$, $(I-Q)^{-1}$
is a strictly positive matrix. Consequently $\frac{e^{A}}{e^{\beta}}$
is a strictly positive matrix with spectral radius $1$. By Perron-Frobenius
theorem (\cite{karlin_taylor} Appendix Theorem 2.1), we have that
for some $\vec{w}$
\[
\vec{v}(0)^{\tp}\exp(An-\beta n)\rightarrow_{t\rightarrow\infty}\vec{w}
\]
where $\vec{w}$ is a multiple of the Perron-Frobenius eigenvector
of the matrix $A$. Because $(A-\beta)/m$ is also a matrix with the
same eigenvector, the above convergence will also hold along sequences
$\frac{n}{m}$ for fixed $m$ as $n\rightarrow\infty$. The exponential
is an uniformly continuous function in this case, so the convergence
also holds along the real numbers as $t\rightarrow\infty$.

Now, take inner product of Equation \ref{eq:lemmaeq} with $\vec{1}$
to obtain
\[
\exp\left(\int_{0}^{t}(\vec{\nu}(s)^{\tp}A\vec{1})ds-\beta t\right)\rightarrow_{t\rightarrow\infty}\gamma\triangleq<\vec{w},\vec{1}>.
\]
We now rewrite the original representation in the following way
\begin{eqnarray*}
\vec{\nu}(t) & = & \vec{\nu}(0)^{\tp}\exp\left(At-\int_{0}^{t}(\vec{\nu}(s)^{\tp}A\vec{1})ds\right)\\
 & = & \vec{\nu}(0)^{\tp}\exp(At-\beta t)\exp(-(\int_{0}^{t}(\vec{\nu}(s)^{\tp}A\vec{1})ds-\beta t))\\
 & \rightarrow_{t\rightarrow\infty} & \frac{\vec{w}}{\gamma}.
\end{eqnarray*}

Now the limit is a normalized quasi-stationary vector. The last fact
that finishes the Lemma is that the Perron-Frobenius eigenvector of
$A$ and $Q$ are identical so $\vec{\nu}(t)$ converges to the quasi-stationary
distribution of $Q$.\end{proof}
\begin{thm}
Any solution (if exists) solving 
\begin{align*}
\dot{\vec{\mu}}^{\tp}(t) & =\frac{1}{T}\left[(\vec{\mu}(t)^{\tp}(I-Q)^{-1}-(\vec{\mu}(t)^{\tp}(I-Q)^{-1}\vec{1})\vec{\mu}(t)^{\tp}\right]\\
\dot{T}(t) & =\vec{\mu}(t)^{\tp}(I-Q)^{-1}\vec{1}-T(t)
\end{align*}
with initial conditions $\vec{\mu}_{0}\in H$ and $T_{0}>0$ converges
to the quasi-stationary distribution in $\vec{\mu}$ and $E_{\bar{\vec{\mu}}}(\tau)=\frac{1}{1-\lambda}$
in $T$ where $\lambda$ is the principal eigenvalue of $Q$. The
random iterates $(\vec{\mu}_{n},T_{n})$ visits a compact subset (might
depend on $\omega$) of this attractor space ($H\times(0,\infty)$)
almost always. This implies that $\vec{\mu}_{n}\rightarrow\bar{\vec{\mu}}$
and $T_{n}\rightarrow\frac{1}{1-\lambda}$ with probability one.\end{thm}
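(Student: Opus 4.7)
The plan is to split the claim into a deterministic part (convergence of the ODE) and a stochastic part (convergence of the iterates), and to handle the deterministic part via a time-change reduction to the already-analyzed ODE \ref{eq:reduced_ODE}. Introduce $\Gamma(t) = \int_{0}^{t} \frac{1}{T(s)}\,ds$; Lemma \ref{lem:Gamma-lemma} guarantees $\Gamma$ is strictly increasing with $\Gamma(\infty) = \infty$, so $\Gamma^{-1}:[0,\infty)\to[0,\infty)$ is globally defined. Setting $\vec{\nu}(s) \triangleq \vec{\mu}(\Gamma^{-1}(s))$, the inverse function theorem gives $(\Gamma^{-1})'(s) = T(\Gamma^{-1}(s))$, and the chain rule shows this factor of $T$ exactly cancels the $\frac{1}{T}$ in the $\vec{\mu}$-equation, so $\vec{\nu}$ solves the reduced ODE \ref{eq:reduced_ODE} with $\vec{\nu}(0) = \vec{\mu}_0 \in H$. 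Lemma \ref{lem:convergence} then gives $\vec{\nu}(s) \to \bar{\vec{\mu}}$, and composing with $\Gamma(t) \to \infty$ yields $\vec{\mu}(t) \to \bar{\vec{\mu}}$.

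Once $\vec{\mu}(t) \to \bar{\vec{\mu}}$, the $T$-equation $\dot T(t) = c(t) - T(t)$ has forcing $c(t) \triangleq \vec{\mu}(t)^{\tp}(I-Q)^{-1}\vec{1}$ converging to $\bar{\vec{\mu}}^{\tp}(I-Q)^{-1}\vec{1} = \sum_{k\geq 0}\bar{\vec{\mu}}^{\tp}Q^{k}\vec{1} = \sum_{k\geq 0}\lambda^{k} = \frac{1}{1-\lambda}$, using $\bar{\vec{\mu}}^{\tp}Q = \lambda\bar{\vec{\mu}}^{\tp}$ and $\bar{\vec{\mu}}^{\tp}\vec{1}=1$. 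Variation of parameters gives $T(t) = T_0 e^{-t} + \int_0^t e^{-(t-s)}c(s)\,ds$, and a routine split of the convolution integral (tail where $c(s)$ is close to its limit, initial piece whose contribution decays exponentially) yields $T(t) \to \frac{1}{1-\lambda}$.

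For the stochastic iterates I would invoke Theorem 5.2.1 of \cite{kushner2003stochastic}. Its hypotheses are already established: the martingale-difference structure holds by construction, Lemma \ref{lem:uniform-boundedness} supplies the uniform second-moment bound, Lemma \ref{lem:local-avging} supplies the local-averaging condition with mean field $\bar{\vec{g}}$, and the two steps above identify $(\bar{\vec{\mu}},\frac{1}{1-\lambda})$ as an asymptotically stable point whose basin contains all of $H\times(0,\infty)$. It remains only to certify that $(\vec{\mu}_n, T_n)$ returns to a compact subset of this basin infinitely often almost surely. The $\vec{\mu}_n$ coordinate is trapped in the compact simplex $H$. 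For $T_n$, the identity $T_n = \frac{1}{n+1}\sum_{j=0}^{n}\tau^{(j)}$ together with $\tau^{(j)}\geq 1$ and $T_0\geq 1$ inductively gives $T_n \geq 1$, while the uniform bound $\mathbb{E}[\tau^{(j)}\mid\mathscr{F}_{j-1}] = \vec{\mu}_{j-1}^{\tp}(I-Q)^{-1}\vec{1} \leq M \triangleq \max_{\vec{\mu}\in H}\vec{\mu}^{\tp}(I-Q)^{-1}\vec{1} < \infty$ combined with the uniform second-moment control from Lemma \ref{lem:uniform-boundedness} permits a Kolmogorov-type strong law on the conditional martingale differences $\tau^{(j)} - \mathbb{E}[\tau^{(j)}\mid\mathscr{F}_{j-1}]$, giving $\limsup_n T_n \leq M$ almost surely. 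Thus $(\vec{\mu}_n, T_n) \in H\times[1,M+1]$ eventually with probability one.

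The principal subtlety is the time change in the first step: one must know a priori that $\Gamma$ is proper, ruling out the pathology in which $T(t)$ blows up so rapidly that the reparametrization does not exhaust $[0,\infty)$. Lemma \ref{lem:Gamma-lemma} closes this loophole by contradiction, but its argument uses boundedness of $\vec{\mu}(\cdot)$ in $H$, which in turn rests on the non-negativity of $(I-Q)^{-1}$ exploited in the Duhamel representation within the proof of Lemma \ref{lem:convergence}. Untangling this apparent circular dependence—showing that $\vec{\mu}\in H$ can be deduced from purely non-negativity considerations independently of $T$—is the main technical obstacle; the almost sure tightness of $T_n$, though requiring a second-moment argument rather than a mean bound, is then comparatively mechanical.
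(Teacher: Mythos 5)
Your route is essentially the paper's: time-change via $\Gamma$, reduce to the autonomous ODE (Equation \ref{eq:reduced_ODE}), invoke Lemma \ref{lem:convergence}, pull back via Lemma \ref{lem:Gamma-lemma}, integrate the linear inhomogeneous equation for $T(t)$ (your convolution split is the same computation as the paper's L'H\^opital step), then invoke Kushner--Yin Theorem 5.2.1. The one genuine departure is the almost-sure boundedness of $T_n$: the paper's Lemma \ref{lem:-Tn-boundedness} couples each $\tau^{(k)}$ to an i.i.d.\ dominating sequence $\tilde\tau^{(k)}$ and applies the classical SLLN, whereas you center at $\mathbb{E}[\tau^{(j)}\mid\mathscr{F}_{j-1}]$ and invoke a martingale strong law, leaning on the second-moment control already supplied by Lemma \ref{lem:uniform-boundedness}. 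Both work; the paper's route needs only first moments of the stochastic dominant but requires a coupling step, yours is mechanical given the variance bound already in hand, and you also supply the elementary lower bound $T_n\geq 1$ that the paper leaves implicit. Your closing observation about the properness of $\Gamma$ is a fair criticism of the exposition, which the paper also glosses over, but there is no genuine circularity: the Duhamel representation of the $\vec{\mu}$-component of the full ODE gives $\vec{\mu}(t)\geq 0$ directly from $(I-Q)^{-1}\geq 0$ for any positive trajectory $T(\cdot)$, and $\vec{1}^{\tp}\dot{\vec{\mu}}(t)=0$ preserves the normalization, so $\vec{\mu}(t)\in H$ can be established a priori without reference to $T$; then $\vec{\mu}(t)^{\tp}(I-Q)^{-1}\vec{1}\geq 1$ yields $\dot{T}\geq 1-T$, which keeps $T$ bounded away from zero and makes $\Gamma$ proper, closing the apparent loop by a short bootstrap.
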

\begin{proof}
Here we chain together the above few lemmas. We find that $\vec{\mu}(\Gamma^{-1}(t))$
is a solution to the reduced ODE with initial condition $\vec{\mu}(0).$
Therefore Lemma \ref{lem:convergence} tells us that $\vec{\mu}(\Gamma^{-1}(t))\rightarrow\bar{\vec{\mu}}.$
Furthermore, Lemma \ref{lem:Gamma-lemma} implies $\Gamma^{-1}(t)\rightarrow\infty$.
Together, it means $\vec{\mu}(t)\rightarrow\bar{\vec{\mu}}$.

$T(t)$ can be solved using the formula
\[
T(t)=\frac{\int_{0}^{t}\mathbb{E}_{\vec{\mu}(s)}[\tau]e^{s}ds+T_{0}}{e^{t}}.
\]
Because $\mathbb{E}_{\vec{\mu}(s)}[\tau]\rightarrow\mathbb{E}_{\bar{\vec{\mu}}}[\tau]$,
we can use L'Hopital's rule and get
\[
\lim_{t\rightarrow\infty}T(t)=\lim_{t\rightarrow\infty}\frac{\mathbb{E}_{\vec{\mu}(t)}[\tau]e^{t}}{e^{t}}=\mathbb{E}_{\bar{\vec{\mu}}}[\tau]=\frac{1}{1-\lambda}.
\]

Now one might notice that $T_{n}$ does not lie in a bounded set.
This could potentially lead to problems when the proof of \cite{kushner2003stochastic}
Theorem 5.2.1 assumes that $\int_{0}^{t}\bar{\vec{f}}(\vec{\mu}^{n}(s),T^{n}(s))ds$
and $\int_{0}^{t}\bar{h}(\vec{\mu}^{n}(s),T^{n}(s))ds$ are equicontinuous
classes of functions (for almost every $\omega$).

However, $T_{n}=\frac{1}{n+1}\sum_{k=1}^{n}\tau^{(k)}$, and we can
show that $T_{n}$ is bounded almost surely by a finite random variable
by Lemma \ref{lem:-Tn-boundedness}. This means $T^{n}(s,\omega)$
lives on a compact set for each fixed $\omega$ in a set of full measure.

We now satisfy all the requirements of Theorem 4.2.1 of \cite{kushner2003stochastic}
and our iterates $(\vec{\mu}_{n},T_{n})$ remains within the domain
of attraction ($H\times(0,\infty)$) of ($\bar{\vec{\mu}},\,\frac{1}{1-\lambda}$)
infinitely often. Hence the stochastic approximation algorithm iterates
converge to that point with probability 1.\end{proof}
\begin{lem}
\label{lem:-Tn-boundedness}$T_{n}$ is almost surely bounded by a
finite random variable\end{lem}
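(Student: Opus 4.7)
The plan is to decompose $T_n$ into a predictable part that is bounded in the supremum norm uniformly in $n$, plus a normalized martingale residual that vanishes almost surely by Kronecker's lemma. Recall that $T_n=\tfrac{1}{n+1}\sum_{k=0}^n \tau^{(k)}$ and that conditionally on $\mathscr{F}_{k-1}$ the law of $\tau^{(k)}$ is that of the absorption time of the chain $Q$ started from the probability vector $\vec{\mu}_{k-1}\in H$, where $H$ is the compact probability simplex over the transient states.

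First I would establish a uniform moment bound. Because $Q$ is irreducible and substochastic on a finite state space, its spectral radius $\rho(Q)<1$, so there exist $C<\infty$ and $r\in(0,1)$ with $\mathbb{P}_{\vec{\mu}}(\tau>k)\leq \vec{\mu}^{\tp}Q^k\vec{1}\leq C r^k$ for every $\vec{\mu}\in H$ and every $k\geq 0$. Summing yields constants $M_1,M_2<\infty$ such that
\[
\sup_{\vec{\mu}\in H}\mathbb{E}_{\vec{\mu}}[\tau]\leq M_1,\qquad \sup_{\vec{\mu}\in H}\mathbb{E}_{\vec{\mu}}[\tau^2]\leq M_2.
\]
In particular, $\mathbb{E}[\tau^{(k)}\mid\mathscr{F}_{k-1}]=\mathbb{E}_{\vec{\mu}_{k-1}}[\tau]\leq M_1$ and $\mathrm{Var}(\tau^{(k)}\mid\mathscr{F}_{k-1})\leq M_2$ almost surely, uniformly in $k$.

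Next, set $Y_k\triangleq \tau^{(k)}-\mathbb{E}[\tau^{(k)}\mid\mathscr{F}_{k-1}]$ so that $\{Y_k\}$ is a martingale difference sequence adapted to $\{\mathscr{F}_k\}$ with $\mathbb{E}[Y_k^2\mid\mathscr{F}_{k-1}]\leq M_2$. Write
\[
T_n \;=\; \frac{1}{n+1}\sum_{k=0}^n \mathbb{E}[\tau^{(k)}\mid\mathscr{F}_{k-1}] \;+\; \frac{1}{n+1}\sum_{k=0}^n Y_k \;\leq\; M_1 + \frac{1}{n+1}\Bigl|\sum_{k=0}^n Y_k\Bigr|.
\]
To show the martingale tail is almost surely small, consider the weighted partial sums $S_n\triangleq\sum_{k=1}^n Y_k/(k+1)$. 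These form an $L^2$-bounded martingale since $\mathbb{E}[S_n^2]\leq \sum_{k=1}^\infty M_2/(k+1)^2<\infty$, so $S_n$ converges almost surely to a finite random variable by Doob's martingale convergence theorem. Kronecker's lemma then gives $(n+1)^{-1}\sum_{k=0}^n Y_k \to 0$ almost surely, hence this quantity is almost surely bounded by some finite random variable $W(\omega)$.

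Combining the two pieces yields $\sup_n T_n \leq M_1 + W(\omega)<\infty$ almost surely, which is the claim. The main (minor) obstacle is simply arranging the uniform-in-$\vec{\mu}$ second moment bound on $\tau$; everything else follows from a standard martingale SLLN argument. No delicate issue arises because the simplex $H$ is compact and $Q$ has spectral radius strictly less than one.
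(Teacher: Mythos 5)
Your proof is correct, but it takes a genuinely different route from the paper's. The paper's argument is a coupling argument: it introduces $\tilde{\tau}(x)$, the absorption time started from state $x$, defines $\tilde{\tau}=\max_{x\in S}\tilde{\tau}(x)$, notes that $\tilde{\tau}$ stochastically dominates $\tau^{(k)}$ regardless of the (random) initial distribution $\vec{\mu}_{k-1}$, and then couples the dependent sequence $\{\tau^{(k)}\}$ to an i.i.d.\ dominating sequence $\{\tilde{\tau}^{(k)}\}$ with $\mathbb{E}[\tilde{\tau}]<\infty$, so that $\limsup_n T_n\leq\lim_n\frac{1}{n+1}\sum_k\tilde{\tau}^{(k)}=\mathbb{E}[\tilde{\tau}]$ by the classical i.i.d.\ SLLN. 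You instead split $T_n$ into a uniformly bounded predictable part plus a centered residual, prove uniform-in-$\vec{\mu}$ first and second moment bounds on $\tau$ via the spectral radius $\rho(Q)<1$, and kill the residual by an $L^2$-martingale SLLN (Doob convergence of $\sum Y_k/(k+1)$ followed by Kronecker). Both are sound. The paper's coupling approach is conceptually shorter and directly gives a quantitative a.s.\ bound ($\mathbb{E}[\tilde{\tau}]$ rather than an unspecified finite random variable), but the step ``we can couple $\tau^{(k)}$ with i.i.d.\ $\tilde{\tau}^{(k)}$'' glosses over a construction that must be done conditionally on $\mathscr{F}_{k-1}$; your martingale decomposition sidesteps that construction entirely at the cost of needing the uniform second-moment bound, and it would extend more readily to settings where a single a.s.\ dominating absorption time is not available.
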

\begin{proof}
Recall that $T_{n}=\frac{1}{n+1}\sum_{k=1}^{n}\tau^{(k)}$. We can
consider $\tilde{\tau}(x)$ which is the stopping time of a Markov
chain starting from state $x$ and define
\[
\tilde{\tau}\triangleq\max\{\tilde{\tau}(x)|\forall x\in S\}.
\]
As a consequence,
\[
\mathbb{P}(\tau^{(k)}>y)\leq\mathbb{P}(\tilde{\tau}>y).
\]
That means we can couple the random variable $\tau^{(k)}$ with a
sequence of i.i.d. random variables $\tilde{\tau}^{(k)}$. Each one
has finite expectations because
\begin{align*}
\mathbb{E}[\tilde{\tau}] & =\sum_{n\geq0}\mathbb{P}(\tilde{\tau}>n)\\
 & \leq\sum_{n\geq0}\sum_{x\in S}\mathbb{P}(\tilde{\tau}(x)>n)\\
 & =\sum_{x\in S}\mathbb{E}(\tilde{\tau}(x))<\infty.
\end{align*}
This means $\lim_{n}T_{n}\leq\lim_{n}\frac{1}{n+1}\sum_{k=1}^{n}\tilde{\tau}^{(k)}\rightarrow\mathbb{E}[\tilde{\tau}]$
. Hence $T_{n}$ is almost surely bounded by a finite random variable.
\end{proof}

\subsection{Rate of convergence proof}

In trying to obtain a rate of convergence result for main algorithm
in the form of Equation\ref{eq:mainalg_formal}, we invoke Theorem
10.2.1 of \cite{kushner2003stochastic}. There's a whole set of assumptions
that need to be checked. Here, recall that $\vec{\theta}_{n}$ contains
two components, $\vec{\mu}_{n}$ and $T_{n}$ (Equation \ref{eq:mainalg_formal})
and that the notation is $\mathbb{E}_{n}(\vec{Y}_{n})\triangleq\mathbb{E}[\vec{Y}_{n}|\mathscr{F}_{n}]$.
We list the sufficient conditions here.
\begin{enumerate}
\item $\{\vec{W}_{n}\mathbb{I}_{\{|\vec{\theta}_{n}-\bar{\vec{\theta}}|\leq\rho\}}\}$
has to be uniformly integrable where $\overline{\vec{\theta}}$ is
the w.p. 1 limit of $\vec{\theta}_{n}$. This is trivial because $\sup_{n}\mathbb{E}|\vec{W}_{n}|^{2}<\infty$
by Lemma \ref{lem:uniform-boundedness}.
\item $\bar{\vec{\theta}}$, the limit point of the ODE, is an isolated
stable point. Again, it's trivial.
\item $\mathbb{E}(\vec{W}_{n}|\mathscr{F}_{n})=\vec{g}_{n}(\vec{\theta}_{n})$
can be expanded as
\[
\vec{g}_{n}(\vec{\theta})=\vec{g}_{n}(\bar{\vec{\theta}})+(D\vec{g}_{n})(\bar{\vec{\theta}})(\vec{\theta}-\bar{\vec{\theta}})+o(|\vec{\theta}-\bar{\vec{\theta}}|)
\]
 where the error $o$ is uniform in $n$. This is not so trivial and
the proof is given below in Lemma \ref{lem:uniformness-of-error}.
\item We need the sequence $\left\{ \frac{\vec{\theta}_{n}-\bar{\vec{\theta}}}{\sqrt{\epsilon_{n}}}\right\} $
to be tight. See Lemma \ref{lem:tightness}.
\item $\lim_{n,m}\frac{1}{\sqrt{m}}\sum_{i=n}^{n+mt-1}\vec{g}_{i}(\bar{\vec{\theta}})=0$
uniformly for each small t-interval. See Lemma \ref{lem:-sqrtn-averaging}.
\item There exists a matrix $A$ such that $(A+I/2)$ is Hurwitz and
\[
\lim_{n,m}\frac{1}{m}\sum_{i=n}^{n+m-1}[(D\vec{g}_{i})(\bar{\vec{\theta}})-A]=0.
\]
Let $A=(D\bar{\vec{g}})(\bar{\vec{\theta}})$ then the above is true
because $D\vec{g}_{i}(\bar{\vec{\theta}})\rightarrow D\bar{\vec{g}}(\bar{\vec{\theta}})$
by Lemma \ref{lem:Jacobian-convergence}. Conditions for $A+I/2$
being Hurwitz is given in Theorem \ref{thm:hurwitz}.
\item Define $\delta\vec{M}_{n}=\vec{W}_{n}-\mathbb{E}_{n}(\vec{W}_{n})$.
There exists a $p>0$ such that
\[
\sup_{n}\mathbb{E}|\delta\vec{M}_{n}|^{2+p}<\infty
\]
and a non-negative definite matrix $\Sigma$ such that
\[
\mathbb{E}_{n}\delta\vec{M}_{n}\delta\vec{M}_{n}^{\tp}\rightarrow\Sigma.
\]
This is proven in Lemma \ref{lem:quadratic-variation}.
\end{enumerate}

\subsubsection{Uniformness of the error terms}
\begin{lem}
\label{lem:uniformness-of-error}$\mathbb{E}(\vec{W}_{n}|\mathscr{F}_{n})=\vec{g}_{n}(\vec{\theta}_{n})$
can be expanded as
\[
\vec{g}_{n}(\vec{\theta})=\vec{g}_{n}(\bar{\vec{\theta}})+(D\vec{g}_{n})(\bar{\vec{\theta}})(\vec{\theta}-\bar{\vec{\theta}})+o(\left|\vec{\theta}-\bar{\vec{\theta}}\right|)
\]
 where the error $o$ is uniform in $n$.\end{lem}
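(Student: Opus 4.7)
The plan is to decompose $\vec{g}_n = (\vec{f}_n,\bar{h})$ into the probability-vector block coming from $\vec{Y}_n$ and the time block $\bar{h}(\vec{\mu},T) = \vec{\mu}^\top(I-Q)^{-1}\vec{1} - T$ coming from $Z_n$. The block $\bar{h}$ does not depend on $n$, is linear in $\vec{\mu}$, and affine in $T$, so its Taylor expansion around $\bar{\vec{\theta}}$ is exact with vanishing remainder and contributes nothing to the problem. Conditioning on the initial state, the remaining block can be written
\[
\vec{f}_n(\vec{\mu},T)(x) = \sum_y \mu(y)\,\mathbb{E}_y\!\Bigl[\frac{N_x(\tau)}{T+\tau/(n+1)}\Bigr] - \mu(x)\sum_y\mu(y)\,\mathbb{E}_y\!\Bigl[\frac{\tau}{T+\tau/(n+1)}\Bigr],
\]
where $N_x(\tau) = \sum_{l=0}^{\tau-1}\mathbb{I}(X_l=x)$. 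Thus $\vec{f}_n$ is a polynomial of degree two in $\vec{\mu}$ and a smooth function of $T$ for $T>0$.

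The strategy is Taylor's theorem with integral remainder about $\bar{\vec{\theta}}=(\bar{\vec{\mu}},1/(1-\lambda))$. First I would fix a ball $N_\rho=B(\bar{\vec{\theta}},\rho)$ small enough that $T\geq\bar{T}/2$ on $N_\rho$, and then use
\[
\vec{f}_n(\vec{\theta})-\vec{f}_n(\bar{\vec{\theta}})-(D\vec{f}_n)(\bar{\vec{\theta}})(\vec{\theta}-\bar{\vec{\theta}}) = \int_0^1 (1-s)\,(D^2\vec{f}_n)\bigl(\bar{\vec{\theta}}+s(\vec{\theta}-\bar{\vec{\theta}})\bigr)\cdot(\vec{\theta}-\bar{\vec{\theta}})^{\otimes 2}\,ds.
\]
It then suffices to prove
\[
\sup_{n}\;\sup_{\vec{\theta}\in N_\rho}\,\|D^2\vec{f}_n(\vec{\theta})\| \leq C < \infty,
\]
because this forces the remainder to be at most $\tfrac{C}{2}|\vec{\theta}-\bar{\vec{\theta}}|^2$, which is in particular $o(|\vec{\theta}-\bar{\vec{\theta}}|)$ uniformly in $n$.

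The core step is this uniform Hessian bound. Each entry of $D^2\vec{f}_n$ is a polynomial in $\vec{\mu}$ (bounded on the simplex) multiplied by an expectation of the form $\mathbb{E}_y[N_x(\tau)/(T+\tau/(n+1))^k]$ or $\mathbb{E}_y[\tau/(T+\tau/(n+1))^k]$ with $k\leq 3$. Each such integrand is dominated in absolute value by $\tau\,/(\bar{T}/2)^k$ uniformly in $(\vec{\mu},T,n)$ on $N_\rho$, and $\max_y \mathbb{E}_y[\tau^2]<\infty$ by the same finite-state irreducibility argument used in Lemma~\ref{lem:uniform-boundedness} and Lemma~\ref{lem:-Tn-boundedness}. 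This envelope justifies both the differentiations under the expectation and the required uniform bound.

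The only real obstacle is combinatorial bookkeeping: $\vec{\mu}$ enters $\vec{f}_n$ both through the weighting $\sum_y\mu(y)\mathbb{E}_y[\cdot]$ and through the explicit $\mu(x)$ prefactor, so the mixed partials and their bounds must be tracked carefully. No deeper analytic issue appears, because the $n$-dependence enters only through the positive term $\tau/(n+1)$ in the denominator, which only makes every integrand smaller in absolute value; hence any Hessian bound that works for the limiting field $\bar{\vec{f}}$ (with denominator $T$ alone) automatically transfers uniformly to all finite $n$.
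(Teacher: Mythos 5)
Your argument is correct, and the overall strategy matches the paper's: show that the second partial derivatives of $\vec{f}_n$ are bounded uniformly in $n$ on a small ball around $\bar{\vec{\theta}}$, so that Taylor's theorem gives an $O(|\vec{\theta}-\bar{\vec{\theta}}|^2)$ remainder uniformly, and dismiss the $\bar{h}$ block as an exact affine function. The route to the Hessian bound is genuinely different, though. The paper introduces the integral kernel $\frac{1}{T+\tau/(n+1)} = \int_0^\infty e^{-(T+\tau/(n+1))u}\,du$, pushes the expectation through using $v(x,s)=\mathbb{E}[e^{-s\tau}\mid X_0=x]$, sums a geometric series in $Q$ to obtain a closed-form integrand $\int_0^\infty e^{-Tu}\bigl[\,\cdots (I-e^{-u/(n+1)}Q)^{-1}\cdots\,\bigr]\,du$, and then reads off the uniform bound from the exponential damping $e^{-Tu}$ and the uniform boundedness of $v$. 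You bypass all of that: you condition on the initial state, observe that each entry of $D^2\vec{f}_n$ is (a bounded polynomial in $\vec{\mu}$) $\times$ (an expectation whose integrand is pointwise dominated by $\tau/(\bar{T}/2)^k$ with $k\le 3$), and invoke dominated convergence and finiteness of $\max_y\mathbb{E}_y[\tau]$ (you write $\mathbb{E}_y[\tau^2]$, which is stronger than needed but still true for a finite irreducible absorbing chain). Your closing observation --- that the $n$-dependence sits entirely in the additive nonnegative term $\tau/(n+1)$ in the denominator, so any envelope valid for the limiting field $\bar{\vec{f}}$ transfers verbatim to every $\vec{f}_n$ --- is cleaner and more transparent than the paper's moment-generating-function computation, and makes the uniformity in $n$ self-evident rather than something to read off of a final formula. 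The paper's representation has the side benefit of producing an explicit smooth integral formula for $\vec{f}_n$ that is reused in spirit elsewhere (e.g.\ the Jacobian computations), but for the purpose of this lemma alone your version is the more economical one.
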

\begin{proof}
Refer to proof of \ref{lem:local-avging} for the components of $\vec{g}_{n}$.
Again, the $\overline{h}$ component causes no problem because it
is independent of $n$. By defining $v(x,s)=E[e^{-s\tau}|X_{0}=x]$,
$\vec{f}_{n}(\vec{\mu},T)$, the $\vec{\mu}$ component of $\vec{g}_{n}$,
can be expanded as
\begin{align*}
\vec{f}_{n}(\vec{\mu},T)(x) & =\mathbb{E}_{\vec{\mu},T}\left[\int_{0}^{\infty}e^{-(T+\frac{\tau}{n+1})u}\left(\sum_{k=0}^{\tau-1}(\mathbb{I}(X_{k}=x)-\vec{\mu}(x))\right)du\right]\\
 & =\int_{0}^{\infty}\mathbb{E}_{\vec{\mu},T}\left[e^{-(T+\frac{\tau}{n+1})u}\sum_{k=0}^{\infty}\left(\mathbb{I}(\tau>k,X_{k}=x)-\mathbb{I}(\tau>k)\vec{\mu}(x)\right)\right]du\\
 & =\int_{0}^{\infty}e^{-Tu}\sum_{k=0}^{\infty}\left(e^{-k\frac{u}{n+1}}E_{\vec{\mu}}\left[e^{-(\tau-k)\frac{u}{n+1}}\left(\mathbb{I}(\tau>k,X_{k}=x)-\mathbb{I}(\tau>k)\vec{\mu}(x)\right)\right]\right)du\\
 & =\int_{0}^{\infty}e^{-Tu}\sum_{k=0}^{\infty}\left(e^{-k\frac{u}{n+1}}E_{\vec{\mu}}\left[v(X_{k},\frac{u}{n+1})(\mathbb{I}(\tau>k,X_{k}=x)-\mathbb{I}(\tau>k)\vec{\mu}(x))\right]\right)du\\
 & =\int_{0}^{\infty}e^{-Tu}\sum_{k=0}^{\infty}\left(e^{-k\frac{u}{n+1}}\left[v(x,\frac{u}{n+1})\vec{\mu}^{\tp}Q^{k}\delta_{x}-(\vec{\mu}^{\tp}Q^{k}\vec{v}(\cdot,\frac{u}{n+1}))\vec{\mu}(x)\right]\right)du\\
 & =\int_{0}^{\infty}e^{-Tu}\left[v(x,\frac{u}{n+1})\vec{\mu}^{\tp}(I-e^{-\frac{u}{n+1}}Q)^{-1}\delta_{x}-\vec{\mu}^{\tp}(I-e^{-\frac{u}{n+1}}Q)^{-1}\vec{v}(\cdot,\frac{u}{n+1}))\vec{\mu}(x)\right]du.
\end{align*}
It is easy to see that integration and (partial) differentiation can
be interchanged in this case because of the integrand's smoothness
and integrability. $v(x,s)$ is bounded by $1$ and $0$ for $s\geq0$.
The integrand consists of a second-order expression in $\vec{\mu}$
multiplied by an exponential damping factor $e^{-Tu}$. It is now
clear that all the mixed second partial derivatives of $\vec{f}_{n}$
will be bounded uniformly in $n$ \emph{in a neighborhood} around
the stationary point $\bar{\vec{\theta}}=(\bar{\vec{\mu}},\bar{\vec{T}})$
since $\bar{T}>0$.
\end{proof}

\subsubsection{Tightness of the normalized iterates}

In order for the CLT to hold, the normalized iterates $\left\{ \frac{\vec{\theta}_{n}-\bar{\vec{\theta}}}{\sqrt{\epsilon_{n}}}\right\} $
has to be tight.
\begin{lem}
\label{lem:tightness}The normalized iterates $\left\{ \frac{\vec{\theta}_{n}-\bar{\vec{\theta}}}{\sqrt{\epsilon_{n}}}\right\} $
is tight.\end{lem}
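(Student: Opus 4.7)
The plan is to invoke the perturbed-Lyapunov-function strategy for proving tightness of normalized stochastic-approximation iterates (Section~10.4 of \cite{kushner2003stochastic}), specialized to the $O(1/n)$ step size and the Jacobian structure already established. Introduce the rescaled deviation $\vec{U}_n \triangleq (\vec{\theta}_n - \bar{\vec{\theta}})/\sqrt{\epsilon_n}$. Substituting the expansion $\vec{g}_n(\vec{\theta}_n) = \vec{g}_n(\bar{\vec{\theta}}) + (D\vec{g}_n)(\bar{\vec{\theta}})(\vec{\theta}_n - \bar{\vec{\theta}}) + o(|\vec{\theta}_n - \bar{\vec{\theta}}|)$ (uniform in $n$, by Lemma~\ref{lem:uniformness-of-error}) into $\vec{\theta}_{n+1} - \bar{\vec{\theta}} = (\vec{\theta}_n - \bar{\vec{\theta}}) + \epsilon_n[\vec{g}_n(\vec{\theta}_n) + \delta \vec{M}_n]$ and using $\sqrt{\epsilon_n/\epsilon_{n+1}} = 1 + \tfrac{1}{2n} + O(n^{-2})$ for $\epsilon_n = 1/n$ yields
\begin{equation*}
\vec{U}_{n+1} = \bigl[I + \epsilon_n\bigl(A + \tfrac{1}{2}I\bigr) + o(\epsilon_n)\bigr]\vec{U}_n + \sqrt{\epsilon_n}\,\delta \vec{M}_n + \sqrt{\epsilon_n}\,\vec{g}_n(\bar{\vec{\theta}}) + O(\epsilon_n^{3/2})|\vec{U}_n|,
\end{equation*}
where $A \triangleq D\bar{\vec{g}}(\bar{\vec{\theta}})$; the emergence of the effective Jacobian $A + \tfrac{1}{2}I$ is precisely the source of the CLT eigenvalue condition used in Theorem~\ref{thm:DTMC-unprojected}.

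Next, since that CLT hypothesis makes $A + \tfrac{1}{2}I$ Hurwitz, I would pick the positive-definite $P$ solving the Lyapunov equation $(A+\tfrac{1}{2}I)^{\tp}P + P(A+\tfrac{1}{2}I) = -I$ and work with $V(\vec{u}) = \vec{u}^{\tp}P\vec{u}$. Conditioning on $\mathscr{F}_n$ kills the $\delta \vec{M}_n$ cross-term, the quadratic Jacobian contribution produces a $-c\epsilon_n V(\vec{U}_n)$ drift, Lemma~\ref{lem:uniform-boundedness} bounds $\mathbb{E}_n|\sqrt{\epsilon_n}\delta \vec{M}_n|^2$ by $C\epsilon_n$, and the bias $\sqrt{\epsilon_n}\vec{g}_n(\bar{\vec{\theta}})$ is handled by Young's inequality together with the pointwise $\vec{g}_n(\bar{\vec{\theta}}) \to 0$ from Lemma~\ref{lem:local-avging}. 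The resulting drift inequality, valid for $n$ large,
\begin{equation*}
\mathbb{E}_n V(\vec{U}_{n+1}) \leq (1 - c'\epsilon_n) V(\vec{U}_n) + C'\epsilon_n,
\end{equation*}
iterates (using $\sum \epsilon_n = \infty$ so that $\prod(1 - c'\epsilon_k) \to 0$) to give $\sup_n \mathbb{E} V(\vec{U}_n) < \infty$, and tightness of $\{\vec{U}_n\}$ then follows from Markov's inequality applied to $V$.

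The main obstacle is that the expansion in Lemma~\ref{lem:uniformness-of-error} is only uniformly valid in a neighborhood of $\bar{\vec{\theta}}$, so the drift inequality above is only legitimate once $\vec{\theta}_n$ has entered that neighborhood. I would resolve this by a localization argument: Theorem~\ref{thm:DTMC-unprojected} gives $(\vec{\mu}_n,T_n) \to \bar{\vec{\theta}}$ almost surely, and Lemma~\ref{lem:-Tn-boundedness} ensures that $T_n$ is almost surely dominated by a finite random variable, so for each $\varepsilon > 0$ there is a random time $N_\varepsilon$ after which $\vec{\theta}_n$ stays in a fixed compact neighborhood of $\bar{\vec{\theta}}$ on which the expansion is uniformly valid. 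Running the Lyapunov recursion from $N_\varepsilon$ onward on the events $\{N_\varepsilon \leq K\}$ (whose probability tends to $1$ as $K \to \infty$), and bounding the finitely many earlier values using that $\vec{\mu}_n \in H$ and $T_n$ lies in a $K$-dependent compact window, transfers the uniform $V$-bound to the original sequence and yields tightness of $\{\vec{U}_n\}$.
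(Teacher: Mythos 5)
Your approach is in substance the same as the paper's: a perturbed Lyapunov argument in the spirit of Theorem 10.4.1 of \cite{kushner2003stochastic}, with a quadratic form $V$ built from the Lyapunov equation for the linearized drift and a $(1-c\epsilon_n)$-contraction plus $O(\epsilon_n)$-forcing drift inequality. The paper works with $V(\vec{\theta}_n)$ directly and shows $\mathbb{E}V(\vec{\theta}_n)=O(\epsilon_n)$, using the Lyapunov equation $A^{\tp}P+PA=-C$ for $A=D\bar{\vec{g}}(\bar{\vec{\theta}})$ and tracking the perturbed Jacobians $A_n=D\vec{g}_n(\bar{\vec{\theta}})$ through matrices $C_n\to C$; you rescale first to $\vec{U}_n$ and set up the Lyapunov equation for $A+\tfrac{1}{2}I$, which is an equivalent bookkeeping. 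Your explicit localization step is a welcome elaboration: the paper simply defers to the Kushner--Yin argument for the fact that the uniform Taylor expansion of Lemma~\ref{lem:uniformness-of-error} need only hold in a neighborhood of $\bar{\vec{\theta}}$.

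One point, however, is not merely cosmetic. You propose to control the bias $\sqrt{\epsilon_n}\,\vec{g}_n(\bar{\vec{\theta}})$ ``by Young's inequality together with the pointwise $\vec{g}_n(\bar{\vec{\theta}})\to 0$ from Lemma~\ref{lem:local-avging}.'' Pointwise decay to zero is not strong enough. After Young's inequality the residual forcing term in the drift inequality is of order $|\vec{g}_n(\bar{\vec{\theta}})|^2$, and for the recursion $v_{n+1}\le(1-c\epsilon_n)v_n+C\epsilon_n+C'|\vec{g}_n(\bar{\vec{\theta}})|^2$ to yield $\sup_n v_n<\infty$ you need $|\vec{g}_n(\bar{\vec{\theta}})|^2=O(\epsilon_n)$, i.e.\ $\vec{g}_n(\bar{\vec{\theta}})=O(n^{-1/2})$; a decay like $1/\log n$, which is compatible with ``$\to 0$'', would make the bound blow up. This is precisely why the paper separately proves the quantitative Lemma~\ref{lem:rate-of-gn}, namely $\vec{g}_n(\bar{\vec{\theta}})=O(1/n)$, and feeds that into the tightness proof (fact (4) in the paper's list). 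You should invoke Lemma~\ref{lem:rate-of-gn} here rather than Lemma~\ref{lem:local-avging}; with that substitution your argument goes through.
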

\begin{proof}
Here make a slight modification to the proof of Theorem 10.4.1 in
\cite{kushner2003stochastic}. We let $A=(D\overline{\vec{g}})(\bar{\vec{\theta}})$.
For any positive definite matrix $C$, there exists a positive definite
solution $P$ to the equation
\[
A^{\tp}P+PA=-C.
\]
 We take this $P$ and for each $A_{n}=(D\vec{g}_{n})(\bar{\vec{\theta}})$,
we obtain a sequence of matrices $C_{n}$ via
\[
A_{n}^{\tp}P+PA_{n}=-C_{n}.
\]
Obviously $C_{n}\rightarrow C$ and because $C$ is strictly positive
definite, there exists a $\lambda>0$ such that $C_{n}\succeq\lambda P$
in the positive-definite sense.

We use the Lyapunov function $V(\vec{\theta})=(\vec{\theta}-\bar{\vec{\theta}})^{\tp}P(\vec{\theta}-\bar{\vec{\theta}})$,
however, we now have to deal with the gradient field $\vec{g}_{n}$
as opposed to $\vec{g}$ in the proof of \cite{kushner2003stochastic}.
We will control the changes in the Lyapunov function by expanding
$\vec{g}_{n}$ around the stationary point $\bar{\vec{\theta}}$

\begin{align*}
\mathbb{E}[V(\vec{\theta}_{n+1})|\mathscr{F}_{n}]-V(\vec{\theta}_{n}) & =2\epsilon_{n}(\vec{\theta}_{n}-\bar{\vec{\theta}})^{\tp}P\vec{g}_{n}(\vec{\theta}_{n})+\epsilon_{n}^{2}\mathbb{E}_{n}(\vec{W}_{n}^{\tp}P\vec{W}_{n})\\
 & =2\epsilon_{n}(\vec{\theta}_{n}-\bar{\vec{\theta}})^{\tp}P\vec{g}_{n}(\bar{\vec{\theta}})+2\epsilon_{n}(\vec{\theta}_{n}-\bar{\vec{\theta}})^{\tp}PA_{n}(\vec{\theta}_{n}-\bar{\vec{\theta}})\\
 & \qquad+2\epsilon_{n}(\vec{\theta}_{n}-\bar{\vec{\theta}})^{\tp}Po(|\vec{\theta}_{n}-\bar{\vec{\theta}}|)+O(\epsilon_{n}^{2})\\
 & =O\left(\epsilon_{n}\frac{1}{n}\right)-\epsilon_{n}(\vec{\theta}_{n}-\bar{\vec{\theta}})^{\tp}C_{n}(\vec{\theta}_{n}-\bar{\vec{\theta}})+2\epsilon_{n}(\vec{\theta}_{n}-\bar{\vec{\theta}})^{\tp}Po(|\vec{\theta}_{n}-\bar{\vec{\theta}}|)+O(\epsilon_{n}^{2})\\
 & \leq O(\epsilon_{n}^{2})-\epsilon_{n}\tilde{\lambda}V(\vec{\theta}_{n})
\end{align*}
where the several facts are used
\begin{enumerate}
\item $C_{n}\succeq\lambda P$ for large $n$ (the inequality is in the
positive-definite sense).
\item $(\vec{\theta}_{n}-\bar{\vec{\theta}})^{\tp}Po(|\vec{\theta}_{n}-\bar{\vec{\theta}}|)\leq\delta V(\vec{\theta}_{n})$
for small $\delta$ and all $n$ large enough by Cauchy-Schwartz.
\item The error term $o(|\vec{\theta}_{n}-\bar{\vec{\theta}}|)$ of the
Taylor series expansion is uniform for all $\vec{g}_{n}$ as proven
in Lemma \ref{lem:uniformness-of-error}.
\item $\vec{g}_{n}(\bar{\vec{\theta}})=O(\frac{1}{n})$. This point is proven
in Lemma \ref{lem:rate-of-gn} below.
\end{enumerate}
At this point, we can use the rest of the proof of Theorem 10.4.1
of \cite{kushner2003stochastic} to show that $\mathbb{E}[V(\vec{\theta}_{n+1})|\mathscr{F}_{n}]=O(\epsilon_{n})$
which trivially leads to tightness.\end{proof}
\begin{lem}
\label{lem:rate-of-gn}$\vec{g}_{n}(\bar{\vec{\theta}})=O(\frac{1}{n})$\end{lem}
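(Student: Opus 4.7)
The plan is to exploit the fact that $\bar{\vec{\theta}} = (\bar{\vec{\mu}}, \bar{T})$ is a stationary point of the averaged ODE, so that $\bar{\vec{g}}(\bar{\vec{\theta}}) = 0$, and then bound the discrepancy $\vec{g}_n(\bar{\vec{\theta}}) - \bar{\vec{g}}(\bar{\vec{\theta}})$ directly by going back to the explicit definitions in Equation \eqref{eq:iterates_defined} and the proof of Lemma \ref{lem:local-avging}. The $T$-component $\bar{h}(\vec{\mu}, T) = \mathbb{E}_{\vec{\mu}}[\tau] - T$ has no $n$-dependence, so this component is identically zero at $\bar{\vec{\theta}}$ and causes no trouble. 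All the work is therefore in the $\vec{\mu}$-component $\vec{f}_n$.

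Next I would write the discrepancy as a single expectation. Since
\[
\vec{f}_n(\vec{\mu}, T)(x) = \mathbb{E}_{\vec{\mu}}\!\left[\frac{\sum_{l=0}^{\tau-1}(\mathbb{I}(X_l=x)-\vec{\mu}(x))}{T + \tau/(n+1)}\right], \qquad \bar{\vec{f}}(\vec{\mu},T)(x) = \mathbb{E}_{\vec{\mu}}\!\left[\frac{\sum_{l=0}^{\tau-1}(\mathbb{I}(X_l=x)-\vec{\mu}(x))}{T}\right],
\]
subtracting and evaluating at $\bar{\vec{\theta}}$ produces
\[
\vec{f}_n(\bar{\vec{\theta}})(x) - \bar{\vec{f}}(\bar{\vec{\theta}})(x) = -\frac{1}{n+1}\,\mathbb{E}_{\bar{\vec{\mu}}}\!\left[\frac{\tau \sum_{l=0}^{\tau-1}(\mathbb{I}(X_l=x) - \bar{\vec{\mu}}(x))}{\bar{T}\,(\bar{T}+\tau/(n+1))}\right].
\]
Because $\bar{T} \geq 1$ (indeed $\bar{T} = 1/(1-\lambda) \geq 1$), the denominator is bounded below by a strictly positive constant uniformly in $n$ and in the integrand. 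The numerator is in absolute value at most $2\tau^2$ componentwise, so the whole expression is bounded by $C\,\mathbb{E}_{\bar{\vec{\mu}}}[\tau^2]/(n+1)$.

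The remaining step is to observe that $\mathbb{E}_{\bar{\vec{\mu}}}[\tau^2] < \infty$; this is exactly the kind of bound that was already established in the proof of Lemma \ref{lem:uniform-boundedness}, where the geometric decay of $\vec{\mu}^{\tp} Q^{\lfloor \sqrt{l}\rfloor} \vec{1}$ produced a finite sum. Combining this with $\bar{\vec{f}}(\bar{\vec{\theta}}) = 0$, which follows from $\bar{\vec{\theta}}$ being the stationary point of the reduced ODE \eqref{eq:reduced_ODE}, gives $\vec{f}_n(\bar{\vec{\theta}}) = O(1/n)$ and hence $\vec{g}_n(\bar{\vec{\theta}}) = O(1/n)$. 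There is no real obstacle here; the only mild point to be careful about is keeping track of the sign and the $\tau/(n+1)$ in the denominator when expanding, which is why I factor out $1/(n+1)$ at the outset and then bound the rest uniformly using the already-available $\mathbb{E}_{\bar{\vec{\mu}}}[\tau^2] < \infty$ estimate.
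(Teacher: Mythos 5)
Your proof is correct and is actually noticeably cleaner than the paper's own argument. You and the paper both start from the same observations — the $T$-component $\bar{h}$ is $n$-independent and vanishes at $\bar{\vec{\theta}}$, and $\bar{\vec{f}}(\bar{\vec{\theta}}) = 0$ because $\bar{\vec{\theta}}$ is the stationary point — but you then diverge. The paper bounds $\vec{f}_n(\bar{\vec{\theta}})$ by splitting on the event $\{\tau > \sqrt{n}\}$ (an exponentially small tail handled by Cauchy--Schwarz) and on $\{\tau \le \sqrt{n}\}$ Taylor-expanding $(1+\tau/(n\bar{T}))^{-1}$ to second order, then bounding three separate terms; the truncation at $\sqrt{n}$ is needed precisely so that the expansion parameter $\tau/(n\bar{T})$ stays small. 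You instead use the exact algebraic identity
\[
\frac{1}{\bar{T} + \tau/(n+1)} - \frac{1}{\bar{T}} = -\frac{\tau/(n+1)}{\bar{T}\left(\bar{T}+\tau/(n+1)\right)},
\]
which pulls the factor $1/(n+1)$ out exactly with no truncation and no remainder term, leaving a single expectation bounded uniformly in $n$ by a multiple of $\mathbb{E}_{\bar{\vec{\mu}}}[\tau^2] < \infty$ (already available from Lemma~\ref{lem:uniform-boundedness}, or directly from the geometric tail of $\tau$ on a finite irreducible chain). Your route avoids the separate tail estimate, the Taylor remainder bookkeeping, and the case analysis entirely, and gives the same $O(1/n)$ rate with an explicit constant $\mathbb{E}_{\bar{\vec{\mu}}}[\tau^2]/\bar{T}^2$. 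The only thing worth stating explicitly, which you implicitly use, is that $|\sum_{l=0}^{\tau-1}(\mathbb{I}(X_l=x)-\bar{\vec{\mu}}(x))| \le \tau$ componentwise, so the numerator is controlled by $\tau^2$; you noted this, so the argument is complete.
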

\begin{proof}
At $\bar{\vec{\theta}}$, the gradient field $\bar{h}$ corresponding
to the $T_{n}$ component is always $0$ at the stationary point so
we focus on the $\vec{f}_{n}$ part.

\begin{align*}
\vec{f}_{n}(\bar{\vec{\theta}}) & =\frac{1}{\bar{T}}\mathbb{E}_{\bar{\vec{\mu}},\bar{T}}\left[\frac{\sum_{k=0}^{\tau-1}\left(\mathbb{I}(X_{k}=\cdot)-\bar{\vec{\mu}}(\cdot)\right)}{(1+\frac{\tau}{n\bar{T}})}\mathbb{I}(\tau>\sqrt{n})\right]+\frac{1}{\bar{T}}\mathbb{E}_{\bar{\vec{\mu}},\bar{T}}\left[\frac{\sum_{k=0}^{\tau-1}\left(\mathbb{I}(X_{k}=\cdot)-\bar{\vec{\mu}}(\cdot)\right)}{(1+\frac{\tau}{n\bar{T}})}\mathbb{I}(\tau\leq\sqrt{n})\right].
\end{align*}
The first part can be bounded by Cauchy-Schwartz inequality
\begin{align*}
\frac{1}{\bar{T}}\mathbb{E}_{\bar{\vec{\mu}},\bar{T}}\left[\frac{\sum_{k=0}^{\tau-1}(\mathbb{I}(X_{k}=\cdot)-\bar{\vec{\mu}}(\cdot))}{(1+\frac{\tau}{n\bar{T}})}\mathbb{I}(\tau>\sqrt{n})\right] & \leq\frac{1}{\bar{T}}\left[\mathbb{E}_{\bar{\vec{\mu}},\bar{T}}\left|\sum_{k=0}^{\tau-1}(\mathbb{I}(X_{k}=\cdot)-\bar{\vec{\mu}}(\cdot))\right|^{2}\right]^{\frac{1}{2}}\mathbb{P}(\tau>\sqrt{n})^{\frac{1}{2}}\\
 & =O(e^{-c\sqrt{n}}).
\end{align*}

The second part in the expansion of $\vec{f}_{n}(\bar{\vec{\theta}})$
can be further expanded using Taylor polynomial

\begin{eqnarray*}
\frac{1}{\bar{T}}\mathbb{E}_{\bar{\vec{\mu}},\bar{T}}\left[\frac{\sum_{k=0}^{\tau-1}\left(\mathbb{I}(X_{k}=\cdot)-\bar{\vec{\mu}}(\cdot)\right)}{(1+\frac{\tau}{n\bar{T}})}\mathbb{I}(\tau\leq\sqrt{n})\right] & = & \swarrow\\
\frac{1}{\bar{T}}\mathbb{E}_{\bar{\vec{\mu}},\bar{T}}\left[\left(\sum_{k=0}^{\tau-1}\left(\mathbb{I}(X_{k}=\cdot)-\bar{\vec{\mu}}(\cdot)\right)\right)\left(1-\frac{\tau}{n\bar{T}}+\frac{2}{(1-c)^{3}}\frac{\tau^{2}}{n^{2}\bar{T}^{2}}\right)\mathbb{I}(\tau\le\sqrt{n})\right] & = & \swarrow\\
(1)+(2)+(3)
\end{eqnarray*}
where (1), (2), and (3) are obtained by multiplying $\sum_{k=0}^{\tau-1}\left(\mathbb{I}(X_{k}=\cdot)-\bar{\vec{\mu}}(\cdot)\right)$
through the second bracket. $c$ is a number between $0$ and $\frac{\tau}{n\bar{T}}$.
To get a bound on (1), we have

\begin{eqnarray*}
\mathbb{E}_{\bar{\vec{\mu}},\bar{T}}\left[\left(\sum_{k=0}^{\tau-1}\left(\mathbb{I}(X_{k}=\cdot)-\bar{\vec{\mu}}(\cdot)\right)\right)\mathbb{I}(\tau\leq\sqrt{n})\right] & = & \mathbb{E}_{\bar{\vec{\mu}},\bar{T}}\left[\left(\sum_{k=0}^{\tau-1}\left(\mathbb{I}(X_{k}=\cdot)-\bar{\vec{\mu}}(\cdot)\right)\right)\right]\\
 & - & \mathbb{E}_{\bar{\vec{\mu}},\bar{T}}\left[\left(\sum_{k=0}^{\tau-1}\left(\mathbb{I}(X_{k}=\cdot)-\bar{\vec{\mu}}(\cdot)\right)\right)\mathbb{I}(\tau>\sqrt{n})\right]\\
 & = & 0-O(e^{-c\sqrt{n}}).
\end{eqnarray*}

Modulus of (2) becomes bounded by
\[
\mathbb{E}_{\bar{\vec{\mu}},\bar{T}}\left|\sum_{k=0}^{\tau-1}\left(\mathbb{I}(X_{k}=\cdot)-\bar{\vec{\mu}}(\cdot)\right)\right|\frac{\tau}{n\bar{T}}\mathbb{I}(\tau\leq\sqrt{n})=O\left(\frac{1}{n}\right).
\]
Modulus of (3) becomes bounded by
\[
\mathbb{E}_{\bar{\vec{\mu}},\bar{T}}\left|\sum_{k=0}^{\tau-1}\left(\mathbb{I}(X_{k}=\cdot)-\bar{\vec{\mu}}(\cdot)\right)\right|\frac{4\tau^{2}}{n^{2}\bar{T}^{2}}\mathbb{I}(\tau\leq\sqrt{n})=O\left(\frac{1}{n}\right).
\]

\end{proof}

\subsubsection{$\sqrt{n}$-averaging}
\begin{lem}
$\lim_{n,m}\frac{1}{\sqrt{m}}\sum_{i=n}^{n+mt-1}\vec{g}_{i}(\bar{\vec{\theta}})=0$
uniformly in each small t-interval.\label{lem:-sqrtn-averaging}\end{lem}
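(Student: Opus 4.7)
The plan is to deduce this uniform averaging statement almost directly from the rate bound $\vec{g}_{i}(\bar{\vec{\theta}})=O(1/i)$ already established in Lemma \ref{lem:rate-of-gn}, combined with an elementary harmonic-sum estimate. Because the only fact about $\vec{g}_{i}(\bar{\vec{\theta}})$ we need is its size, no new computation involving the Markov chain is required; the whole argument reduces to a deterministic scalar bound.

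First I would fix a constant $C>0$ and an integer $N$ such that $\lvert \vec{g}_{i}(\bar{\vec{\theta}})\rvert \le C/i$ for all $i \ge N$ (the existence of $C,N$ being exactly the content of Lemma \ref{lem:rate-of-gn}). Then for any $n \ge N$, any integer $m \ge 1$, and any $t$ in a bounded interval $[0,T_{0}]$, I would estimate
\[
\left\lvert \frac{1}{\sqrt{m}}\sum_{i=n}^{n+\lfloor mt\rfloor-1} \vec{g}_{i}(\bar{\vec{\theta}})\right\rvert
\;\le\; \frac{C}{\sqrt{m}}\sum_{i=n}^{n+\lfloor mt\rfloor-1}\frac{1}{i}
\;\le\; \frac{C}{\sqrt{m}}\,\ln\!\left(1+\frac{\lfloor mt\rfloor}{n}\right),
\]
by comparison of the harmonic sum with the integral of $1/x$.

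The second step is to verify that the right-hand side vanishes as $n,m\to\infty$ uniformly in $t\in[0,T_{0}]$. For this I would split into the usual two regimes. If $mt/n\le 1$, the logarithm is bounded by $\log 2$, so the bound is $O(1/\sqrt{m})\to 0$. If $mt/n>1$, the logarithm is at most $\ln(mT_{0})\le \ln m +\ln T_{0}$, hence the bound is $O((\ln m)/\sqrt{m})\to 0$. In either case the bound depends on $t$ only through $T_{0}$, so convergence is uniform on $[0,T_{0}]$, and this is exactly the ``small $t$-interval'' uniformity demanded.

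The main obstacle, such as it is, is purely bookkeeping: one has to make sure the uniformity is genuinely over every way $n,m\to\infty$ (not just $m\epsilon_{n}\to t$), since the hypothesis of Theorem 10.2.1 of \cite{kushner2003stochastic} is stated as a double limit. Both regimes above handle this cleanly, but one must be careful to use the $O(1/i)$ rate from Lemma \ref{lem:rate-of-gn} rather than a weaker bound such as $\vec{g}_{i}(\bar{\vec{\theta}})\to 0$, because only the summability-adjacent $1/i$ rate gives the logarithm divided by $\sqrt{m}$ needed for convergence when $m$ is much larger than $n$. No martingale or Markov-chain argument is required for this step, so once Lemma \ref{lem:rate-of-gn} is in hand the proof is essentially the display above.
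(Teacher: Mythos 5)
Your argument is correct and is essentially the same as the paper's: both invoke Lemma \ref{lem:rate-of-gn} to reduce the sum to a harmonic tail, bound it by a logarithm, and note that $\log(mt)/\sqrt{m}\to 0$ uniformly on a bounded $t$-interval. Your version is slightly more explicit about the constant, the threshold $N$, and the two regimes $mt\lessgtr n$, but it adds nothing the paper's one-line bound does not already contain.
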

\begin{proof}
By Lemma \ref{lem:rate-of-gn}, the expression becomes $\frac{1}{\sqrt{m}}\sum_{i=n}^{n+mt-1}O\left(\frac{1}{i}\right)=\frac{1}{\sqrt{m}}O(\log(n+mt-1)-\log n)$.
If we maximize $n$ for every $m$, we find that the log difference
is maximized when $n=1$. Hence the limit becomes bounded by 
\[
\frac{1}{\sqrt{m}}O(\log(mt))\rightarrow0
\]
uniformly on a small t-interval. 
\end{proof}

\subsubsection{Hurwitz condition}
\begin{lem}
\label{lem:Jacobian-convergence}$(D\vec{g}_{n})(\bar{\vec{\theta}})\rightarrow(D\bar{\vec{g}})(\bar{\vec{\theta}})$ \end{lem}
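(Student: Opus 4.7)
The plan is to exploit the explicit integral representation of $\vec{f}_{n}$ developed in the proof of Lemma~\ref{lem:uniformness-of-error} and conclude via dominated convergence applied to the differentiated integrand. Since the $T$-component $\bar{h}$ is independent of $n$, its Jacobian trivially matches that of the limit, so only the $\vec{\mu}$-component $\vec{f}_{n}$ requires work. Recalling
\[
\vec{f}_{n}(\vec{\mu},T)(x) = \int_{0}^{\infty} e^{-Tu}\Bigl[v(x,\tfrac{u}{n+1})\,\vec{\mu}^{\tp}(I-e^{-u/(n+1)}Q)^{-1}\delta_{x} - \vec{\mu}^{\tp}(I-e^{-u/(n+1)}Q)^{-1}\vec{v}(\cdot,\tfrac{u}{n+1})\,\vec{\mu}(x)\Bigr]du,
\]
the limit $\bar{\vec{f}}(\vec{\mu},T)$ is obtained by replacing $v(\cdot,u/(n+1))$ with $1$ and $(I-e^{-u/(n+1)}Q)^{-1}$ with $A=(I-Q)^{-1}$.

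First I would differentiate the integrand componentwise with respect to $\vec{\mu}$ and $T$. Every resulting expression is a polynomial of degree at most two in $\vec{\mu}$ multiplied by either $e^{-Tu}$ or $u\,e^{-Tu}$, with coefficient matrices built from $v(\cdot,u/(n+1))$ and $(I-e^{-u/(n+1)}Q)^{-1}$. Second, I would produce a uniform-in-$n$ integrable majorant for these derivatives in a small neighborhood of $\bar{\vec{\theta}}=(\bar{\vec{\mu}},\bar{T})$: since $Q\ge 0$ entry-wise and $e^{-u/(n+1)}\le 1$, the Neumann series gives the entry-wise domination
\[
(I-e^{-u/(n+1)}Q)^{-1} \;\le\; (I-Q)^{-1},
\]
while $|v(\cdot,s)|\le 1$ for $s\ge 0$; combined with the damping $e^{-Tu}$ and $\bar{T}>0$, this produces an integrable bound that does not depend on $n$. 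That justifies interchanging $\partial/\partial\vec{\mu}$ and $\partial/\partial T$ with $\int_{0}^{\infty}$, uniformly in $n$.

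Third, for each fixed $u>0$, as $n\to\infty$ one has $v(x,u/(n+1))\to 1$ and $(I-e^{-u/(n+1)}Q)^{-1}\to (I-Q)^{-1}$, together with analogous convergence of all their first-order derivatives in $\vec{\mu}$ and $T$ (the dependence on $\vec{\mu}$ and $T$ is smooth and the $n$-dependence enters only through the parameter $u/(n+1)\to 0$). Applying dominated convergence to each partial derivative then yields $(D\vec{f}_{n})(\bar{\vec{\theta}})\to (D\bar{\vec{f}})(\bar{\vec{\theta}})$, and appending the trivial $T$-component completes the claim.

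The only delicate point is obtaining the uniform-in-$n$ integrable majorant for the derivatives of the integrand; once the entry-wise Neumann bound above and the exponential factor $e^{-\bar{T}u}$ are in hand this is routine, and the rest of the argument is a mechanical application of Lebesgue's theorem.
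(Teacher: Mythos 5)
Your proof is correct, but it takes a genuinely different route from the paper's. You return to the explicit integral representation of $\vec{f}_{n}$ derived inside the proof of Lemma~\ref{lem:uniformness-of-error}, differentiate under the integral sign, and apply dominated convergence, using the uniform-in-$n$ entrywise Neumann bound $(I-e^{-u/(n+1)}Q)^{-1}\le(I-Q)^{-1}$ together with $|v|\le1$ and the damping $e^{-Tu}$ (with $T$ near $\bar{T}>0$) to produce an $n$-independent integrable majorant for the differentiated integrand. The paper instead argues abstractly and does not reopen the integral: it takes the \emph{conclusion} of Lemma~\ref{lem:uniformness-of-error} (the Taylor remainder $o(|\vec{\theta}-\bar{\vec{\theta}}|)$ at $\bar{\vec{\theta}}$ is uniform in $n$), combines it with the pointwise convergences $\vec{g}_{n}(\vec{\theta})\to\bar{\vec{g}}(\vec{\theta})$ and $\vec{g}_{n}(\bar{\vec{\theta}})\to0$, subtracts the Taylor expansion of $\bar{\vec{g}}$, and obtains $\limsup_{n}\bigl|(D\vec{g}_{n}-D\bar{\vec{g}})(\bar{\vec{\theta}})\bigr|\le o(r)/r$ for every small radius $r$, then lets $r\to0$. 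The two approaches buy different things: the paper's is shorter and purely a logical consequence of facts already established; yours is more explicit, in fact establishes convergence of the Jacobians at every $\vec{\theta}$ in a neighborhood of $\bar{\vec{\theta}}$ rather than just at $\bar{\vec{\theta}}$, and essentially reproves (a slightly stronger version of) the uniformity lemma along the way rather than citing it. Both are valid, and your bounding of the resolvent and the careful treatment of the $u\,e^{-Tu}$ factor from the $T$-derivative are exactly the right technical points to check.
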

\begin{proof}
By Lemma \ref{lem:uniformness-of-error} we know 
\[
\vec{g}_{n}(\vec{\theta})=\vec{g}_{n}(\bar{\vec{\theta}})+(D\vec{g}_{n})(\bar{\vec{\theta}})(\vec{\theta}-\bar{\vec{\theta}})+o(|\vec{\theta}-\bar{\vec{\theta}}|).
\]
If we take the limit as $n\rightarrow\infty$ we get
\[
\bar{\vec{g}}(\vec{\theta})=0+\lim_{n}(D\vec{g}_{n})(\bar{\vec{\theta}})(\vec{\theta}-\bar{\vec{\theta}})+o(|\vec{\theta}-\bar{\vec{\theta}}|).
\]
Expand the left hand side by Taylor series and get
\begin{align*}
(D\bar{\vec{g}})(\bar{\vec{\theta}})(\vec{\theta}-\bar{\vec{\theta}}) & =\lim_{n}(D\vec{g}_{n})(\bar{\vec{\theta}})(\mbox{\ensuremath{\vec{\theta}}-\ensuremath{\bar{\vec{\theta}}}})+o(\left|\vec{\theta}-\bar{\vec{\theta}}\right|)\\
\limsup_{n}\left|(D\vec{g}_{n}-D\bar{\vec{g}})(\bar{\vec{\theta}})\right| & =\frac{o(\left|\vec{\theta}-\bar{\vec{\theta}}\right|)}{\left|\vec{\theta}-\bar{\vec{\theta}}\right|}
\end{align*}
and the right hand side is arbitrarily small so the $\limsup$ is
0.\end{proof}
\begin{thm}
\label{thm:hurwitz}Let $A\triangleq(D\bar{\vec{g}})(\bar{\vec{\theta}})$.
$(A+I/2)$ is Hurwitz when the eigenvalues of the matrix $Q$ satisfies
the condition
\[
\max_{\tilde{\lambda}\in\textrm{NPV }}Re\left(\frac{1}{1-\tilde{\lambda}}\right)<\frac{1}{2}\left(\frac{1}{1-\lambda_{PV}}\right).
\]
\end{thm}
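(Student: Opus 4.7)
The plan is to compute the Jacobian $A := (D\bar{\vec{g}})(\bar{\vec{\theta}})$ in block form in the coordinates $(\vec{\mu},T)$, diagonalize the nontrivial block using the spectral data of $Q$, and translate the spectrum of $A + I/2$ into the claimed eigenvalue inequality. Writing $M := (I-Q)^{-1}$, we have $\bar{\vec{f}}(\vec{\mu},T) = T^{-1}[M^{\tp}\vec{\mu} - (\vec{\mu}^{\tp}M\vec{1})\vec{\mu}]$ and $\bar{h}(\vec{\mu},T) = \vec{\mu}^{\tp}M\vec{1} - T$. At the stationary point we have both $\bar{\vec{f}} = 0$ and $M^{\tp}\bar{\vec{\mu}} = \bar{T}\,\bar{\vec{\mu}}$ with $\bar{T} = 1/(1-\lambda_{PV})$, and the first of these kills $\partial_T\bar{\vec{f}}$ at $(\bar{\vec{\mu}},\bar{T})$. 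Hence $A$ is block lower-triangular,
\[
A = \begin{pmatrix} B & \vec{0} \\ (M\vec{1})^{\tp} & -1 \end{pmatrix}, \qquad B := \frac{1}{\bar{T}}M^{\tp} - I - \frac{1}{\bar{T}}\bar{\vec{\mu}}(M\vec{1})^{\tp},
\]
so $\mathrm{spec}(A) = \mathrm{spec}(B)\cup\{-1\}$, and the $T$-block already contributes $-1 + 1/2 < 0$ to $A+I/2$.

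Next I would diagonalize $B$ via left eigenvectors of $Q$. Let $u_i^{\tp}Q = \lambda_i u_i^{\tp}$, so $M^{\tp}u_i = u_i/(1-\lambda_i)$, with $u_1 = \bar{\vec{\mu}}$ and $\lambda_1 = \lambda_{PV}$. Using $(M\vec{1})^{\tp}\bar{\vec{\mu}} = \vec{1}^{\tp}M^{\tp}\bar{\vec{\mu}} = \bar{T}$ one finds $B\bar{\vec{\mu}} = -\bar{\vec{\mu}}$. For $i \geq 2$ set
\[
\tilde{u}_i := u_i - (\vec{1}^{\tp}u_i)\,\bar{\vec{\mu}},
\]
so $\vec{1}^{\tp}\tilde{u}_i = 0$. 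A direct expansion gives $Bu_i = \bigl(\tfrac{1}{\bar{T}(1-\lambda_i)}-1\bigr)u_i - \tfrac{\vec{1}^{\tp}u_i}{\bar{T}(1-\lambda_i)}\bar{\vec{\mu}}$, and matching $u_i$- and $\bar{\vec{\mu}}$-coefficients yields $B\tilde{u}_i = \mu_i \tilde{u}_i$ with
\[
\mu_i \;=\; \frac{1}{\bar{T}(1-\lambda_i)} - 1 \;=\; \frac{\lambda_i - \lambda_{PV}}{1-\lambda_i}.
\]
Because the ODE keeps $\vec{1}^{\tp}\vec{\mu}=1$ invariant, only the spectrum of $B$ on the tangent space $\{v:\vec{1}^{\tp}v=0\}$ is relevant, and that spectrum is precisely $\{\mu_i\}_{i\geq 2}$; the eigenvalue $-1$ along $\bar{\vec{\mu}}$ is spurious and falls outside the tangent space.

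Finally I would verify the Hurwitz condition algebraically. Using $\mu_i = -1 + (1-\lambda_{PV})/(1-\lambda_i)$ and the fact that $1-\lambda_{PV}$ is real and positive, $\mathrm{Re}(\mu_i) + 1/2 < 0$ is equivalent to
\[
(1-\lambda_{PV})\,\mathrm{Re}\!\left(\frac{1}{1-\lambda_i}\right) \;<\; \frac{1}{2},
\]
i.e., to
\[
\mathrm{Re}\!\left(\frac{1}{1-\lambda_i}\right) \;<\; \frac{1}{2}\left(\frac{1}{1-\lambda_{PV}}\right),
\]
which is exactly the hypothesis on every non-principal eigenvalue. Combined with the $-1/2$ from the $T$-block this shows $A+I/2$ is Hurwitz.

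The main obstacle is the bookkeeping around the normalization constraint $\vec{1}^{\tp}\vec{\mu}=1$: the unrestricted $B$ on $\mathbb{R}^m$ carries an irrelevant $-1$ eigenvalue along $\bar{\vec{\mu}}$, while the non-principal left eigenvectors $u_i$ generally satisfy $\vec{1}^{\tp}u_i \neq 0$ and therefore do not themselves live in the tangent space. The shift $u_i \mapsto u_i - (\vec{1}^{\tp}u_i)\bar{\vec{\mu}}$ is the decisive trick; it preserves the eigenvalue formula while placing the eigenvector inside $\{\vec{1}^{\tp}v=0\}$. If $Q$ is defective, the same shift applied to generalized left eigenvectors produces a Jordan form for $B$ on the tangent space with diagonal entries $\mu_i$, so the spectral criterion is unaffected.
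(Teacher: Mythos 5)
Your proof is correct and follows the same overall strategy as the paper's: compute the Jacobian in block lower-triangular form, set up the $\vec{\mu}$-block $J$ (your $B$), relate its spectrum to that of $(I-Q^{\tp})^{-1}$ via a shift by multiples of $\bar{\vec{\mu}}$, and read off the eigenvalue relation $\lambda_J=\lambda_B/\beta-1$. The one place you diverge, and improve slightly on, the paper's argument: the paper establishes a two-way bijection between eigenvectors of $J$ and of $B:=(I-Q^{\tp})^{-1}$, and the forward direction (from $J$ to $B$) forces it to rule out $\lambda_J=0$ separately (Lemma \ref{lem:eigenvalue-not-zero}). You work in one direction only, from left eigenvectors of $Q$ to eigenvectors of $J$ via $\tilde{u}_i=u_i-(\vec{1}^{\tp}u_i)\bar{\vec{\mu}}$, and then observe that $\{\bar{\vec{\mu}}\}\cup\{\tilde{u}_i\}_{i\ge2}$ is a basis (a shear of the $u_i$-basis), so no converse direction or nonvanishing lemma is needed. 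Your remark that the extra $-1$ eigenvalue (along $\bar{\vec{\mu}}$ and from the $T$-block) is harmless because $-1+1/2<0$ matches the paper's remark following Theorem \ref{thm:hurwitz}, and your shift formula $r=-\vec{1}^{\tp}u_i$ is the correct one (the paper's displayed choice of $r$ in the converse direction appears to contain a typo; plugging in gives your value). The defective-$Q$ caveat via generalized eigenvectors is a reasonable addition that the paper leaves implicit.
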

\begin{proof}
Again let us recall from equation \ref{eq:full_ODE} that $\bar{\vec{g}}$
contains the $\bar{\vec{f}}$ component as well as the \textbf{$\bar{h}$
}component. With the notation $B\triangleq(I-Q^{T})^{-1}$, the Jacobians
are given by
\begin{align*}
\nabla_{\vec{\mu}}\bar{\vec{f}}(\vec{\mu},T) & =\frac{1}{T}\left[B-(\vec{1}^{\tp}B\vec{\mu})I-\vec{\mu}\vec{1}^{\tp}B\right]\\
\nabla_{T}\bar{\vec{f}}(\vec{\mu},T) & =-\frac{1}{T^{2}}\left[B\vec{\mu}-(\vec{1}^{\tp}B\vec{\mu})\vec{\mu}\right]\\
\nabla_{\vec{\mu}}\bar{h}(\vec{\mu},T) & =\vec{1}^{\tp}B\\
\nabla_{T}\bar{h}(\vec{\mu},T) & =-1\quad\textrm{(scalar)}.
\end{align*}
At the stationary point $(\bar{\vec{\mu}},\bar{T})$, if we define
$\beta=\vec{1}^{\tp}B\bar{\vec{\mu}}=\bar{T}$, the $\bar{\vec{f}}$
component becomes
\begin{align*}
\nabla_{\vec{\mu}}\bar{\vec{f}}(\overline{\vec{\mu}},\bar{T}) & =\frac{1}{\beta}\left[B-\beta I-\bar{\vec{\mu}}\vec{1}^{\tp}B\right]\quad\textrm{call this matrix J}\\
\nabla_{T}\bar{\vec{f}}(\bar{\vec{\mu}},\bar{T}) & =0.
\end{align*}

We will now established a 1-1 correspondence between the eigenvectors
of $J$ and the eigenvectors of $B$. The overall Jacobian would,
in block form, look like
\[
\left[\begin{array}{cc}
J & \vec{0}^{\tp}\\
\vec{1}^{\tp}B & -1
\end{array}\right].
\]
This has the same eigenvalues as $J$ with the addition of the eigenvalue
-1. That would not violate the Hurwitz condition. Hence we need to
ensure that $J+\frac{I}{2}$ is Hurwitz. 

Given a vector $\vec{y}$ such that $J\vec{y}=\lambda_{J}\vec{y}$
and $\vec{y}$ linearly independent of $\bar{\vec{\mu}}$. Define
$\vec{x}\triangleq\vec{y}+r\bar{\vec{\mu}}$. That means $B\vec{x}=\beta(\lambda_{J}+1)\vec{y}+(r\beta+\vec{1}^{\tp}B\vec{y})\bar{\vec{\mu}}$.
The correct $r$ that would make $\vec{x}$ an eigenvector of $B$
is such that $r\beta\lambda_{J}=\vec{1}^{\tp}B\vec{y}$. Here $\beta$
is the principal eigenvalue of $B$ so it is strictly positive. That
means there exists such $r$ if $\lambda_{J}\neq0$. The corresponding
eigenvalue for $B$ would be $\lambda_{B}\triangleq\beta(\lambda_{J}+1)$.
If $\vec{y}$ is a multiple of $\bar{\vec{\mu}}$, then its $J$ eigenvalue
would be $-1$ and its $B$ eigenvalue would be $\beta$. Below in
Lemma \ref{lem:eigenvalue-not-zero}, we show that $\lambda_{J}$
can never be $0$. This would imply every eigenvector of $J$ is an
eigenvector of $B$.

Conversely, given a vector $\vec{z}$, $B\vec{z}=\lambda_{B}\vec{z}$,
we can define $\vec{u}\triangleq\vec{z}+r\bar{\vec{\mu}}$. If we
choose $r=\frac{\beta+\lambda_{B}(\vec{1}^{\tp}\vec{z})}{\beta-\lambda_{B}}$
then $J\vec{u}=\left(\frac{\lambda_{B}}{\beta}-1\right)\vec{u}$.
This would work when $\vec{z}$ is not the principal right-eigenvector
of $B$. If it is, i.e. $\vec{z}=\bar{\vec{\mu}}$, then trivially
$J\bar{\vec{z}}=-\bar{\vec{z}}$.

Hence we can conclude that there is a one-to-one correspondence between
the eigenvector/eigenvalues of $J$ and $B$ and the relation is given
by
\begin{align*}
\lambda_{J} & =\frac{\lambda_{B}}{\beta}-1\quad\textrm{for \ensuremath{\lambda_{B}\neq\beta}or \ensuremath{\lambda_{J}\neq0}}\\
\lambda_{J}=-1 & \textrm{ when \ensuremath{\lambda_{B}=\beta}}.
\end{align*}
Hence, in order to ensure that $J+\frac{I}{2}$ is Hurwitz, we require
\begin{align*}
Re(\frac{\lambda_{B}}{\beta}-\frac{1}{2}) & <0\quad\forall\lambda_{B}\neq\beta\\
\Rightarrow Re(\lambda_{B}) & <\frac{\beta}{2}\quad\forall\lambda_{B}\neq\beta\\
\Rightarrow\max_{\tilde{\lambda}\in\textrm{NPV }} & Re\left(\frac{1}{1-\tilde{\lambda}}\right)<\frac{1}{2}\left(\frac{1}{1-\lambda_{PV}}\right).
\end{align*}
\end{proof}
\begin{rem}
If you carefully examine the proof in \cite{kushner2003stochastic},
you will notice that the Jacobian $J$ is drift of an Ornstein-Uhlenbeck
process $U(t)$ that lives on the subspace orthogonal to $\vec{1}$.
Hence, in order for the OU process to have a stationary distribution,
it is enough to require that all the eigenvectors that live on this
hyperplane have real part of their eigenvalue less than $\frac{1}{2}$.
If we are given an eigenvector $J\vec{y}=\lambda\vec{y}$, we can
dot this with $\vec{1}$ and arrive at
\begin{align*}
J\vec{y} & =\lambda\vec{y}\\
\Rightarrow-\vec{1}^{\tp}\vec{y} & =\lambda\vec{1}^{\tp}\vec{y}.
\end{align*}
This implies that if $\lambda\neq-1,$ then $\vec{1}^{\tp}\vec{y}=0$
which is in $\vec{y}\in\vec{1}^{\bot}$ so it is a relevant eigenvector.
If $\lambda=-1$, it would not affect the Hurwitz condition anyways.
So our sufficient condition above is not overly strong.\end{rem}
\begin{lem}
\label{lem:eigenvalue-not-zero}$\lambda_{J}\neq0$\end{lem}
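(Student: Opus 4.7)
The plan is to argue by contradiction: suppose $J\vec{y}=\vec{0}$ for some nonzero $\vec{y}$. Unpacking the definition $J=\frac{1}{\beta}\bigl[B-\beta I-\bar{\vec{\mu}}\vec{1}^{\tp}B\bigr]$, the equation $J\vec{y}=\vec{0}$ is equivalent to
\[
B\vec{y}=\beta\vec{y}+c\,\bar{\vec{\mu}},\qquad c\triangleq\vec{1}^{\tp}B\vec{y}.
\]
The key fact I would exploit is that, because $Q$ is irreducible over the transient states and $\rho(Q)<1$, the matrix $B=(I-Q^{\tp})^{-1}=\sum_{n\ge 0}(Q^{\tp})^{n}$ is strictly positive entrywise. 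Hence the Perron--Frobenius theorem applies to $B$ in its strongest form: its principal eigenvalue $\beta$ is \emph{algebraically} simple (not merely geometrically), and its principal eigenspace is spanned by $\bar{\vec{\mu}}$.

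From there I would split into two cases. If $c=0$, then $B\vec{y}=\beta\vec{y}$, so $\vec{y}$ lies in the one-dimensional principal eigenspace of $B$ and must equal $\alpha\bar{\vec{\mu}}$ for some $\alpha\ne 0$. But the direct calculation $J\bar{\vec{\mu}}=\frac{1}{\beta}\bigl[\beta\bar{\vec{\mu}}-\beta\bar{\vec{\mu}}-\beta\bar{\vec{\mu}}\bigr]=-\bar{\vec{\mu}}$, already recorded inside the proof of Theorem \ref{thm:hurwitz}, yields $J\vec{y}=-\alpha\bar{\vec{\mu}}\ne\vec{0}$, a contradiction. If instead $c\ne 0$, then $(B-\beta I)\vec{y}=c\bar{\vec{\mu}}$ while $(B-\beta I)\bar{\vec{\mu}}=\vec{0}$, which makes $\vec{y}$ a genuine generalized eigenvector of $B$ of rank $2$ at eigenvalue $\beta$. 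Algebraic simplicity of $\beta$ forbids any such generalized eigenvector, closing the second case.

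The main (and essentially only) obstacle is invoking the correct form of Perron--Frobenius: geometric simplicity alone would leave the case $c\ne 0$ open, so one must verify that $B$ is strictly positive, not merely non-negative and irreducible. This is where irreducibility of $Q$ together with $\rho(Q)<1$ (so that the Neumann series for $B$ converges with every entry eventually struck positive by some power of $Q^{\tp}$) enters in an essential way; both hypotheses are part of the standing assumptions on the absorbing chain, so once this observation is made the rest of the argument is routine bookkeeping around the displayed identity above.
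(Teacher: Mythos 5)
Your proof is correct, and it takes a genuinely different and arguably cleaner route than the paper's. You rewrite $J\vec{y}=\vec{0}$ as $B\vec{y}=\beta\vec{y}+c\,\bar{\vec{\mu}}$ and dispatch both cases via algebraic simplicity of the Perron eigenvalue $\beta$: the case $c=0$ collapses to $\vec{y}\in\operatorname{span}\{\bar{\vec{\mu}}\}$, which the identity $J\bar{\vec{\mu}}=-\bar{\vec{\mu}}$ rules out, while the case $c\neq 0$ produces a rank-$2$ generalized eigenvector at $\beta$, forbidden by simplicity. The paper instead passes to the \emph{left} eigenvector problem for $PB$ with $P=I-\bar{\vec{\mu}}\vec{1}^{\tp}$, works through the solvability condition $\vec{z}^{\tp}\bar{\vec{\mu}}=0$ for the equation $\vec{x}^{\tp}P=\vec{z}^{\tp}$, forces the scalar coefficient $c$ in the decomposition $\vec{x}^{\tp}=c\vec{1}^{\tp}+\vec{z}^{\tp}$ to vanish, and then derives a contradiction between $\vec{x}^{\tp}\bar{\vec{\mu}}=0$ and the strict positivity of the Perron left-eigenvector. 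Your argument avoids all of this bookkeeping by directly observing that the purported null vector of $J$ would be a rank-$2$ generalized eigenvector of $B$, a shorter path to the same contradiction.

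One small correction to your framing: you suggest that strict positivity of $B$ is essential because ``geometric simplicity alone would leave the case $c\neq 0$ open.'' In fact, the Perron--Frobenius theorem already guarantees \emph{algebraic} simplicity of $\rho(B)$ for any irreducible non-negative matrix (primitivity is not required for that part, only for the dominance over other moduli). So irreducibility of $Q$ already suffices for your generalized-eigenvector step; strict positivity of $B=\sum_{n\ge 0}(Q^{\tp})^{n}$ (which indeed holds here) gives you more than you need, but your argument does not hinge on that extra strength.
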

\begin{proof}
Assume there exists a $\vec{y}$ such that $J\vec{y}=0$. That means
\begin{align*}
B\vec{y} & =\beta\vec{y}+\bar{\vec{\mu}}\vec{1}^{\tp}B\vec{y}\\
(I-\bar{\mu}\vec{1}^{\tp})B\vec{y} & =\beta\vec{y}.
\end{align*}
We recognize that $P\triangleq(I-\bar{\vec{\mu}}\vec{1}^{\tp})$ is
a (non-orthogonal) projection. Also $P\bar{\vec{\mu}}=\vec{0}$ and
$\vec{1}^{\tp}P=\vec{0}$. This means $\beta$ is an eigenvalue of
$PB$, that means there would exists a left eigenvector $\vec{x}$
such that
\begin{align*}
\vec{x}^{\tp}PB & =\beta\vec{x}^{\tp}\\
\vec{x}^{\tp}P & =\beta\vec{x}^{\tp}B^{-1}.
\end{align*}
We decouple this into two equations
\begin{align*}
\vec{x}^{\tp}P & =\vec{z}\\
\vec{z}^{\tp} & =\beta\vec{x}^{\tp}B^{-1}.
\end{align*}
If the equation $\vec{x}^{\tp}P=\vec{z}^{\tp}$ has a solution $x$,
we must require $\vec{z}^{\tp}\bar{\vec{\mu}}=0$. That would mean
$\vec{x}$ can be decomposed as a fundamental solution added to a
null solution. The null space is $c\vec{1}$ and $\vec{z}^{\tp}P=\vec{z}^{\tp}-\vec{z}^{\tp}\bar{\vec{\mu}}\vec{1}^{\tp}=\vec{z}^{\tp}$.
So $\vec{x}^{\tp}=c\vec{1}^{\tp}+\vec{z}^{\tp}$ would span the entire
solution space. However, remember that we're interested in $\vec{z}^{\tp}=\beta\vec{x}^{\tp}B^{-1}$.
We dot this with $\bar{\vec{\mu}}$ and arrive at
\begin{align*}
0 & =\beta\vec{x}^{\tp}(I-Q^{\tp})\bar{\vec{\mu}}\\
0 & =\beta(\vec{x}^{\tp}\bar{\vec{\mu}}-\lambda\vec{x}^{\tp}\bar{\vec{\mu}})\\
\vec{x}^{\tp}\bar{\vec{\mu}} & =\lambda\vec{x}^{\tp}\bar{\vec{\mu}}\\
\vec{x}^{\tp}\bar{\vec{\mu}} & =0\quad\textrm{because \ensuremath{0<\lambda<1}}.
\end{align*}
Now we can conclude that $c=0$:
\begin{align*}
\vec{x}^{\tp} & =c\vec{1}^{\tp}+\vec{z}^{\tp}\\
\vec{x}^{\tp}\bar{\vec{\mu}} & =c\vec{1}^{\tp}\bar{\vec{\mu}}+\vec{z}^{\tp}\bar{\vec{\mu}}\\
0 & =c+0.
\end{align*}
This means $\vec{x}^{\tp}=\vec{z}^{\tp}=\beta\vec{x}^{\tp}B^{-1}\Rightarrow\vec{x}^{\tp}B=\beta\vec{x}^{\tp}$.
This would mean $\vec{x}^{\tp}$ is the principle left-eigenvector
and all its components are strictly positive. In that case, it would
be impossible to have $\vec{x}^{\tp}\bar{\vec{\mu}}=0$. So there
is no eigenvalue $\beta$ for the matrix $PB$. Hence $J$ cannot
have a zero eigenvalue.
\end{proof}

\subsubsection{Quadratic variation of the martingales}
\begin{lem}
\label{lem:quadratic-variation}Define $\delta\vec{M}_{n}=\vec{W}_{n}-\mathbb{E}_{n}(\vec{W}_{n})$.
There exists a $p>0$ such that
\[
\sup_{n}\mathbb{E}|\delta\vec{M}_{n}|^{2+p}<\infty
\]
and a non-negative definite matrix $\Sigma$ such that
\[
\mathbb{E}_{n}\delta\vec{M}_{n}\delta\vec{M}_{n}^{\tp}\rightarrow\Sigma.
\]
\end{lem}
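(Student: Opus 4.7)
The plan is to address the two claims separately, both reducing to properties of the hitting time $\tau$ established in the earlier lemmas together with the a.s.\ convergence $(\vec{\mu}_n, T_n) \to (\bar{\vec{\mu}}, \bar{T})$ from Theorem \ref{thm:DTMC-unprojected}.

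For the uniform $(2+p)$-th moment bound, I would imitate Lemma \ref{lem:uniform-boundedness}: the pointwise bounds $|\vec{Y}_n| \le \tau$ and $|Z_n| \le \tau + T_n$ give $|\vec{W}_n|^{2+p} \le C(\tau^{2+p} + T_n^{2+p})$. Because $Q$ is irreducible substochastic on a finite state space, its spectral radius is strictly less than one and $\sup_{\vec{\mu}\in H} \mathbb{P}_{\vec{\mu}}(\tau > k)$ decays geometrically in $k$, so $\sup_{\vec{\mu}\in H}\mathbb{E}_{\vec{\mu}}[\tau^{q}] < \infty$ for every $q \ge 0$. Since $\vec{\mu}_n$ always lies in $H$, an application of Jensen's inequality to $T_n = \frac{1}{n+1}\sum_{k=0}^n \tau^{(k)}$ yields $\mathbb{E}[T_n^{2+p}] \le \sup_{\vec{\mu}\in H}\mathbb{E}_{\vec{\mu}}[\tau^{2+p}]$. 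Taking unconditional expectations gives $\sup_n \mathbb{E}|\vec{W}_n|^{2+p} < \infty$, and Jensen's inequality transfers the bound to $\delta\vec{M}_n$.

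For the covariance limit, I would define the conditional covariance kernel
\[
\Sigma_n(\vec{\mu}, T) \triangleq \mathbb{E}\bigl[(\vec{W}_n - \vec{g}_n(\vec{\mu},T))(\vec{W}_n - \vec{g}_n(\vec{\mu},T))^\tp \,\bigm|\, \vec{\mu}_n = \vec{\mu},\, T_n = T\bigr],
\]
so that $\mathbb{E}_n[\delta\vec{M}_n \delta\vec{M}_n^\tp] = \Sigma_n(\vec{\mu}_n, T_n)$. The only explicit $n$-dependence in $\vec{Y}_n$ sits in the $\tfrac{\tau}{n+1}$ correction inside the denominator; letting $n \to \infty$ and applying dominated convergence (with a power of $\tau$ as envelope, made integrable by the geometric tail) yields a continuous pointwise limit $\bar{\Sigma}(\vec{\mu},T)$, whose denominator is simply $T$. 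The uniform $\tau$-moment bound on $H$ promotes this to \emph{uniform} convergence on any compact neighborhood of $(\bar{\vec{\mu}}, \bar{T})$ (where $T$ stays bounded away from zero). Writing
\[
\Sigma_n(\vec{\mu}_n, T_n) - \Sigma = \bigl[\Sigma_n - \bar{\Sigma}\bigr](\vec{\mu}_n, T_n) + \bigl[\bar{\Sigma}(\vec{\mu}_n, T_n) - \bar{\Sigma}(\bar{\vec{\mu}}, \bar{T})\bigr],
\]
the first bracket vanishes by the uniform convergence (the iterates lie eventually in such a compact neighborhood a.s.\ by Theorem \ref{thm:DTMC-unprojected} and Lemma \ref{lem:-Tn-boundedness}), and the second vanishes by continuity of $\bar{\Sigma}$ together with a.s.\ convergence of the iterates. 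Setting $\Sigma \triangleq \bar{\Sigma}(\bar{\vec{\mu}}, \bar{T})$ closes the argument; non-negative definiteness is automatic as $\Sigma$ is the limit of conditional covariance matrices.

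The main technical obstacle is the uniform convergence $\Sigma_n \to \bar{\Sigma}$ on a compact neighborhood of the limit. This is a dominated-convergence argument, but one must verify that after the $\tfrac{\tau}{n+1}$ perturbation inside the denominator the integrand is still controlled by a uniformly integrable envelope; the high-moment control on $\tau$ inherited from finite-state irreducibility makes this step routine, and no new ingredient beyond what has already been developed is required.
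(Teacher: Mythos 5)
Your proposal is correct and follows essentially the same strategy as the paper: control moments of $\vec{W}_n$ by moments of the absorption time $\tau$ (made finite by finite-state irreducibility), and obtain the covariance limit via dominated convergence combined with the a.s.\ convergence of $(\vec{\mu}_n, T_n)$ to $(\bar{\vec{\mu}}, \bar{T})$. One small technical refinement on your side: by working with the deterministic conditional covariance kernel $\Sigma_n(\vec{\mu},T)$ rather than with the random variables $\vec{Y}_n(\vec{\theta}_n)$, $Z_n(\vec{\theta}_n)$ directly, you sidestep the coupling / continuity-in-$\vec{\theta}$ construction that the paper invokes somewhat loosely, but the core argument is the same.
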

\begin{proof}
Recall that $\vec{W}_{n}=(\vec{Y}_{n},Z_{n})$ in Equation \ref{eq:mainalg_formal}.
Pick $p=2$, we can use Jensen's inequality and arrive at
\[
|\delta\vec{M}_{n}|^{4}\leq2\left(|\vec{Y}_{n}-\mathbb{E}_{n}(\vec{Y}_{n})|^{4}+|Z_{n}-\mathbb{E}_{n}(Z_{n})|^{4}\right)\leq16\left(|\vec{Y}_{n}|^{4}+|\mathbb{E}_{n}(\vec{Y}_{n})|^{4}+|Z_{n}|^{4}+|\mathbb{E}_{n}Z_{n}|^{4}\right).
\]
Due to the facts\end{proof}
\begin{enumerate}
\item $|\vec{Y}_{n}|\leq\tau(\vec{\mu}_{n})$, here $\tau(\vec{\mu}_{n})$
is the stopping time given the initial distribution $\mu_{n}$,

\begin{enumerate}
\item $\mathbb{E}_{n}|\vec{Y}_{n}|^{4}\leq\mathbb{E}_{n}(|\vec{Y}_{n}|^{4})$,
\item $|Z_{n}|\leq\tau(\vec{\mu}_{n})$,
\item $\mathbb{E}_{n}(Z_{n})^{4}\leq\mathbb{E}_{n}(Z_{n}^{4})$,
\item $\tau(\vec{\mu})\leq_{a.s.}\tilde{\tau}\quad\forall\vec{\mu}$ where
$\tilde{\tau}=\max\{\tau(\vec{\mu})|\vec{\mu}=\vec{\delta}_{x},\forall x\in S\}$
by stochastic dominance followed by Skorohod representation,
\end{enumerate}
\end{enumerate}
\begin{proof}
we can conclude that $\sup_{n}\mathbb{E}|\delta\vec{M}_{n}|^{4}<\infty$.

We now use dominated theorem on $\mathbb{E}_{n}\delta\vec{M}_{n}\delta\vec{M}_{n}^{\tp}$.
We can think in block matrix form 
\[
\delta\vec{M}_{n}\delta\vec{M}_{n}^{\tp}=\left[\begin{array}{cc}
(\vec{Y}_{n}(\vec{\theta}_{n})-\vec{f}_{n}(\vec{\theta}_{n}))(\vec{Y}_{n}(\vec{\theta}_{n})-\vec{f}_{n}(\vec{\theta}_{n}))^{\tp} & (\vec{Y}_{n}(\vec{\theta}_{n})-\vec{f}(\vec{\theta}_{n}))(Z_{n}(\vec{\theta}_{n})-h_{n}(\vec{\theta}_{n}))^{\tp}\\
(Z_{n}(\vec{\theta}_{n})-h_{n}(\vec{\theta}_{n}))(\vec{Y}_{n}(\vec{\theta}_{n})-\vec{f}_{n}(\vec{\theta}_{n}))^{\tp} & (Z_{n}(\vec{\theta}_{n})-h_{n}(\vec{\theta}_{n}))(Z_{n}(\vec{\theta}_{n})-h_{n}(\vec{\theta}_{n}))^{\tp}
\end{array}\right].
\]
In absolute value conditioned on $\mathscr{F}_{n}$, each entry of
this matrix is dominated by $2\tau(\vec{\theta}_{n})\leq2\tilde{\tau}$.
$\delta\vec{M}_{n}\delta\vec{M}_{n}^{\tp}$ is dominated entry-wise
by $2\tilde{\tau}$ uniformly for all possible admissible $\vec{\theta}_{n}$.
With a few more steps, we can show the convergence to a non-negative
matrix.

$\vec{f}_{n}(\vec{\theta}_{n})\rightarrow\vec{0}$ and $h_{n}(\vec{\theta}_{n})\rightarrow0$
by Lemma \ref{lem:rate-of-gn}.

$Z_{n}(\vec{\theta})=\tau(\vec{\theta})-T$ can be represented in
a way that is continuous in $\vec{\theta}$ (by writing $\tau$ as
a mixture of the initial starting points). Hence $Z_{n}(\vec{\theta}_{n})\rightarrow Z(\bar{\vec{\theta}})=\tau(\bar{\vec{\theta}})-\bar{T}$.

$\vec{Y}_{n}(\vec{\theta})=\frac{\sum_{k=0}^{\tau(\vec{\theta})-1}\left(\mathbb{I}(X_{k}=\cdot|X_{0}\sim\vec{\mu})-\vec{\mu}\right)}{T+\frac{\tau}{n+1}}$
can also be written in a way that is continuous in $\vec{\theta}$
and uniformly convergent (over $\vec{\theta})$ the random variable
$\vec{Y}(\vec{\theta})$. Hence 
\[
\vec{Y}_{n}(\vec{\theta}_{n})\rightarrow_{n\rightarrow\infty}\frac{\sum_{k=0}^{\tau(\bar{\vec{\mu}})-1}\left(\mathbb{I}(X_{k}=\cdot|X_{0}\sim\bar{\vec{\mu}})-\bar{\vec{\mu}}\right)}{\bar{T}}\triangleq\vec{Y}(\bar{\vec{\theta}}).
\]

$\left(\begin{array}{c}
\vec{Y}(\bar{\vec{\theta}})\\
Z(\bar{\vec{\theta}})
\end{array}\right)\left(\begin{array}{cc}
\vec{Y}(\bar{\vec{\theta}}) & Z(\bar{\vec{\theta}})\end{array}\right)$ can also be shown to be entry-wise dominated, hence its expected
value is well defined. It is obviously a non-negative definite matrix
because of the form $\vec{x}\vec{x}^{\tp}$.

Together with dominated convergence and the fact that $L_{1}$-convergence
implies convergence in probability, we have the conclusion where $\Sigma=\mathbb{E}\left[\left(\begin{array}{c}
\vec{Y}(\bar{\vec{\theta}})\\
Z(\bar{\vec{\theta}})
\end{array}\right)\left(\begin{array}{cc}
\vec{Y}(\bar{\vec{\theta}}) & Z(\bar{\vec{\theta}})\end{array}\right).\right]$
\end{proof}

\subsection{Continuous-Time version proof}

The ODE associated with the continuous-time algorithm can be arrived
by similar technique as the discrete-time case.

\begin{eqnarray*}
\dot{\vec{\mu}}(t) & = & -\frac{1}{T(t)}\left(\vec{\mu}(t)^{\tp}Q^{-1}-(\vec{\mu}(t)^{\tp}Q^{-1}\vec{1})\vec{\mu}(t)^{\tp}\right)\\
\dot{T}(t) & = & -\vec{\mu}(t)^{\tp}Q^{-1}\vec{1}-T(t).
\end{eqnarray*}
Note that instead of $(I-Q)^{-1}$ appearing we now have $-Q^{-1}$
where $Q$ is a transition rate matrix. The Perron-Frobenius theorem
still applies to matrix of the form $M-D$ where $M$ consists of
off-diagonal non-negative entries and $D$ is a non-positive diagonal
matrix. In the discrete-time proof, we often used the fact that $\exp(I-Q)^{-1}$
is a matrix of non-negative entries. We need to now show that $\exp(-Q)^{-1}$
is also a matrix of non-negative entries.
\begin{lem}
\label{lem:nonnegative-entries}Given a transition rate matrix $Q$,
$\exp(-Q)^{-1}$ is a matrix of non-negative entries\end{lem}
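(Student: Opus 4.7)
The plan is to reduce the claim to two elementary facts: (i) the matrix $-Q^{-1}$ itself has non-negative entries, and (ii) the matrix exponential of any entrywise non-negative matrix is entrywise non-negative. Fact (ii) is immediate from the power series $\exp(M)=\sum_{k\ge 0} M^k/k!$, since products of entrywise non-negative matrices are entrywise non-negative and the series converges termwise. So the real work is fact (i).

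For fact (i), I would use the probabilistic interpretation of $Q$ as the sub-generator of the CTMC restricted to the transient class. Because the full absorbing chain is irreducible on a finite state space and the absorbing state is accessible from every transient state, every eigenvalue of $Q$ has strictly negative real part (this is the Perron--Frobenius statement for sub-generators already invoked just above the lemma, via $Q=A-bI$ with $A$ irreducible non-negative). In particular $Q$ is invertible, and $e^{Qt}\to 0$ as $t\to\infty$. Moreover, by Kolmogorov's forward equation,
\[
(e^{Qt})_{ij}=\mathbb{P}_i(X_t=j,\ \tau>t)\ge 0.
\]
Integrating the identity $\frac{d}{dt}e^{Qt}=Qe^{Qt}$ from $0$ to $\infty$ then gives
\[
-I=Q\int_0^\infty e^{Qt}\,dt,\qquad\text{i.e.}\qquad -Q^{-1}=\int_0^\infty e^{Qt}\,dt,
\]
so each entry $(-Q^{-1})_{ij}$ is the expected total sojourn time at $j$ before absorption for the chain started at $i$, which is non-negative (and in fact strictly positive by irreducibility).

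Combining the two facts, write $M:=-Q^{-1}$; by (i), $M$ has non-negative entries, and then by (ii), $\exp(M)=\exp(-Q^{-1})$ does as well.

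The only real subtlety is making sure $Q$ is invertible and that the integral representation of $-Q^{-1}$ is valid; both are immediate consequences of the spectral bound on $Q$, which is already established in the paper via the Perron--Frobenius argument for the rate matrix decomposition $Q=A-bI$. Aside from that, every step is a one-line verification, so I do not anticipate any genuine obstacle.
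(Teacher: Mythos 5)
Your proof is correct and reaches the same two-step structure the paper uses (show $-Q^{-1}\geq 0$ entrywise, then apply the power series for $\exp$), but you prove the key fact (i) by a different device. The paper uniformizes: with $\alpha=\max_i(-Q_{ii})$, it writes $-Q^{-1}=\tfrac{1}{\alpha}\sum_{k\geq 0}\left(I+\tfrac{1}{\alpha}Q\right)^{k}$ and observes that $I+\tfrac{1}{\alpha}Q$ is an entrywise non-negative (sub-stochastic, irreducible) matrix, so the Neumann series is non-negative. You instead use the continuous-time resolvent identity $-Q^{-1}=\int_0^\infty e^{Qt}\,dt$ together with the probabilistic fact $(e^{Qt})_{ij}=\mathbb{P}_i(X_t=j,\tau>t)\ge 0$. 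These are, of course, the discrete-time and continuous-time faces of the same coin: the paper's series is the Green's function computed via the uniformized jump chain, yours is the same Green's function computed as expected occupation time. Your version is somewhat more self-contained probabilistically and makes the strict positivity (by irreducibility) transparent; the paper's version is shorter and purely algebraic, sidestepping the need to justify $e^{Qt}\to 0$ and the interchange of $Q$ with the improper integral. Both are complete once you append, as you do, the trivial observation that $\exp$ of a non-negative matrix is non-negative.
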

\begin{proof}
Let $\alpha$ be the maximum of the diagonal element of $-Q$. Notice
that $-Q^{-1}=\alpha\left[I-\left(\frac{1}{\alpha}Q+I\right)\right]^{-1}=\alpha\sum_{k\geq0}(\frac{1}{\alpha}Q+I)^{k}$
for $\alpha$ large enough. $\frac{1}{\alpha}Q+I$ is a non-negative
irreducible matrix hence the infinite sum is also a positive matrix.\end{proof}
\begin{lem}
The principal eigenvalue of $Q$ is real and smaller than $0$, and
also the left and right principal eigenspaces are one-dimensional.\end{lem}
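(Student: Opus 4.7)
The plan is to reduce to the standard Perron--Frobenius setting by a uniform diagonal shift. Set $\alpha = \max_i(-q_{ii})$ and $A = Q + \alpha I$. Then $A$ has non-negative entries because adding $\alpha$ to each diagonal kills all the negative diagonal entries of $Q$, while leaving the already non-negative off-diagonal entries alone. Moreover, the irreducibility of the absorbing Markov chain restricted to the transient states transfers to $A$ (the sparsity pattern of $A$ off the diagonal matches that of $Q$, which in turn encodes the directed graph of transitions between transient states). Hence classical Perron--Frobenius \cite{karlin_taylor} applies to $A$.

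From Perron--Frobenius one immediately obtains that $A$ has a real simple principal eigenvalue $\beta>0$, of algebraic and geometric multiplicity one, with strictly positive left and right eigenvectors $\vec{\pi}$ and $\vec{v}$, and that every other eigenvalue of $A$ has real part strictly less than $\beta$. Since $Q = A - \alpha I$ has exactly the same eigenvectors as $A$, with eigenvalues shifted by $-\alpha$, the principal eigenvalue of $Q$ (the one of largest real part) is the real number $\lambda = \beta - \alpha$, it is simple, and its left/right eigenspaces are the one-dimensional spans of $\vec{\pi}$ and $\vec{v}$ respectively.

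It remains to show $\lambda<0$. For this I use the fact that $Q$ comes from an absorbing Markov chain in which, by irreducibility, the absorbing state is accessible from every transient state. This means $Q\vec{1}\le\vec{0}$ componentwise, with at least one strictly negative component. Applying the left Perron eigenvector of $A$ to the relation $\vec{\pi}^\tp Q = \lambda\, \vec{\pi}^\tp$ and dotting with $\vec{1}$ gives
\[
\lambda\,(\vec{\pi}^\tp\vec{1}) \;=\; \vec{\pi}^\tp Q\vec{1}\;<\;0,
\]
where strict inequality uses that $\vec{\pi}$ has all strictly positive entries (Perron) and that at least one entry of $Q\vec{1}$ is strictly negative. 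Since $\vec{\pi}^\tp\vec{1}>0$, we conclude $\lambda<0$.

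The only step that requires care is the negativity of $\lambda$: one must be sure that in the original CTMC at least one transient row of $Q$ has a strictly negative sum, which is exactly the statement that the absorbing state is reachable in one jump from some transient state; this is where the irreducibility assumption on the full chain (with its absorbing state) is used. Everything else is a bookkeeping exercise around the shift $A = Q + \alpha I$ and the standard Perron--Frobenius theorem.
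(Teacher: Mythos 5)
Your proposal is correct and takes essentially the same route as the paper: shift $Q$ by a scalar multiple of the identity to get an irreducible non-negative matrix, invoke Perron--Frobenius for simplicity and positivity of the principal eigenpair, and translate back to $Q$. Where you improve on the paper's exposition is the negativity of $\lambda$. The paper dispatches this with the remark that the ``spectral radius is bounded above by $\parallel Q\parallel_{\infty}\leq0$,'' which as written is not literally true ($\|Q\|_\infty$ is non-negative); read charitably as the logarithmic norm $\mu_\infty(Q)=\max_i\sum_j q_{ij}\leq 0$, it yields only $\lambda\leq 0$ and leaves the strictness unaddressed. Your argument closes that gap cleanly: pairing the strictly positive left Perron vector $\vec{\pi}$ against $Q\vec{1}$, which is componentwise $\leq\vec{0}$ with at least one strictly negative entry because some transient state jumps directly to the absorbing state, forces $\lambda\,\vec{\pi}^{\tp}\vec{1}<0$ and hence $\lambda<0$. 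So your version is the same approach carried out more carefully, in particular filling in the step the paper glosses over.
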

\begin{proof}
The spectral radius is bounded above by $\parallel Q\parallel_{\infty}\leq0$.
$Q+cI$ can be made into a irreducible non-negative matrix for some
large $c$. All the properties follow after applying Perron-Frobenius
to that.
\end{proof}
Analogous to Lemma \ref{lem:-Tn-boundedness}, we need to show that
$T_{n}$ is bounded above and below almost surely. 
\begin{lem}
The $T_{n}$ sequence in the continuous-time case is bounded above
and below almost surely by finite random variables.\end{lem}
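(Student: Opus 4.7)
The plan is to prove the two bounds separately. For the upper bound, I would mirror the discrete-time argument in Lemma \ref{lem:-Tn-boundedness} essentially verbatim. Writing $T_n = (T_0 + \sum_{k=1}^n \tau^{(k)})/(n+1)$ from the recursion, it suffices to show $\limsup_n \frac{1}{n+1}\sum_{k=1}^n \tau^{(k)} < \infty$ almost surely. Define $\tilde{\tau}(x)$ to be the absorption time starting from state $x \in S \setminus \{0\}$, and set $\tilde{\tau} = \max_{x} \tilde{\tau}(x)$. Because the CTMC is irreducible on the finite transient class and has a single absorbing state, $\tilde{\tau}(x)$ has a finite expectation (in fact an exponential tail, since $-Q^{-1}\vec{1}$ is finite entrywise by Lemma \ref{lem:nonnegative-entries}). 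Now $\tau^{(k)}$ conditioned on $\mathscr{F}_{k-1}$ is stochastically dominated by $\tilde{\tau}$, so by the standard monotone coupling we may construct i.i.d.\ $\tilde{\tau}^{(k)}$ on an enlarged probability space with $\tau^{(k)} \le \tilde{\tau}^{(k)}$ a.s. The classical SLLN then gives $\limsup_n T_n \le \mathbb{E}[\tilde{\tau}] < \infty$ almost surely.

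For the lower bound, the discrete-time trick $\tau \ge 1$ fails, so I would use an analogous stochastic \emph{minorization}. Let $\nu = \max_i (-q_{ii})$ be the uniformization rate. Starting from any transient state $x$, the absorption time $\tau(x)$ is bounded below by the holding time at $x$, which is exponential with rate $-q_{xx} \le \nu$. Therefore $\tau(x) \succeq_{\mathrm{st}} E_\nu$, where $E_\nu$ denotes an exponential random variable of rate $\nu$. Since this bound is uniform in the starting state, it passes through to any random initial distribution, hence $\tau^{(k)}$ conditioned on $\mathscr{F}_{k-1}$ stochastically dominates $E_\nu$. Using the monotone coupling construction again, I would build i.i.d.\ $\tilde{E}_k \sim \mathrm{Exp}(\nu)$ with $\tau^{(k)} \ge \tilde{E}_k$ almost surely. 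The SLLN then yields
\[
\liminf_{n} T_n \;\ge\; \lim_{n} \frac{1}{n+1}\sum_{k=1}^{n} \tilde{E}_k \;=\; \frac{1}{\nu} \;>\; 0 \quad \text{a.s.}
\]

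The main obstacle is the rigorous justification of the two stochastic couplings, since $\{\tau^{(k)}\}$ is not i.i.d.\ (each depends on $\vec{\mu}_{k-1}$, which itself depends on the entire past). The standard remedy is a one-step conditional coupling: given $\mathscr{F}_{k-1}$, draw $\tau^{(k)}$ from its true conditional law and then draw $\tilde{\tau}^{(k)}$ (resp.\ $\tilde{E}_k$) using an independent uniform so that the monotone coupling holds pointwise and the auxiliary variables are unconditionally i.i.d. This is a routine application of the Strassen/Skorokhod representation theorem but is the one technical point worth spelling out; once it is in place, the rest of the argument is just the SLLN applied to two genuinely i.i.d.\ sequences. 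The existence of exponential moments for $\tilde{\tau}$ (needed only for finite mean here, but worth remarking) follows from Lemma \ref{lem:nonnegative-entries} together with the spectral bound $\mathbb{P}(\tilde{\tau}(x) > t) \le \vec{e}_x^{\tp} e^{tQ} \vec{1}$ and the fact that $Q$ has spectral abscissa strictly less than $0$.
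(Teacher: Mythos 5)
Your proposal is correct and matches the paper's approach: the upper bound is inherited verbatim from the discrete-time Lemma, and the lower bound replaces the now-unavailable $\tau\geq 1$ with a stochastic minorization of $\tau^{(k)}$ by an exponential $\mathrm{Exp}(\nu)$ with $\nu=\max_i(-q_{ii})$, i.e.\ the fastest exit rate, which is exactly the coupling the paper invokes (the paper's phrase ``slowest holding rate'' is loose wording for the state held for the shortest time). Your explicit discussion of the one-step conditional (quantile/Strassen) coupling -- drawing a single fresh uniform $U_k$ independent of $\mathscr{F}_{k-1}$ and using it simultaneously for $\tau^{(k)}$ and the auxiliary i.i.d.\ variable so that the pointwise inequality and the i.i.d.\ property both hold -- is a genuine sharpening: the paper's Lemma~\ref{lem:-Tn-boundedness} asserts the coupling of $\{\tau^{(k)}\}$ (which is not i.i.d.) to an i.i.d.\ sequence but does not spell out why the coupled sequence remains i.i.d., and your remark closes that small gap. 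The rest (SLLN applied to the i.i.d.\ bounding sequences, finiteness of $\mathbb{E}[\tilde{\tau}]$ via the exponential tail of the absorption time) is exactly what the paper intends.
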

\begin{proof}
The upper bound is identical to the discrete-time case (Lemma \ref{lem:-Tn-boundedness}).
For the lower bound, we consider a random variable $\tilde{s}$ that
is an exponential rate corresponding to the slowest holding rate of
the Markov chain. This way, $\tilde{s}^{(k)}$ can be coupled to the
first holding time of the Markov chain during the k-th iteration.
This way, $\tau^{(k)}\geq_{a.s.}\tilde{s}^{(k)}$. Hence $\lim_{n}T_{n}\geq\mathbb{E}(\tilde{s})$
implying that its bounded below by a finite random variable.
\end{proof}

\section*{Figures}

\begin{figure}[H]
\centerline{\includegraphics[width=7in]{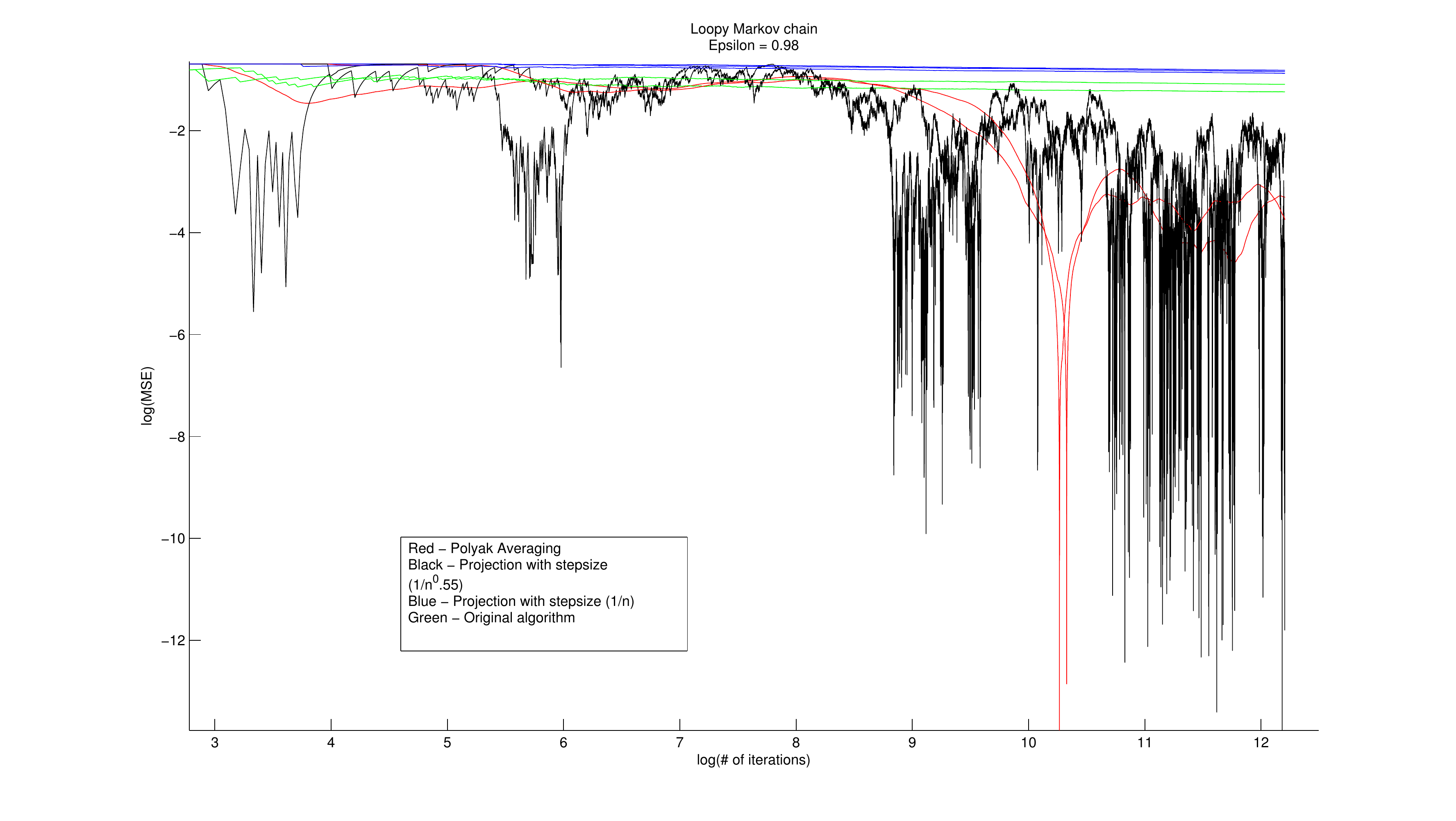}}

\caption{\label{fig:loopyMC}This figure is the time vs. error plot of the
main algorithm ran on a loopy Markov chain with eigenvalues well outside
the CLT regime ($\epsilon=0.98>0.5$). The plot is a log/log plot
where the y-axis is the Mean-Squared-Error. It is clear that the Polyak-Ruppert
Averaging (the red line) converges much faster than the original algorithm
(green line)}

\end{figure}
 
\begin{figure}[H]
\includegraphics[width=0.7\paperwidth]{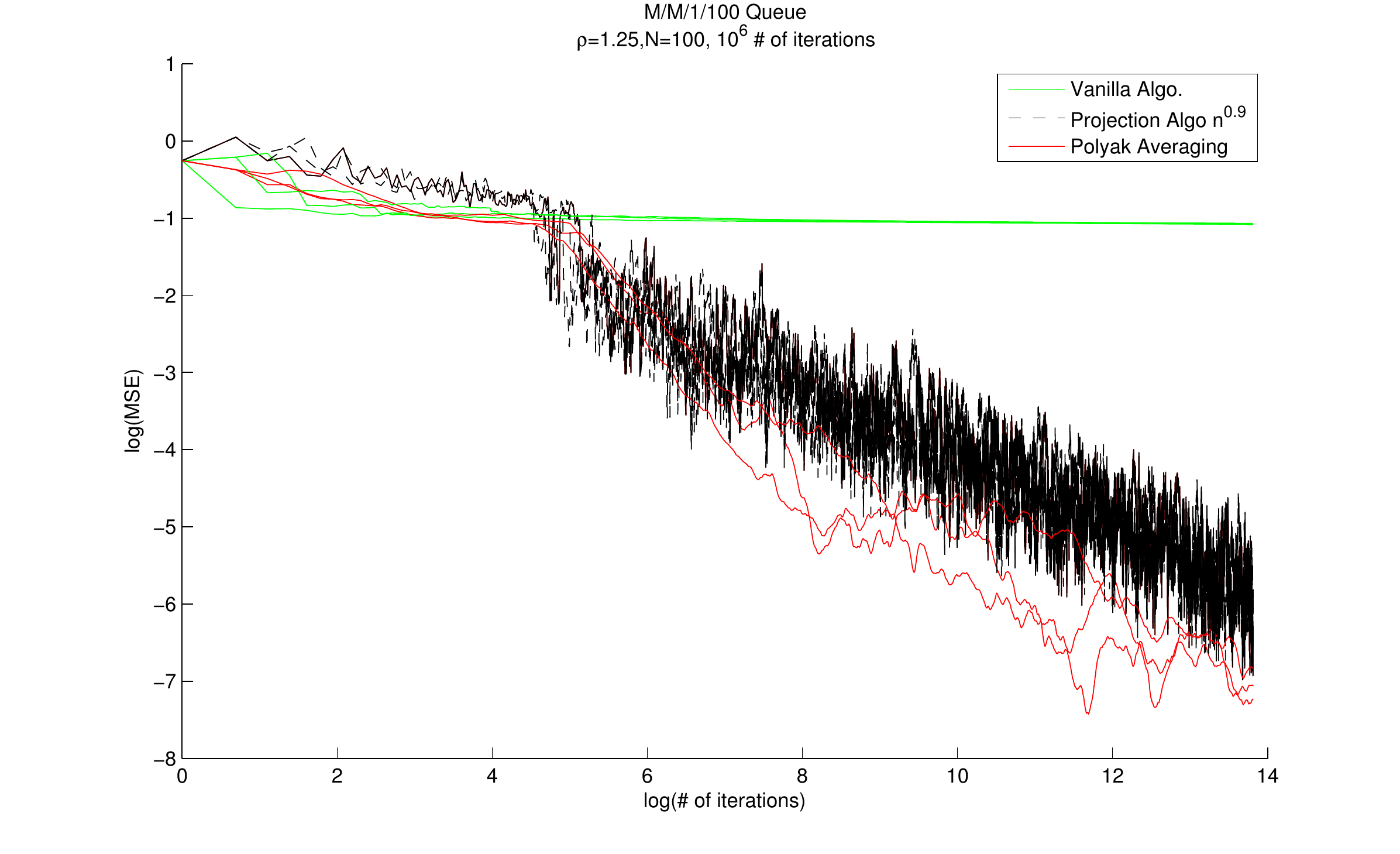}\caption{\label{fig:MM1}This is the simulation of a M/M/1 queue with 100 queue
capacity and $\rho=1.25$. We are considering the embedded discrete-time
chain at the jump times of the system. We had to Doeblinize the process
(multiply transition matrix by 0.95) in order to deal with the large
$E[\tau]$ due to the system being in heavy-traffic regime. As you
can see, the Polyak-Ruppert averaging (red) is significantly better
than the original algorithm (green) on the log-log plot. The eigenvalue
condition for the CLT is not satisfied after Doeblinization.}

\end{figure}

\begin{figure}[H]
\centerline{\includegraphics[width=1\paperwidth]{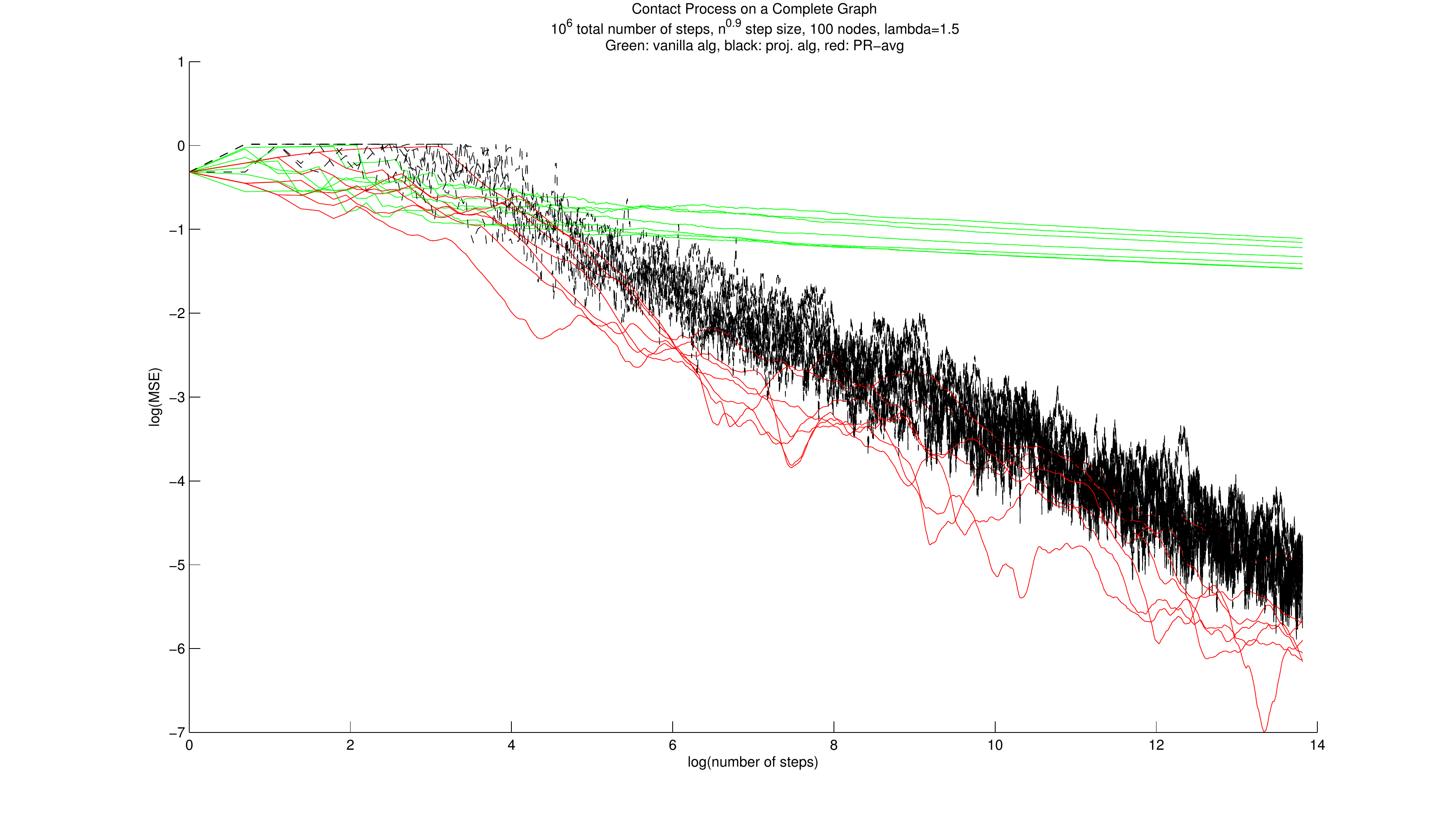}}\caption{\label{fig:cpcg}This is a simulation of the contact process on a
complete graph where $\lambda=1.5$ with 100 nodes. The plot is the
log-log plot of the number of steps vs. MSE. The sufficient condition
for CLT cannot be met in this case after subtracting $0.5I$ from
the rate matrix. The Polyak's averaging algorithm (red) significantly
outperforms the vanilla algorithm (green).}
\end{figure}

\pagebreak{}

\bibliographystyle{plain}
\bibliography{stochasticapproximations}

\end{document}